\newtheorem{thm}[equation]{Theorem}
\newtheorem{prop}[equation]{Proposition}
\newtheorem{lem}[equation]{Lemma}
\newtheorem{cor}[equation]{Corollary}
\theoremstyle{definition}
\theoremstyle{remark}
\newtheorem{rem}[equation]{Remark}
\newtheorem{rems}[equation]{Remarks}
\newtheorem{ex}[equation]{Example}
\newcounter{counterprop}
\newtheorem*{property*}{Property}
\numberwithin{equation}{section}
\newcommand{\Id}{\mathrm{Id}}
\newcommand{\End}{\mathrm{End}}
\newcommand{\Ker}{\mathrm{Ker}}
\newcommand{\Ima}{\mathrm{Im}}
\newcommand{\cl}{\mathrm{cl}}
\newcommand{\homo}{\mathrm{hom}}
\newcommand{\Spec}{\mathrm{Spec}}
\newcommand{\Hdg}{\mathrm{Hdg}}
\newcommand{\Hod}{\mathrm{Hod}}
\newcommand{\Gal}{\mathrm{Gal}}
\newcommand{\Sp}{\mathrm{Sp}}
\newcommand{\Res}{\mathrm{Res}}
\def\alg{\mathrm{alg}}
\newcommand{\isoto}{\myxrightarrow{\,\sim\,}}
\def\myrightarrow{{\setbox\z@\hbox{$\rightarrow$}\dimen0\ht\z@\multiply\dimen0 6\divide\dimen0 10\ht\z@\dimen0\box\z@}}
\def\myrightarrowfill@{\arrowfill@\relbar\relbar\myrightarrow}
\newcommand{\myxrightarrow}[2][]{\ext@arrow 0359\myrightarrowfill@{#1}{#2}}
\newcommand{\extp}{\@ifnextchar^\@extp{\@extp^{\,}}}
\def\@extp^#1{\mathop{\bigwedge\nolimits^{\!#1}}}
\newcommand{\CH}{\mathrm{CH}}
\def\cM{\mathcal{M}}
\def\cL{\mathcal{L}}
\def\cO{\mathcal{O}}
\def\cF{\mathcal{F}}
\def\cU{\mathcal{U}}
\def\cV{\mathcal{V}}
\def\cI{\mathcal{I}}
\def\ci{\mathcal{C}^{\infty}}
\def\kW{\mathfrak{W}}
\def\kZ{\mathfrak{Z}}
\def\uG{\underline{G}}
\def\uF{\underline{F}}
\def\ul{\underline{l}}
\def\oZ{\overline{Z}}
\def\ow{\overline{w}}
\def\oD{\overline{D}}
\def\wkW{\widetilde{\mathfrak{W}}}
\def\wW{\widetilde{W}}
\def\wf{\widetilde{f}}
\def\D{\mathbb D}
\def\Z{\mathbb Z}
\def\C{\mathbb C}
\def\Q{\mathbb Q}
\def\P{\mathbb P}
\def\R{\mathbb R}
\begin{document}

\title[On the subvarieties with nonsingular real loci]{On the subvarieties with nonsingular real loci of a real algebraic variety}

\author{Olivier Benoist}
\address{D\'epartement de math\'ematiques et applications, \'Ecole normale sup\'erieure, CNRS,
45 rue d'Ulm, 75230 Paris Cedex 05, France}
\email{olivier.benoist@ens.fr}

\renewcommand{\abstractname}{Abstract}
\begin{abstract}
Let $X$ be a smooth projective real algebraic variety.
We give new positive and negative results on the problem of approximating a submanifold of the real locus of 
$X$ by real loci of subvarieties of $X$, as well as on the problem of determining the subgroups of the Chow groups of $X$ generated by subvarieties with nonsingular real loci, or with empty real loci. 
\end{abstract}
\maketitle

\section*{Introduction}
\label{intro}

In this article, we study the subvarieties with nonsingular real loci of a smooth projective real algebraic variety $X$. We consider their classes in the Chow groups of~$X$ and whether their real loci can approximate a fixed $\ci$ submanifold of $X(\R)$.

Let $c$ and $d$ denote the codimension and the dimension of these subvarieties.
The guiding principle of our results is that, for each of the three problems that we will consider in \S\S\ref{Chow}-\ref{approximation}, subvarieties with nonsingular real loci are abundant when $d<c$ (see Theorems~\ref{Chow1}, \ref{Chow2} and \ref{thC1}), but may be scarce for $d\geq c$ (see Theorems \ref{Chow3}, \ref{Chow4} and~\ref{thC2}). The geometric rationale behind this principle, in the spirit of Whitney's theorem in  differential geometry \cite{Whitneydiff}, is that a $d$\nobreakdash-dimensional variety mapped generically to $X$ is expected not to self-intersect, hence to have nonsingular image in $X$, only if~$d< c$.

\subsection{Chow groups}
\label{Chow}

It is an old question, going back to Borel and Haefliger \cite[\S 5.17]{BH},
%also [Hartshorne, Varieties of small codimension, Question 1.2]
to decide when the Chow group $\CH_d(X)$ of a smooth projective variety $X$ of dimension $c+d$ over a field is generated by classes of smooth subvarieties of $X$. 
This is not true in general (a first counterexample appeared in \cite[Theorem~1]{HRT}, for $c=2$ and $d=7$). The main positive result, due to Hironaka \cite[Theorem~p.~50]{Hironakasmoothing}, gives an affirmative answer if $d<c$ and $d\leq 3$ (his arguments now work over any infinite perfect field, thanks to \cite{CP}). 
One may wonder whether Hironaka's theorem holds as soon as~$d<c$.

 %No topological obstructions for d<c as resolution of singularities and perturbation shows alg coho classes realized by complex-oriented submanifolds. HRT: obstructions in homotopy of MO(k), not MO (immersion pb, as opposed to embedding pb, would be stable homotopy of MO(k)). They also consider MU(k) cf their Theorem 3. 
 %Thus no direct analogy with Thom's \cite{Thom} for $\ci$ manifolds, who does have obstructions with integral coefficients in this range., cf Remarques p.56.
  %Le survey Maggesi-Vezzosi est à coefficients rationnels. They point out that Sasha's answer of https://mathoverflow.net/questions/60434/are-chow-groups-generated-by-local-complete-intersections is incorrect as degeneracy loci are not lci.
 %Note that there might be no cex with rational coefficients!
 %[Kleiman, Geometry on Grassmannians] does integrally d=c=2 and d=3, c=2, see below. First unknown integral case c=d=3? For c=2 d=4, see my paper with Debarre.

In real algebraic geometry, it is natural to consider, more generally, subvarieties that are smooth along their real loci. Our first theorem is a variant of Hironaka's result in this setting, valid for all values of $(c,d)$ such that $d<c$.

 \begin{thm}[Theorem \ref{Chowth}]
 \label{Chow1}
 Let $X$ be a smooth projective variety of dimension $c+d$ over $\R$. 
If $d<c$, then the group $\CH_d(X)$ is generated by classes of closed subvarieties of~$X$ that are smooth along their real loci.
  \end{thm}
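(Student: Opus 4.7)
By additivity of the Chow group, it suffices to prove that for each irreducible closed subvariety $Z\subset X$ of dimension $d$, the class $[Z]\in\CH_d(X)$ lies in the subgroup generated by classes of subvarieties smooth along their real loci. The plan is to adapt Hironaka's smoothing strategy in a real-algebraic setting, relaxing ``smooth'' to ``smooth along the real locus.'' This relaxation makes accessible the full range $d<c$, via Whitney's dimension principle: the expected dimension of real self-intersection of a generic $d$-dimensional cycle in $X$ is $2d-(c+d)=d-c<0$, so generic real perturbations create no real singularities.

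I would start by taking a resolution of singularities $\sigma\colon\wY\to Z$ over $\R$ (available by \cite{CP}), yielding a morphism $f=\iota\circ\sigma\colon\wY\to X$ from a smooth projective $d$-dimensional real variety $\wY$ with $f_*[\wY]=[Z]$. To generate deformations of $[Z]$ in $\CH_d(X)$, one passes to the product $\wY\times X$ and considers the graph $\Gamma_f$ (smooth of dimension $d$, codimension $c+d$). Through a very ample complete linear system on $\wY\times X$---for instance by linking $\Gamma_f$ with a general complete intersection containing it, or by an incidence construction using general sections---one constructs a real family of rationally equivalent $d$-cycles $\{\Gamma_t\}_{t\in T}$ in $\wY\times X$. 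Projecting via the second projection $\pi\colon\wY\times X\to X$ then gives a family $\{Z_t=\pi_*\Gamma_t\}_{t\in T}$ of $d$-cycles in $X$, each of class $[Z]\in\CH_d(X)$.

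The decisive step is a real Bertini argument showing that for a generic real parameter $t\in T(\R)$, the support of $Z_t$ is a subvariety of $X$ smooth along its real locus. Two applications of $d<c$ enter here: first, the dimension count $-c<0$ ensures that $\Gamma_f$ is disjoint from the residual cycle in a generic complete intersection, so the latter is smooth by Bertini outside $\Gamma_f$; second, the Whitney count $d-c<0$ for self-intersections in $X$ ensures that the projection $\pi|_{\Gamma_t}$ is a real-analytic embedding on real loci for generic $t$. The bad locus (where either condition fails) is a proper Zariski-closed subset of $T$, and density of $T(\R)$ in its complement produces the desired smoothing parameter.

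The main obstacle is to carry out the construction so that $[Z]$ is genuinely represented as a class of a subvariety smooth along its real locus---rather than merely as a difference $\pi_*[W]-[\pi(\Sigma')]$ in which $[Z]$ reappears on the right-hand side through the identity $\pi_*[W]=[Z]+[\pi(\Sigma')]$. Breaking this apparent circularity likely requires either an induction on a well-chosen complexity invariant of $Z$ (for instance the dimension of its real singular locus) that strictly decreases under the construction, or a more intricate geometric setup, such as blowing up $X$ along $Z$ and deforming in the projectivized normal bundle before pushing down. Formulating and verifying the required real Bertini theorem is the other technical heart of the argument.
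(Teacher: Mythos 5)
Your overall architecture coincides with the paper's: resolve the singularities of $Z$, embed the resolution $W$ into a product ($V=X\times W$ in the paper, $\wY\times X$ for you) via its graph, move it by linkage with respect to a very ample system, and show that a generic member of the resulting family projects to a subvariety of $X$ that is smooth along its real locus (Proposition~\ref{mainapprox}). However, two of the steps you describe do not work as stated. First, the cycles obtained by linking $\Gamma_f$ are \emph{not} rationally equivalent to $\Gamma_f$: if $\Gamma_f\sim W'$ via a complete intersection $Z_{\mathrm{ci}}$, then $[\Gamma_f]+[W']=[Z_{\mathrm{ci}}]$, so $\pi_*[W']=[Z_{\mathrm{ci}}]\cdot\pi-[Z]$ rather than $[Z]$. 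After a chain of links the discrepancy $[\Gamma_f]-[W_j]$ is an alternating sum of complete-intersection classes, and one must separately show that its pushforward to $X$ is generated by admissible classes; the paper does this by realizing $\pi_*((2\lambda)^r)$ as the class of a smooth subvariety with empty real locus (Proposition~\ref{propBertini} applied to perturbations of sums of squares). This bookkeeping is exactly what dissolves the ``circularity'' you worry about at the end, and it is absent from your plan.

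Second, the ``real Bertini'' step is not a one-shot genericity argument, because the linked variety is constrained: it always contains $W\cap W'$, so a single general link neither makes $\pi$ injective nor removes the singularities sitting over the real locus — the bad locus is not avoided by a generic choice in a single linear system. The paper's substitute is the iterated moduli-of-links construction: Lemma~\ref{disappear} shows that a suitable chain of further links can be chosen to miss any prescribed point where $W$ is a local complete intersection, Proposition~\ref{propinj} uses this to prove that the non-injectivity locus of $\pi$ strictly drops in dimension after roughly doubling the length of the chain (so about $2^{d+1}$ links are needed), and Lemmas~\ref{doublerien} and~\ref{Rlinklem} combine links of even degree twisted by sums of squares (so that the result agrees with the original near $V(\R)$) with a Euclidean openness argument in the moduli of links to control smoothness and immersivity along the real points. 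You correctly anticipate that ``an induction on a well-chosen complexity invariant'' is needed, but you neither identify the invariant (the dimension of the locus where $\pi$ fails to be injective, resp.\ immersive, along the real points) nor explain why it decreases; this is the technical heart of the proof and cannot be waved through as a Zariski-density statement about the parameter space.
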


 Our proof is based on the smoothing technique developed by Hironaka in \cite{Hironakasmoothing}. We need to refine it for two reasons: to control real loci, and to deal with the singularities that inevitably appear in the course of our proof if $d>3$. To do so, we rely on the theory of linkage, as developed by Peskine and Szpiro \cite{PS} and Huneke and Ulrich \cite{HUDCG}. Our argument works over an arbitrary real closed field.
 
\vspace{.5em}

Our second theorem shows that Theorem \ref{Chow1} is optimal, for infinitely many values of~$c$.
We let $\alpha(m)$ denote the number of ones in the dyadic expansion of $m$.
  
 \begin{thm}[Theorem \ref{Chowth3}]
 \label{Chow3}
If $d\geq c$ are such that $\alpha(c+1)\geq 3$, there exists an abelian variety $X$ of dimension $c+d$ over $\R$ such that $\CH_d(X)$ is not generated by classes of closed subvarieties of $X$ that are smooth along their real loci.
 \end{thm}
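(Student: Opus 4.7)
The strategy is, first, to produce a cohomological obstruction to a class in $\CH_d(X)$ being a $\Z$-linear combination of classes of subvarieties smooth along their real loci, and second, to realise this obstruction on a suitable abelian variety using the hypothesis $\alpha(c+1)\geq 3$. Let me write $\CH_d(X)^{\sm}$ for the subgroup we want to show is proper.

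\textbf{Step 1 (obstruction).} If $Z\subset X$ is a codimension-$c$ closed subvariety smooth along $Z(\R)$, then $Z(\R)$ is a closed $C^\infty$ submanifold of $X(\R)$, and its real normal bundle $N_{Z(\R)/X(\R)}$ is not arbitrary: it is the restriction to $Z(\R)$ of the complex normal bundle $N_{Z_\C/X_\C}$ of the complexification. Hence its total Stiefel--Whitney class is subject to universal relations inherited from Chern classes of a complex rank-$c$ bundle. I would encode this by factoring the Borel--Haefliger cycle class map through a cobordism-like target: using Thom's representability together with the extra ``complexifiable normal bundle'' datum, one gets a refined map
\[
\CH_d(X)\longrightarrow \MO_d(X(\R))\big/\langle \text{complex-form relations}\rangle,
\]
and $\CH_d(X)^{\sm}$ lies in a proper subgroup $G\subset H_d(X(\R);\F_2)$ cut out by the images of the universal Stiefel--Whitney numbers of rank-$c$ complex-form bundles.

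\textbf{Step 2 (arithmetic condition).} The hypothesis $\alpha(c+1)\geq 3$ is the precise input needed to detect the proper inclusion $G\subsetneq H_d(X(\R);\F_2)$ in the relevant degree. By Kummer's theorem, $\alpha(c+1)-1$ equals the $2$-adic valuation of an appropriate binomial coefficient built from $c+1$ (compare $\nu_2((c+1)!)=c+1-\alpha(c+1)$), and the hypothesis $\alpha\geq 3$ guarantees that a specific degree-$(c+d)$ universal monomial in $\F_2[w_1,\dots,w_c]$ on $BO(c)$ is \emph{not} in the subring generated by classes of complex-form bundles. This is the classical Milnor--Stong--Landweber style characteristic-number computation; it is the step that singles out the arithmetic hypothesis.

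\textbf{Step 3 (realisation on an abelian variety).} For $X$ I would take a product $E_1\times\cdots\times E_{c+d}$ of real elliptic curves with $E_i(\R)$ having two components, so that $H^*(X(\R);\F_2)$ is an exterior algebra on $2(c+d)$ explicit degree-one generators and the image of $\CH^c(X)\to H^c(X(\R);\F_2)$ contains all products of $c$ divisor classes. I would then produce an explicit Schur-type symmetric polynomial in these divisor classes that pairs non-trivially (via Poincar\'e duality on $X(\R)$) with the universal bad SW number from Step~2. By construction the resulting class in $\CH_d(X)$ is not in $\CH_d(X)^{\sm}$.

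\textbf{Main obstacle.} The difficult step is Step~1: Thom's theorem forces every $\F_2$-homology class of $X(\R)$ to be the fundamental class of \emph{some} smooth submanifold, so a purely topological obstruction cannot work. One has to exploit the algebraic nature of $Z$ in the form of the extra structure (complex form, or equivalently $\Gal(\C/\R)$-equivariance) on the normal bundle, and build the refined cobordism target carefully enough that the universal calculation of Step~2 with $\alpha(c+1)\geq 3$ really translates into a non-realisation statement for $\CH_d(X)^{\sm}$. Matching this topological obstruction with the algebro-geometric image of $\CH_d$, and ruling out cancellations when arbitrary $\Z$-linear combinations are allowed, is the delicate part.
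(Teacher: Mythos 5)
There is a genuine gap, and the approach itself is off target. Your obstruction is built entirely from mod~$2$ characteristic-number data attached to the real loci $Z(\R)\subset X(\R)$ and their normal bundles. The obstruction the paper actually uses is of a completely different nature: it is the \emph{integral} self-intersection number $\deg(\beta^2)\bmod 4$ for a class $\beta\in\CH^d(X)$ with $c=d$ (the general case reduces to $c=d$ by taking a product and slicing with Bertini). In the paper's example (Proposition~\ref{resab}, a Weil restriction $\Res_{\C/\R}(A)$ of a very general principally polarized abelian variety) the class $\beta$ even satisfies $\cl_{\R}(\beta)=0$, so no invariant factoring through $H_d(X(\R),\Z/2)$ or through Stiefel--Whitney numbers of real loci can detect it; moreover any such mod~$2$ invariant is blind to $2\CH_d(X)$, and you give no mechanism making your ``refined cobordism target'' well defined on Chow classes (a subvariety not smooth along its real locus has no cobordism class of real locus at all). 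You correctly identify that cancellation under $\Z$-linear combinations is the delicate point, but you propose no device to control it.

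The paper's device is the quadratic form: writing $\beta=\sum_i n_i[Y_i]$ and expanding $\deg(\beta^2)$, the cross terms $2\sum_{i<j}n_in_j\deg([Y_i]\cdot[Y_j])$ are divisible by $4$ because the $[Y_i]$ are $G$-invariant Hodge classes and Mattuck's theorem forces all such classes on the very general Weil restriction to pair evenly; the diagonal terms $\deg([Y_i]^2)$ are handled by Fulton's double point formula (Proposition~\ref{FL}~(ii)): since a resolution $g_i:W_i\to Y_i$ is an embedding near the real locus, its double locus has no real points, hence contributes a multiple of $4$, leaving $\deg([Y_i]^2)\equiv \deg(s_d(W_i))\pmod 4$ (Chern classes of the abelian variety vanish), and Rees--Thomas (Theorem~\ref{RT}) gives $4\mid s_d$ on $MU_{2d}$ exactly when $\alpha(d+1)\geq 3$. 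This is where the arithmetic hypothesis enters --- as a divisibility of a Chern number of the \emph{complex} points $W_i(\C)$, not as a computation in $H^*(BO(c),\F_2)$ as in your Step~2. Finally, your Step~3 choice of a product of elliptic curves cannot work even granting Steps~1--2: such an $X$ carries far too many algebraic (hence Hodge and $G$-invariant) classes, whereas the construction needs the group of invariant Hodge classes in middle degree to be as small as possible so that the cross-term estimate holds.
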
 
 Theorem \ref{Chow3} is entirely new. The hypothesis that $\alpha(c+1)\geq 3$ cannot be weakened to ${\alpha(c+1)\geq 2}$.
 Indeed, Kleiman has showed that the Chow group of codimension~$2$ cycles on a smooth projective fourfold or fivefold over an infinite field is generated by classes of smooth subvarieties (see \cite[Theorem 5.8]{KleimanGr}, 
%Kleiman also much more general has rational results.
where the hypothesis that the base field is algebraically closed may be discarded as a theory of Chow groups and Chern classes is now available in the required generality \cite{Fulton}). 

  Let us briefly explain the principle of the proof of Theorem \ref{Chow3} in the key case where $c=d$. Assume to simplify that $\beta\in\CH_d(X)$ is the class of closed subvariety $Y\subset X$ which is smooth along $Y(\R)$. Let $g:W\to X$ be a morphism obtained by resolving the singularities of~$Y$. The double locus of $g$, which is well-defined as a $0$-cycle on~$W$, has degree divisible by~$4$. Indeed, double points come two by two, and each such pair has a distinct complex conjugate. On the other hand, a double point formula due to Fulton \cite[\S 9.3]{Fulton} computes the degree of this double locus in terms of the Chern classes of $X$ and~$W$ and of the self-intersection of $Y$ in $X$. Divisibility results for Chern numbers due to Rees and Thomas \cite[Theorem 3]{RT} now give restrictions on $\beta$, which sometimes lead to a contradiction. 

 This strategy applies as well over $\C$, and yields new examples of smooth projective complex varieties whose Chow groups are not generated by  smooth subvarieties.
 
  \begin{thm}[Theorem \ref{Chowth5}]
  \label{Chow5}
If $d\geq c$ are such that $\alpha(c+1)\geq 3$, there exists a smooth projective variety~$X$ of dimension $c+d$ over $\C$ such that $\CH_d(X)$  is not generated by classes of smooth closed subvarieties of $X$.
\end{thm}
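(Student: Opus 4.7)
The plan is to execute, over $\C$, the strategy sketched for Theorem \ref{Chow3}. The real-geometric input --- that singularities of a subvariety $Y$ smooth along $Y(\R)$ occur in complex-conjugate pairs, yielding a factor of $2$ on top of the factor of $2$ coming from double points being counted two-by-two --- is discarded; one keeps only the latter factor. The remaining machinery, namely Fulton's double point formula and the Rees--Thomas divisibility theorem, applies in the complex setting unchanged.

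I focus on the key case $c=d$. The general case $d\geq c$ should follow by a product construction: one replaces $(X_0,\beta_0)$ by $(X_0\times\P^{d-c},\,p_1^*\beta_0)$, observing that an expression $p_1^*\beta_0=\sum_i n_i[Z_i]$ with smooth $Z_i$ would, by intersecting with a generic fibre of the second projection and pushing forward to $X_0$, produce a corresponding expression for $\beta_0$ in terms of (generically smooth) fibres. Now suppose $X$ is a smooth projective complex variety of dimension $2d$ and every $\beta\in\CH_d(X)$ is a $\Z$-linear combination $\sum_i n_i[Y_i]$ of classes of smooth $d$-dimensional subvarieties of $X$. For each $Y_i$, Fulton's double point formula \cite[\S 9.3]{Fulton} expresses $\deg([Y_i]^2)$ as a polynomial in Chern numbers of $Y_i$ twisted by pulled-back Chern classes of $X$; the Rees--Thomas divisibility theorem \cite[Theorem 3]{RT}, applicable under the hypothesis $\alpha(d+1)\geq 3$, constrains these Chern numbers to satisfy a non-trivial divisibility by an integer $K(d)$. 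Expanding
\[
\deg(\beta^2)=\sum_i n_i^2\deg([Y_i]^2)+2\sum_{i<j}n_in_j\deg([Y_i]\cdot[Y_j])
\]
then yields a divisibility constraint on $\deg(\beta^2)$ inherited from $K(d)$ and the parity of the cross-terms.

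It remains to exhibit a smooth projective complex variety $X$ of dimension $2d$ and a class $\beta\in\CH_d(X)$ whose self-intersection number $\deg(\beta^2)$ violates this constraint. As in the proof of Theorem \ref{Chow3}, a natural source is an abelian variety of dimension $2d$, whose Chow groups are rich enough --- via Pontryagin products and products of divisors --- to produce classes with prescribed self-intersection behaviour. The main obstacle I anticipate is the combinatorial bookkeeping: tracking the precise integer $K(d)$ coming out of Rees--Thomas, combining it with the parity cancellations from the cross-terms of $\deg(\beta^2)$, and then performing the self-intersection arithmetic on the chosen abelian variety. Because the complex-conjugation factor of $2$ present in Theorem \ref{Chow3} is absent here, the margin is tighter, and a more refined invariant than $\deg(\beta^2)$ may be required to detect the obstruction; but $\alpha(c+1)\geq 3$ is exactly the dyadic threshold at which the Rees--Thomas divisibility becomes non-trivial in codimension $c$, so the argument should close.
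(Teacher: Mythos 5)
Your overall strategy (reduce to $c=d$ by a product/Bertini argument, expand $\deg(\beta^2)$ as in \eqref{congChow}, bound the diagonal terms via Fulton and Rees--Thomas) matches the paper's, but there is a genuine gap at the one place where the complex case really differs from the real one: the cross-terms. The whole argument is a congruence modulo $4$, and over $\C$ the quantity $2\sum_{i<j}n_in_j\deg([Y_i]\cdot[Y_j])$ is a priori only divisible by $2$. Over $\R$ the extra factor of $2$ came from Galois-equivariance of the cycle classes; over $\C$ you must instead arrange that $\deg(\gamma\cdot\gamma')$ is even for \emph{all} Hodge classes $\gamma,\gamma'\in H^{2d}(X(\C),\Z)$, while simultaneously exhibiting an algebraic class $\beta$ with $\deg(\beta^2)\equiv 2\pmod 4$. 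These two requirements pull in opposite directions, and the abelian varieties you propose do not meet both. On a very general principally polarized abelian variety of dimension $2d$, the middle-degree Hodge classes are generated by $\lambda^d/d!$ (Mattuck), whose self-intersection $\binom{2d}{d}$ has $2$-adic valuation $\alpha(d)\geq 2$ under the hypothesis $\alpha(d+1)\geq 3$, so no class with $\deg(\beta^2)\equiv 2\pmod 4$ exists and there is nothing to contradict. On special abelian varieties (products of elliptic curves, Pontryagin products, etc.) such $\beta$ do exist, but then one also finds pairs of Hodge classes with odd intersection number, so the cross-terms are no longer divisible by $4$ and the congruence argument collapses. You flag this tension (``a more refined invariant \dots may be required'') but do not resolve it.

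The paper's resolution is Proposition \ref{quotab}: $X$ is the quotient of $A\times A'$ (two copies of a very general principally polarized abelian variety of dimension $d$) by the $\Z/4$-action $(x,x')\mapsto(x'+\tau,x)$ through a $2$-torsion point $\tau$. Pulling back along the degree-$4$ \'etale cover forces every Hodge class of $X$ into $2E$, where $E$ is the lattice of invariant Hodge classes upstairs, whence evenness of the pairing on Hodge classes; meanwhile the image of $A\times\{0\}$ gives $\beta$ with $\deg(\beta^2)=\tfrac{8}{4}=2$. This construction --- which relies on Hodge theory and produces a variety that is \emph{not} an abelian variety, consistently with the weaker statement of Theorem \ref{Chow5} compared to Theorem \ref{Chowth3} --- is the essential new input missing from your proposal. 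Two smaller points: since the $Y_i$ are smooth, only the self-intersection formula \cite[Corollary 6.3]{Fulton} is needed, not the double point formula; and Rees--Thomas controls only the top Segre number $s_d$, so you also need $c(X)$ to be rationally trivial in order to identify $\deg(c_d(N_{Y_i/X}))$ with $s_d(Y_i)$ --- this is automatic for an \'etale quotient of abelian varieties and is assertion (iii) of Proposition \ref{quotab}.
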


 This complements the counterexamples of \cite[Theorem~1]{HRT}, \cite[Th\'eo\-r\`eme~6]{Debarre} and \cite[Theorem 1.2]{BD} to the question of Borel and Haefliger. Theorem~\ref{Chow5} is closely related to \cite[Proposition 1]{RTac}, where the easier problem of showing that a cycle is not rationally equivalent to the class of a smooth subvariety (as opposed to a linear combination of classes of smooth subvarieties) is considered.
% In contrast with \cite{HRT,RTac}, our argument is not purely topological, since it relies partly on Hodge theory.

\subsection{The kernel of the Borel--Haefliger map}
\label{BH}
    
If $X$ is a smooth projective variety of dimension $c+d$ over $\R$, a related problem is to determine the subgroup of $\CH_d(X)$ generated by classes of subvarieties with no real points. 
This subgroup is included in the kernel $\CH_d(X)_{\R-\homo}$ of the Borel--Haefliger cycle class map ${\cl_{\R}:\CH_d(X)\to H_d(X(\R),\Z/2)}$ (see \cite{BH} or \cite[\S 1.6.2]{BW1}), which associates with the class of an integral closed subvariety $Y\subset X$ the homology class of its real locus. One may wonder when these two subgroups coincide. This question was known to have a positive answer for $c=1$ (Br\" ocker \cite{Brocker}, see also \hbox{\cite[\S 4]{Scheidererpurity}}), for $d=0$ (Colliot-Th\'el\`ene and Ischebeck \cite[Proposition~3.2~(ii)]{CTI}), and for $d=1$ and $c=2$ (Kucharz \cite[Theorem~1.2]{KucChow}). 

Combining our improvements of Hironaka's smoothing technique and a theorem of Ischebeck and Sch\"ulting according to which $\CH_d(X)_{\R-\homo}$ is generated by classes of integral closed subvarieties of $X$ whose real locus is not Zariski-dense \cite[Main Theorem 4.3]{IS}, we obtain an affirmative answer for all values of $(c,d)$ such that $d<c$.
  
   \begin{thm}[Theorem \ref{Chowth2}]
 \label{Chow2}
 Let $X$ be a smooth projective variety of dimension $c+d$ over $\R$. 
 If $d<c$, then $\CH_d(X)_{\R-\homo}$ is generated by classes of closed integral subvarieties of~$X$ with empty real loci.
  \end{thm}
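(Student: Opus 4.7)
The plan is to combine the Ischebeck–Sch\"ulting theorem with a refined version of the smoothing technique developed for Theorem~\ref{Chow1}. By the theorem of Ischebeck and Sch\"ulting cited in \S\ref{BH}, the group $\CH_d(X)_{\R-\homo}$ is generated by classes $[Y]$ of integral closed subvarieties $Y\subset X$ of dimension $d$ whose real loci $Y(\R)$ are not Zariski-dense in $Y$; equivalently, $Y(\R)$ has real dimension strictly less than $d$, and is contained in some proper closed subvariety $Z\subsetneq Y$. It therefore suffices to show that every such $[Y]$ is a $\Z$-linear combination of classes of integral closed subvarieties of $X$ with empty real loci.

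The bridge between non-Zariski-density and emptiness is the following rigidity observation: if $V\subset X$ is integral of dimension $d$ and smooth along $V(\R)$, then either $V(\R)=\emptyset$ or $V(\R)$ is Zariski-dense in $V$. Indeed, at any real point of $V$, smoothness forces $V(\R)$ to be locally a $\ci$ submanifold of $X(\R)$ of real dimension $d$, whereas any proper closed subvariety of $V$ has complex dimension $<d$ and hence real locus of real dimension $<d$. Consequently, any subvariety produced by the procedure of Theorem~\ref{Chow1} that is simultaneously smooth along its real locus and has non-Zariski-dense real locus automatically has empty real locus.

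The heart of the proof is then to refine the smoothing construction of Theorem~\ref{Chow1} so that, starting from $Y$ with $Y(\R)\subset Z\subsetneq Y$, the resulting expression $[Y]=\sum_j n_j[V_j]$ can be arranged with each $V_j$ having non-Zariski-dense real locus. The procedure of Theorem~\ref{Chow1} proceeds by iterated linkage inside complete intersections cut out by sections of very ample line bundles on auxiliary smooth subvarieties; the refinement is to choose these sections so that their real zero loci remain confined to a fixed closed subset of $X(\R)$ of real dimension $<d$, constructed from $Z(\R)$ together with the real loci of all intermediate linkage data. The hypothesis $d<c$ supplies the codimensional room needed for the transversality and genericity arguments, and the smallness of $Z(\R)$ inside the ambient $X(\R)$ (of real dimension $c+d$) leaves enough space to find the required real sections.

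The principal obstacle is bookkeeping: one must verify, at each step of the inductive linkage procedure---including the desingularisation steps that intervene when $d>3$---that every newly produced integral component of the running cycles has real locus contained in a proper closed subvariety of itself. The linkage-theoretic framework of Peskine--Szpiro and Huneke--Ulrich that already underlies Theorem~\ref{Chow1} is flexible enough to support this additional tracking, because the relevant residual subschemes can be controlled from the choice of the ambient complete intersection. Once this control is established, the rigidity observation closes the argument.
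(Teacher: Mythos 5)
Your first two ingredients match the paper: the reduction via Ischebeck--Sch\"ulting to classes of integral subvarieties $Z\subset X$ whose real loci are not Zariski-dense, and the rigidity observation that a variety which is smooth along a nonempty real locus has Zariski-dense real locus. But you apply the rigidity at the wrong end of the argument, and the substitute you propose for the missing control is a genuine gap. The paper applies rigidity \emph{before} any linkage: take a resolution of singularities $W\to Z$ which is an isomorphism over the real points; since $W$ is smooth and irreducible and $Z(\R)$ is not Zariski-dense, one gets $W(\R)=\varnothing$ outright. After that, no tracking of Zariski-density through the linkage is needed, because Proposition~\ref{mainapprox} already produces a subvariety $Y\subset X$ whose real locus is \emph{diffeomorphic} to $W(\R)$: the chains of links are arranged (Lemma~\ref{doublerien}) using auxiliary sections that are small deformations of $\sum_m u_m^2$, hence vanish nowhere on real points, so the double links do not move the variety near the real locus, and an Ehresmann argument identifies the real locus of the output with that of the input. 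Thus $Y(\R)=\varnothing$ automatically. Finally, the correction class $g_*[W]-[Y]$ is a multiple of $\pi_*((2\lambda)^r)$, represented by the image of a complete intersection of sections again deformed from sums of squares, hence with empty real locus.

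Your proposed alternative --- confining the real zero loci of all linking sections to a closed set of dimension $<d$ built from $Z(\R)$, and checking component by component that every intermediate residual subvariety has non-Zariski-dense real locus --- is not carried out, and it is exactly where the difficulty would lie. It sits in tension with the genericity the linkage machinery requires (regular sequences, Bertini-type transversality for the residual schemes, the genericity needed to make $\pi$ injective on the final link), and the assertion that the framework is ``flexible enough to support this additional tracking'' is precisely what would have to be proved. Note also that your endgame asks each output component to be simultaneously smooth along its real locus and to have non-Zariski-dense real locus; by your own rigidity observation these two conditions jointly force the real locus to be empty, so what you must really establish is emptiness of the real loci of the outputs --- and the mechanism that actually delivers this is the one above: emptiness for the resolved input, plus preservation of the real locus up to diffeomorphism along the chain of links.
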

  
  %Theorem \ref{Chow2} improves on \cite[Theorem 1.3]{KucChow} that dealt with Chow groups with rational coefficients, in which case no condition on the Borel--Haefliger cycle class is required. 
  Theorem \ref{Chow2} fails in general over non-archimedean real closed fields (see Remark~\ref{remrealclosed}). Kucharz has shown in \cite[Theorem 1.1]{KucChow} that the hypothesis $d<c$ of Theorem~\ref{Chow2} cannot be improved, for all even values of $c\geq 2$.
 %NB : Kucharz also works rationally.
  We extend this result to all the values of $c$ not of the form $2^k-1$.
  
  \begin{thm}[Theorem \ref{Chowth4}]
 \label{Chow4}
If $d\geq c$ are such that $\alpha(c+1)\geq 2$, there exists an abelian variety $X$ of dimension $c+d$ over $\R$ such that $\CH_d(X)_{\R-\homo}$ is not generated by classes of closed subvarieties of $X$ with empty real loci.
\end{thm}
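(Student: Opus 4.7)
The plan is to extend the strategy outlined in the introduction for Theorem~\ref{Chow3} to the setting of subvarieties with empty real loci. There, the degree of the double-point $0$-cycle on the resolution was divisible by $4$ because double points come in pairs and each pair has a distinct complex conjugate; here, with $Y(\R)=\emptyset$, the only structural input available is the absence of real points, so I obtain only a mod~$2$ obstruction, which is exactly what the weakened hypothesis $\alpha(c+1)\geq 2$ allows.

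I first treat the key case $d=c$. The goal is to exhibit, for every $c$ with $\alpha(c+1)\geq 2$, an abelian variety $X$ of dimension $2c$ over $\R$ together with a class $\beta\in\CH_c(X)_{\R-\homo}$ whose algebraic self-intersection $\deg_X(\beta^2)\in\Z$ is odd. The Borel--Haefliger triviality of $\beta$ is a topological condition on the image in $H_c(X(\R),\Z/2)$ that is largely independent of the algebraic intersection pairing, so the existence of such a pair should follow from an explicit product-of-elliptic-curves construction extending the one of Kucharz \cite[Theorem 1.1]{KucChow} for $c$ even to all $c$ with $\alpha(c+1)\geq 2$.

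Assuming such $(X,\beta)$ has been constructed, suppose for contradiction $\beta=\sum_i n_i[Y_i]$ with each $Y_i\subset X$ integral of dimension $c$ and $Y_i(\R)=\emptyset$. Choose resolutions of singularities $g_i\colon W_i\to Y_i\hookrightarrow X$ defined over $\R$; since $Y_i(\R)=\emptyset$, also $W_i(\R)=\emptyset$, so every closed point of $W_i$ has residue degree $2$ over $\R$ and every $0$-cycle on $W_i$ has even degree. Fulton's double point formula \cite[\S 9.3]{Fulton} produces a $0$-cycle $\D(g_i)\in\CH_0(W_i)$ with
\[
\deg \D(g_i)\,=\,\deg_X\bigl([Y_i]^2\bigr)\,-\,N(W_i),
\]
where $N(W_i):=\deg_{W_i}\bigl(c_c(g_i^*T_X-T_{W_i})\cap[W_i]\bigr)$ is, by the triviality of $T_X$ on the abelian variety $X$, the Chern number $\deg_{W_i}\bigl(s_c(T_{W_i})\cap[W_i]\bigr)$ of the stably complex manifold $W_i(\C)$. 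The Rees--Thomas divisibility theorem \cite[Theorem 3]{RT}, applicable under $\alpha(c+1)\geq 2$, yields $N(W_i)\in 2\Z$; combined with $\deg \D(g_i)\in 2\Z$ this gives $\deg_X([Y_i]^2)\in 2\Z$ for every $i$. Summing and using $n_i^2\equiv n_i \pmod 2$, we obtain
\[
\deg_X(\beta^2)\,\equiv\,\sum_i n_i\deg_X([Y_i]^2)\,\equiv\,0\pmod 2,
\]
contradicting the oddness of $\deg_X(\beta^2)$.

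For general $d\geq c$, I would take $X:=X_0\times A$ with $(X_0,\beta_0)$ as above and $A$ a $(d-c)$-dimensional abelian variety over $\R$ with $A(\R)$ Zariski-dense (e.g.\ a product of real elliptic curves with circular real locus), and set $\beta:=\pi^*\beta_0\in\CH_d(X)_{\R-\homo}$, where $\pi\colon X\to X_0$ is the projection; a decomposition $\beta=\sum_i n_i[Y_i]$ with $Y_i(\R)=\emptyset$, sliced by $X_0\times\{a\}$ for a general real point $a\in A(\R)$ and pushed forward along $\pi$, would recover a decomposition of $\beta_0$ into classes of subvarieties with empty real loci, contradicting the key case. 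The main obstacle, in my view, is the construction of $(X,\beta)$ itself: Kucharz's argument for $c$ even exploits specific properties of real tori, and its extension to all $c$ with $\alpha(c+1)\geq 2$ will require carefully chosen elliptic factors leveraging the constraint $c\neq 2^k-1$. A secondary technical point is to verify that the Chern polynomial arising from the double point formula lies within the range to which \cite[Theorem 3]{RT} directly provides mod~$2$ divisibility.
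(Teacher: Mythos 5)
Your reduction of the general case to $c=d$ and your use of Fulton's double point formula are in the right spirit, but the argument has a fatal gap at its foundation: the pair $(X,\beta)$ you need --- an abelian variety $X$ of dimension $2c$ and $\beta\in\CH_c(X)$ with $\cl_{\R}(\beta)=0$ and $\deg(\beta^2)$ \emph{odd} --- does not exist. Indeed, the Borel--Haefliger map is compatible with intersection products and with degrees modulo $2$ (a complex point contributes $2$ to the degree of a $0$-cycle and $0$ to its Borel--Haefliger class), so $\cl_{\R}(\beta)=0$ forces $\deg(\beta^2)\equiv\deg(\cl_{\R}(\beta)^2)\equiv 0\pmod 2$. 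Your own computation confirms this from the other side: the double point formula together with the evenness of $s_c$ on $MU_{2c}$ (which holds for \emph{all} $c\geq 1$, not just under $\alpha(c+1)\geq 2$) shows that $\deg([Y]^2)$ is even for \emph{every} integral $c$-dimensional subvariety of a $2c$-dimensional abelian variety, with or without real points, hence $\deg(\beta^2)$ is even for every $\beta$. A related warning sign is that your argument never genuinely uses the hypothesis $\alpha(c+1)\geq 2$; if it worked it would apply to $c=1$, contradicting Br\"ocker's theorem that the statement fails there.

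The obstruction must therefore be sought modulo $4$, and this is where the emptiness of the real loci enters twice, not once. The paper takes $X=\Res_{\C/\R}(A)$ for $A$ a very general principally polarized abelian variety and $\beta$ the class of the descent of $A\times\{0\}\cup\{0\}\times A'$, so that $\cl_{\R}(\beta)=0$ and $\deg(\beta^2)\equiv 2\pmod 4$. In the expansion of $\deg(\beta^2)$ the cross terms $2\sum_{i<j}n_in_j\deg([Y_i]\cdot[Y_j])$ are shown to be divisible by $4$ via Krasnov's equivariant cycle class map and a computation of the $G$-invariant Hodge classes of $X(\C)$ (this Hodge-theoretic input has no analogue in your proposal). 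For the diagonal terms, Proposition \ref{FL}~(ii) gives $\deg([Y_i]^2)\equiv s_d(W_i(\C))\pmod 4$ --- the double locus downstairs has no real points, so its contribution is divisible by $4$, not merely by $2$ --- and then the second, decisive use of $W_i(\R)=\varnothing$: by Conner--Floyd one has $\psi([W_i(\C)])=0\in MO_d$, so all Chern numbers of $[W_i(\C)]$ are even, and the new congruence result Theorem \ref{divtop} (which is exactly where $\alpha(d+1)\geq 2$ is used) upgrades the a priori divisibility $s_d(W_i(\C))\equiv 0\pmod 2$ to $s_d(W_i(\C))\equiv 0\pmod 4$. This yields $\deg(\beta^2)\equiv 0\pmod 4$, the desired contradiction. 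Without Theorem \ref{divtop} and the Conner--Floyd vanishing, Rees--Thomas alone only gives $s_d\equiv 0\pmod 4$ under $\alpha(d+1)\geq 3$, which is the stronger hypothesis of Theorem \ref{Chow3}.
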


  The proof of Theorem \ref{Chow4} follows the same path as that of Theorem \ref{Chow3}, using additionally a new result on congruences of Chern numbers (Theorem \ref{divtop}).
The hypothesis that $\alpha(c+1)\geq 2$ in Theorem \ref{Chow4} cannot be removed in general, in view of Br\"ocker's above-mentioned theorem \cite{Brocker} when $c=1$.

\subsection{Algebraic approximation} 
\label{approximation}

Let $X$ be a smooth projective variety of dimension $c+d$ over~$\R$, and fix a closed $d$-dimensional $\ci$ submanifold $j:M\hookrightarrow X(\R)$.
We now focus on the classical question whether $M$ can be approximated by real loci of algebraic subvarieties of $X$ (see \cite[Definition 12.4.10]{BCR} or \cite[\S 2.8]{AKbook}):

\begin{property*}[A]
For all neighbourhoods $\mathcal{U}\subset\ci(M,X(\R))$ of the inclusion, there exist ${\phi\in\cU}$ and a closed subvariety $Y\subset X$ which is smooth along $Y(\R)$ such that $\phi(M)=Y(\R)$.
\end{property*}

One must take into account the topological obstructions to the validity of $(A)$. The finest ones are based on cobordism theory and originate from \cite{BTvb} or \cite{AKIHES}. If~$T$ is a topological space, recall that two continuous maps $f_1:N_1\to T$ and ${f_2:N_2\to T}$, where the $N_i$ are $d$-dimensional compact~$\ci$ manifolds, are said to be \textit{cobordant} if there exist a compact $\ci$ manifold with boundary $C$, a diffeomorphism ${\partial C\simeq N_1\cup N_2}$, and a continuous map $f:C\to T$ such that $f|_{N_i}=f_i$ for $i\in\{1,2\}$. The group (for the disjoint union) of cobordism equivalence classes of such maps is the $d$-th \textit{unoriented cobordism group} $MO_d(T)$ of $T$.

Let $MO_d^{\alg}(X(\R))\subset MO_d(X(\R))$ be the subgroup generated by cobordism classes of continuous maps of the form $g(\R):W(\R)\to X(\R)$, where $g:W\to X$ is a morphism of smooth projective varieties over $\R$ and $W$ has dimension $d$.  The following property is a necessary condition for the validity of $(A)$.

\begin{property*}[B]
One has $[j:M\hookrightarrow X(\R)]\in MO_d^{\alg}(X(\R))$.
\end{property*}

We show that it is the only obstruction to the validity of $(A)$ for low values of~$d$.

\begin{thm}[Theorem \ref{approxth}]
\label{thC1}
Properties $(A)$ and $(B)$ are equivalent if $d<c$.
\end{thm}
 
 Theorem \ref{thC1} was already known when $d=1$, thanks to Bochnak and Kucharz \cite[Theorem~1.1]{BKsub} for $c=2$, and to Wittenberg and the author \cite[Theorem~6.8]{BW2} for any $c\geq 2$ (improving earlier results by Akbulut and King \cite{AKcourbes}). 
%These references consider property $(H)$ instead of $(C)$, but they are equivalent when~$d=1$ by \cite[Lemma~2.1]{AKIHES}. 
Theorem~\ref{thC1} is new for $d\geq 2$. 
 
  Our proof is based on a relative Nash--Tognoli theorem (see \cite[Proposition~4.1]{BTvb} or \cite[Proposition 0.2]{AKIHES}),
% or \cite[Theorem 2.8.4]{AKbook}), 
which solves the approximation problem up to unwanted singular points. To remove these singular points, we use the refinements of Hironaka's smoothing method already mentioned in \S\S\ref{Chow}-\ref{BH}. 
Hironaka's smoothing technique, as developed in \cite{Hironakasmoothing}, had already been applied in the context of real algebraic approximation in the proof of \cite[Theorem~6.8]{BW2}.
  
We also prove that Theorem \ref{thC1} is sharp: it may fail as soon as $d\geq c$, for infinitely many values of $c$.
Recall that $\alpha(m)$ is the number of ones in the dyadic expansion of $m\geq 0$.
 
 \begin{thm}[Theorem \ref{thji}]
 \label{thC2}
If $d\geq c$ are such that $\alpha(c+1)=2$,  there exist $X$ and $M$ such that $(A)$ fails but $(B)$ holds.
 \end{thm}
%peut-on espérer résultat positif avec l'obstruction "cobordant à immersion" ? Ou la calculer ? C'est le point de vue de AK dans Algebraicity of Immersions.
To the best of our knowledge, Theorem \ref{thC2} features the first examples demonstrating that properties~$(A)$ and $(B)$ are not equivalent in general. 
%On ne peut pas enlever le xR^2 dans [AK, Nash conjecture, Theorem D].
The hypothesis that $\alpha(c+1)=2$ cannot be entirely dispensed with, as $(A)$ and~$(B)$ are equivalent when $c=1$ (see Proposition~\ref{hyp}).
 The values of~$c$ for which Theorem \ref{thC2} applies are $c\in\{2,4,5,8,9,11,16,\dots\}$. 
 We have not been able to disprove the equivalence of $(A)$ and $(B)$ for other values of $c$, for instance for $c=3$. 
%The first case left open is $c=d=3$.
 %algebraic variant of Whitney's trick of eliminating double points? Tough!

 The proof of Theorem \ref{thC2} uses techniques similar to that of Theorems \ref{Chow3} and~\ref{Chow4}.
Although both properties $(A)$ and $(B)$ only involve real loci, the proof of Theorem~\ref{thC2} makes use in an essential way of global topological properties of sets of complex points, through their classes in the complex cobordism ring $MU_*$. 

In \cite[p.~269]{KvH}, Kucharz and van Hamel ask whether 
property $(A)$ always holds when $X=\P^n_{\R}$.
The obstructions used in the proof of Theorem~\ref{thC2} show that this question would have a negative answer if one replaced $\P^n_{\R}$ with other very similar varieties, such as some products of projective spaces (property~$(B)$ always holds for these varieties by \cite[Remark 3 p.~103]{BTvb}). This demonstrates the very particular role played by projective spaces in the question of Kucharz and van~Hamel.
 
 \begin{thm}[Theorem \ref{projth}]
 \label{thP}
For all $k\geq 1$, property $(A)$ fails in general for $c=d=2^k$ and $X=\P^1_{\R}\times\P^{2^{k+1}-1}_{\R}$.
 \end{thm}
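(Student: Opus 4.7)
The plan specializes the obstructions developed in the proof of Theorem \ref{thC2}(i) to the concrete variety $X = \P^1_\R \times \P^{2^{k+1}-1}_\R$: since $c = 2^k$ satisfies $\alpha(c+1) = 2$, we are in the regime of Theorems \ref{Chow3}, \ref{Chow4} and \ref{thC2} (with $e=1$) in which the relevant $2$-adic divisibility restrictions are nontrivial.

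We first choose a closed $\ci$ submanifold $M \hookrightarrow X(\R)$ of dimension $2^k$ whose Stiefel--Whitney numbers are tailored to defeat a $\pmod 4$ congruence that we derive below. Property $(C)$ is automatic for this $M$: since $X$ is a product of projective spaces, \cite[Lemma 2.1]{AKIHES} shows that every class in $MO_d(X(\R))$ is algebraic, and in particular $[j : M \hookrightarrow X(\R)] \in MO_d^{\alg}(X(\R))$.

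Suppose for contradiction that $(A_\R)$ holds for $M$. Then we obtain a closed subvariety $Y \subset X$ of dimension $2^k$, smooth along $Y(\R)$, together with a diffeomorphism $\phi : M \to Y(\R)$ arbitrarily close to the inclusion. Resolving the singularities of $Y$ (all complex, hence away from $Y(\R)$), we obtain a smooth projective variety $\tilde Y$ of dimension $2^k$ and a morphism $g : \tilde Y \to X$ that is an isomorphism over a neighborhood of $Y(\R)$. By Fulton's double point formula \cite[\S 9.3]{Fulton}, the virtual double point $0$-cycle $\D(g)$ on $\tilde Y$ has degree given by an explicit polynomial in the Chern classes of $X$, those of $\tilde Y$, and the self-intersection $[Y]\cdot [Y]$ in $X$. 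Because $g$ is an immersion on a neighborhood of $\tilde Y(\R)$, the points of $\D(g)$ lie in the non-real locus and fall into groups of four (ordered pairs of preimages, paired in turn by complex conjugation); hence $\deg\D(g) \equiv 0\pmod 4$.

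To close the argument, we combine this congruence with the Rees--Thomas divisibility theorems for Chern numbers of smooth projective varieties \cite[Theorem 3]{RT}, the explicit formula $c(X) = (1+h_1)^2(1+h_2)^{2^{k+1}}$, and the observation that modulo $2$ the Chern numbers of $\tilde Y$ are determined by the Stiefel--Whitney numbers of $\tilde Y(\R) \simeq M$. With $M$ chosen appropriately, $\deg\D(g) \bmod 4$ is forced to equal $2$, yielding the desired contradiction. The main obstacle is identifying a Stiefel--Whitney pattern on $M$ that breaks the congruence: the relevant combinatorics are precisely those used in the proof of Theorem \ref{thC2}(i), and the arithmetic input $\alpha(2^k+1)=2$ is what makes the Rees--Thomas bound sharp enough to leave a non-zero residue modulo $4$.
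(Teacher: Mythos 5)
Your overall architecture matches the paper's: pick $M$ representing a class $y_0\in MU_{2^{k+1}}$ with $s_{2^k}(y_0)\not\equiv 0\pmod 4$ (possible since $\alpha(2^k+1)=2$, via Theorem \ref{RT} and Theorem \ref{divtop}), note $(C)$ holds, and derive a mod-$4$ contradiction from Fulton's double point formula. But the sketch has a genuine gap at exactly the point where this theorem is harder than Lemma \ref{lemji}: for the tailor-made $X$ of Proposition \ref{constrHodge} one arranges $\deg([Y]^2)\equiv 0\pmod{2^{e+1}}$ and $c(X)\equiv 1\pmod{2^{e+1}}$ by construction, whereas for $X=\P^1_{\R}\times\P^{2^{k+1}-1}_{\R}$ neither holds: $\deg([Y]^2)$ can certainly be odd for a general class in $\CH^{2^k}(X)$ (e.g.\ $[Y]=H_1H_2^{2^k-1}+H_2^{2^k}$), and $c(X)=(1+H_1)^2(1+H_2)^{2^{k+1}}\equiv 1$ only mod $2$, not mod $4$. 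So the congruence $\deg(c_{2^k}(N_{W/X}))\equiv s_{2^k}(W)\pmod 4$ and the congruence $\deg([Y]^2)\equiv 0\pmod 4$ both require separate arguments, which you do not supply; saying you will "combine" the double point formula with $c(X)$ and Rees--Thomas is not enough.

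The missing idea, which drives both steps, is that $M$ must be embedded in a small ball of $X(\R)$. This forces $j_*[M]=0$, hence $\cl_{\R}([Y])=0$ in $H^{2^k}(X(\R),\Z/2)$; since the kernel of $\cl_{\R}$ on $\CH^{2^k}(X)=\Z H_2^{2^k}\oplus\Z H_1H_2^{2^k-1}$ is $2\CH^{2^k}(X)$, the class $[Y]$ is divisible by $2$ and $\deg([Y]^2)\equiv 0\pmod 4$. It also forces $j^*$ to vanish on $H^r(X(\R),\Z/2)$ for $r\geq 1$, which, combined with $c_r(X)=2\gamma_r$ for $r\geq 1$ and the Borel--Haefliger identity $\cl_{\R}(s(W))=\ow(W(\R))$, shows that each cross term $\deg(g^*c_r(X)\cdot s_{2^k-r}(W))$ with $r\geq 1$ is divisible by $4$, so that $\deg(c_{2^k}(N_{W/X}))\equiv s_{2^k}(W)\pmod 4$ after all. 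Without specifying this embedding (or some substitute), your argument breaks down: for a homologically nontrivial embedding of $M$ the two congruences above are simply false, and no contradiction is obtained. Once the small-ball embedding and these two computations are added, your proof coincides with the paper's.
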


\subsection{Structure of the article}
 
We study linkage to expand the scope of Hironaka's smoothing technique in \S\ref{linkage}, and use it in \S\ref{sectionapprox} to prove Theorems~\ref{Chow1}, \ref{Chow2} and~\ref{thC1}. Generalities about complex cobordism and an application to the divisibility of the top Segre class may be found in \S\ref{complexcobordism}. This result and a double point formula are combined in \S\ref{doublepoint} to prove Theorems \ref{Chow3}, \ref{Chow5}, \ref{Chow4}, \ref{thC2} and \ref{thP}. Finally, variants of properties $(A)$ and $(B)$ and their interactions are considered in \S\ref{final}.

\subsection{Acknowledgements}
 
I thank Michele Ancona and the referees for useful comments that improved the exposition of the article.

 \subsection{Notation and conventions}
 \label{notation}
 
 A variety over a field $k$ is a separated scheme of finite type over $k$. Smooth varieties over $k$ are understood to be equidimensional. If $f:X\to Y$ is a morphism of varieties over $k$ and $k'$ is a field extension of $k$, we let $f(k'):X(k')\to Y(k')$ be the map induced at the level of $k'$-points. We denote by $\R$ and $\C$ the fields of real and complex numbers. 
 
 All $\ci$ manifolds are assumed to be Hausdorff and second countable. We endow the set $\ci(M,N)$ of $\ci$ maps between two $\ci$ manifolds with the weak $\ci$ topology \cite[p.~36]{Hirsch}.

For $m\geq 0$, we let $\alpha(m)$ be the number of ones in the dyadic expansion of $m$.

 \section{Linkage}
 \label{linkage}
 
In the whole of \S\ref{linkage}, we fix an infinite field $k$, a smooth projective variety $V$ over~$k$, a very ample line bundle $\cO_V(1)$ on $V$, and a (possibly empty) Cohen--Macaulay closed subscheme $W\subset V$ of pure codimension $r$ in $V$. 

We study the subvarieties of $V$ that are linked to $W$ by complete intersections defined by sections of multiples of~$\cO_V(1)$~(\S\ref{link}), and their behaviour in families (\S\S\ref{family}-\ref{generic}), focusing in particular on their images by a morphism and on their real loci when $k$ is a real closed field (\S\S\ref{morphism}-\ref{realpar}).

If $\ul=(l_1,\dots,l_r)$ is an $r$-tuple of integers and if $\cF$ is a coherent sheaf on $V$, we set $\cF(\ul):=\bigoplus_{i=1}^r\cF(l_i)$. In particular, $H^0(V,\cF(\ul))=\bigoplus_{i=1}^r H^0(V,\cF(l_i))$.
 A statement depending on an $r$-tuple of integers $\ul=(l_1,\dots,l_r)$ is said to hold for $\ul\gg0$ if it holds for $l_r\gg\dots\gg l_2\gg l_1\gg 0$, i.e., if $l_1$ is big enough, if $l_2$ is big enough (depending on $l_1$), and so forth.

 \subsection{Linked subvarieties} 
 \label{link}
 
Let $\cI_W\subset\cO_V$ be the ideal sheaf of $W$ in $V$. Choose an $r$-tuple of integers $\underline{l}=(l_1,\dots,l_r)$ and a section $\uF\in H^0(V,\cI_W(\ul))$ such that $\underline{F}=(F_1,\dots,F_r)$ is a regular sequence (such $\uF$ always exists if $\cI_W(\ul)$ is generated by its global sections, for instance for $\ul\gg0$).
 
  Let ${Z:=\{F_1=\dots=F_r=0\}}\subset V$ be the complete intersection it defines, and let $\cI_Z=\langle\uF\rangle\subset\cO_V$ be its ideal sheaf. 
 Let $W'\subset V$ be the subvariety with ideal sheaf $\cI_{W'}:=(\cI_Z:\cI_W)\subset\cO_V$, where a local section $s\in \cO_V$ belongs to $(\cI_Z:\cI_W)$ if multiplication by $s$ induces a morphism $\cI_W\xrightarrow{s}\cI_Z$. 
  
 One has $Z=W\cup W'$ set-theoretically. It is a theorem of Peskine and Szpiro \cite[Proposition 1.3]{PS} that $W'\subset V$ is also Cohen--Macaulay of pure codimension~$r$, and that $\cI_{W}=(\cI_Z:\cI_{W'})\subset\cO_V$. In view of the symmetry of the relation between the subschemes $W$ and $W'$ of $V$, they are said to be \textit{linked} by the regular sequence~$\uF$. We write $W\sim W'$, or $W\sim_{\ul} W'$ if we want to emphasize that the regular sequence is a section of $\cO_V(\ul)$. We also say that $W\sim W'$ is the \textit{link} defined by $\uF$.
 
 \begin{rems}
(i) In the whole of \S\ref{linkage}, we could have only considered links with respect to complete intersections of multidegree $(l,\dots,l)$, with $l\gg 0$ when needed. The reason why we allow multidegrees $\ul=(l_1,\dots,l_r)$, requiring $\ul\gg 0$ when needed, is to be able to apply directly the proof of \cite[Lemma 5.1.1]{Hironakasmoothing} in the proof of Proposition \ref{Hironaka} below.
 
(ii)  In \S\S\ref{link}--\ref{generic}, we could allow $V$ to be any Gorenstein projective variety (see especially \cite[Proposition 1.3]{PS}).
\end{rems}
 
  \begin{lem}
  \label{choosesequence}
 Let $x\in V$, and let $g_1,\dots,g_r\in\cI_{W,x}\subset \cO_{V,x}$ be a regular sequence. Then, for an $r$-tuple of integers $\ul\gg 0$, there exists a regular sequence $\uF\in H^0(V,\cI_W(\ul))$ such that the ideals $\langle \uF\rangle$ and $\langle g_1,\dots,g_r\rangle$ of $\cO_{V,x}$ coincide.
 \end{lem}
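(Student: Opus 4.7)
The plan is to construct a coherent ideal sheaf $\cJ\subset\cI_W$ with $\cJ_x=I:=\langle g_1,\dots,g_r\rangle$, and then obtain the $F_i$ as sections of $\cJ(l_i)$ prescribed at $x$ modulo $\mathfrak{m}_xI$, the existence of enough such sections being guaranteed by Serre vanishing, and the match of ideals being enforced by Nakayama's lemma.

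\emph{Construction of $\cJ$.} Shrinking to an affine open $U\ni x$, extend $g_1,\dots,g_r$ to sections $\tilde g_i\in H^0(U,\cI_W|_U)$; let $Y\subset V$ be the scheme-theoretic closure in $V$ of $\{\tilde g_1=\cdots=\tilde g_r=0\}\subset U$, and set $\cJ:=\cI_Y\cap\cI_W$. Then $\cJ$ is a coherent subsheaf of $\cI_W$ with $\cJ_x=\cI_{Y,x}\cap\cI_{W,x}=I$ (since $I\subset\cI_{W,x}$), and, by the regular sequence hypothesis, the components of $Y$ meeting $U$ have codimension $r$ in $V$, so $V(\cJ)\subset Y\cup W$ has codimension at least $r$ on the connected component of $x$.

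\emph{Inductive construction.} Fix $s\in H^0(V,\cO_V(1))$ with $s(x)\neq 0$, so $s^l$ locally trivializes $\cO_V(l)$ near $x$. For $l\gg 0$, Serre vanishing applied to the ideal sheaf of $\overline{\{x\}}$ times $\cJ(l)$ yields the surjection $H^0(V,\cJ(l))\twoheadrightarrow\cJ(l)_x/\mathfrak{m}_x\cJ(l)_x\cong I/\mathfrak{m}_xI$. Build $F_i\in H^0(V,\cJ(l_i))$ one at a time, with $l_r\gg\cdots\gg l_1\gg 0$, subject to (a)~the linear condition $F_i/s^{l_i}\equiv g_i\pmod{\mathfrak{m}_xI}$ in $\cO_{V,x}$, and (b)~the open condition that $F_i$ be a non-zero-divisor on $V(F_1,\dots,F_{i-1})$. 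The latter scheme, which by induction is Cohen--Macaulay of pure codimension $i-1$ in $V$, has only finitely many irreducible components $V_\alpha$; if $x\in V_\alpha$, condition~(a) already forces $F_i|_{V_\alpha}\neq 0$ (since $g_i\notin\mathfrak{m}_xI$), while if $x\notin V_\alpha$, the estimate $i-1<r$ ensures $\cJ|_{V_\alpha}\neq 0$, so Serre vanishing provides, for $l_i$ large enough depending on $F_1,\dots,F_{i-1}$, sections of $\cJ(l_i)$ vanishing at $x$ and not vanishing identically on $V_\alpha$. The infinitude of $k$ then allows one to avoid this finite union of proper affine subspaces within the nonempty affine fiber over $g_i\in I/\mathfrak{m}_xI$.

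\emph{Conclusion.} By construction, each $F_i/s^{l_i}$ lies in $I$ and its image in $I/\mathfrak{m}_xI$ is $g_i$, so the $F_i/s^{l_i}$ generate $I$ modulo $\mathfrak{m}_xI$, and Nakayama's lemma yields $\langle F_1/s^{l_1},\dots,F_r/s^{l_r}\rangle=I$ in $\cO_{V,x}$. The main obstacle is reconciling the first-order condition~(a) with the global non-zero-divisor condition~(b) at each inductive step; this is overcome through the combination of the codimension inequality $i-1<r$ (which keeps $\cJ$ nontrivial on every component to be avoided) and Serre vanishing at sufficiently high twist.
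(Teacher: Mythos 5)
Your construction of the auxiliary sheaf $\mathcal{J}=\cI_Y\cap\cI_W$ with stalk $I$ at $x$ is exactly the paper's starting point, so the strategy is essentially the same; the gap lies in how you produce the sections. You claim that Serre vanishing gives a surjection $H^0(V,\mathcal{J}(l))\twoheadrightarrow \mathcal{J}(l)_x/\mathfrak{m}_x\mathcal{J}(l)_x$ and then impose $F_i/s^{l_i}\equiv g_i\pmod{\mathfrak{m}_xI}$. This works when $x$ is a closed point (the quotient is then a skyscraper), but the lemma is stated for an arbitrary $x\in V$, and it is genuinely needed for non-closed points: in Proposition \ref{propinj} the $x_s$ must be taken to be generic points of the components of the closure of $C(W_i)$ for the conclusion $\dim C(W_{2i+1})<\dim C(W_i)$ to follow. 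For such $x$, the sheaf $\mathcal{J}(l)/\cI_{\overline{\{x\}}}\mathcal{J}(l)$ lives on the positive-dimensional variety $\overline{\{x\}}$ and its global sections do not surject onto its stalk at the generic point; more to the point, the image of $H^0(V,\mathcal{J}(l))$ in the $r$-dimensional $k(x)$-vector space $I/\mathfrak{m}_xI$ is merely a $k$-subspace that spans over $k(x)$, and it has no reason to contain the prescribed element $g_i$. So condition~(a) cannot in general be met.

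The repair is to give up on hitting $g_i$ exactly: Nakayama only requires the images of $F_1/s^{l_1},\dots,F_r/s^{l_r}$ to be $k(x)$-linearly independent in $I/\mathfrak{m}_xI$, and global generation of $\mathcal{J}(l_i)$ (valid for $l_i\gg0$) makes this an achievable open condition at each step; this is the paper's route, which takes a general $\uF\in H^0(V,(\cI_W\cap\cI_Y)(\ul))$ and uses the auxiliary sheaves $\cI_W\cap\cI_{Y_i}$ to exhibit one explicit tuple generating $I$ at $x$. Two further soft spots in your write-up: first, when $x\in V_\alpha$, condition~(a) does not by itself force $F_i|_{V_\alpha}\neq0$ (a germ congruent to $g_i$ modulo $\mathfrak{m}_xI$ can perfectly well lie in the prime $\mathfrak{p}_\alpha$ of $V_\alpha$; what is true is that $g_i+\mathfrak{m}_xI\not\subseteq\mathfrak{p}_\alpha$, by $\mathrm{ht}(\mathfrak{p}_\alpha)=i-1<r=\mathrm{ht}(I)$, and one must still realize such a germ by a global section, which again needs a twist depending on $V_\alpha$); second, your $Y$, being the closure of $\{\tilde g_1=\dots=\tilde g_r=0\}$ over all of $U$, may have components of codimension $<r$ away from $x$, so one must first shrink $U$ to discard them (or take the closure of $\Spec(\cO_{V,x}/I)$, as the paper implicitly does).
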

 
 \begin{proof}
Let $Y\subset V$ and $Y_i\subset V$ be the schematic closures of $\{g_1=\dots=g_r=0\}$ and $\{g_i=0\}$ in $V$. Let $\cI_W,\cI_Y,\cI_{Y_i}\subset\cO_V$ be the ideal sheaves of $W$, $Y$ and $Y_i$ in $V$ and define $\cI:=\cI_W\cap\cI_Y$ and $\cI_i:=\cI_W\cap\cI_{Y_i}$. The subscheme of $V$ defined by $\cI$ has support $Y\cup W$, hence has pure codimension $r$ in $V$, and coincides with $Y$ in a neighbourhood of $x$. 
 
Choose $\ul\gg 0$ so that the sheaves $\cI(\ul)$ and $\cI_i(l_i)$ are generated by their global sections, and choose a general element $\uF\in H^0(V,\cI(\ul))$. Since $\cI_i(l_i)$ is globally generated, there exist $G_i\in H^0(V,\cI_i(l_i))\subset H^0(V,\cI(l_i))$ with $\langle G_i\rangle=\langle g_i\rangle\subset\cO_{V,x}$, hence with $\langle G_1,\dots,G_r\rangle=\langle g_1,\dots,g_r\rangle\subset\cO_{V,x}$. Since $\uF$ has been chosen general, we deduce that $\langle \uF\rangle=\langle g_1,\dots,g_r\rangle\subset\cO_{V,x}$. Since $\cI(\ul)$ is globally generated, we also see that $\uF$ forms a regular sequence. The lemma is proven.
 \end{proof}
 
 \begin{lem}
 \label{disappear}
 Let $W=W_0\sim_{\ul_1} W_1\sim_{\ul_2}\dots\sim_{\ul_j}W_j$ be links of subschemes of~$V$. Assume that $W$ is a local complete intersection at $x\in W$. For all $r$-tuples of integers $\ul_{2j+1}\gg\dots\gg \ul_{j+1}\gg0$, there exists a chain
 $W_j\sim_{\ul_{j+1}} W_{j+1}\sim_{\ul_{j+2}}\dots\sim_{\ul_{2j+1}}W_{2j+1}$ of links of subschemes of $V$ such that $x\notin W_{2j+1}$.
 \end{lem}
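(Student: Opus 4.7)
I would construct the extension by a ``mirror and finish'' strategy: first build $W_{j+1},\dots,W_{2j}$ so that $\cI_{W_{j+i},x}=\cI_{W_{j-i},x}$ for every $i\in\{0,\dots,j\}$. Granting this, $W_{2j}$ coincides locally with $W=W_0$ near $x$, hence is a local complete intersection at $x$, and a direct application of Lemma~\ref{choosesequence} yields a final link $W_{2j}\sim_{\ul_{2j+1}}W_{2j+1}$ with $x\notin W_{2j+1}$.

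\noindent\textbf{Mirror step.} For $i=1,\dots,j$, I proceed by induction on $i$ with the hypothesis that $\cI_{W_{j+i-1},x}=\cI_{W_{j-i+1},x}$. Since the regular sequence $\uF_{j-i+1}$ defining the original link cuts out a subscheme of $V$ containing $W_{j-i}\cup W_{j-i+1}$, its stalk at $x$ lies in $\cI_{W_{j-i},x}\cap\cI_{W_{j+i-1},x}$ by the inductive hypothesis. A straightforward variant of Lemma~\ref{choosesequence}, in which the ideal sheaves $\cI_W\cap\cI_{Y_i}$ used in its proof are replaced throughout by $\cI_{W_{j+i-1}}\cap\cI_{W_{j-i}}\cap\cI_{Y_i}$, then yields for $\ul_{j+i}\gg 0$ a regular sequence
\[
\uF_{j+i}\in H^0\bigl(V,(\cI_{W_{j+i-1}}\cap\cI_{W_{j-i}})(\ul_{j+i})\bigr)
\]
with $\langle\uF_{j+i}\rangle_x=\langle\uF_{j-i+1}\rangle_x$. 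Defining $W_{j+i}$ as the link of $W_{j+i-1}$ by $\uF_{j+i}$, the inductive step follows from
\[
\cI_{W_{j+i},x}=\bigl(\langle\uF_{j+i}\rangle_x:\cI_{W_{j+i-1},x}\bigr)=\bigl(\langle\uF_{j-i+1}\rangle_x:\cI_{W_{j-i+1},x}\bigr)=\cI_{W_{j-i},x},
\]
where the last equality is the Peskine--Szpiro symmetry of the original link $W_{j-i}\sim W_{j-i+1}$.

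\noindent\textbf{Finishing step.} At $i=j$ the mirror step gives $\cI_{W_{2j},x}=\cI_{W,x}$, so $W_{2j}$ is a local complete intersection at $x$. Applying Lemma~\ref{choosesequence} to $W_{2j}$ yields, for $\ul_{2j+1}\gg 0$, a regular sequence $\uF_{2j+1}\in H^0(V,\cI_{W_{2j}}(\ul_{2j+1}))$ with $\langle\uF_{2j+1}\rangle_x=\cI_{W,x}$. The link $W_{2j+1}$ then satisfies $\cI_{W_{2j+1},x}=(\cI_{W,x}:\cI_{W,x})=\cO_{V,x}$, so $x\notin W_{2j+1}$.

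\noindent\textbf{Main obstacle.} The principal technical point is the variant of Lemma~\ref{choosesequence} imposing vanishing on both $W_{j+i-1}$ and $W_{j-i}$. This is a mild modification of the original proof: one replaces the relevant ideal sheaves by their intersection with $\cI_{W_{j-i}}$ and checks that the resulting coherent sheaves remain globally generated after a sufficient twist. The remainder is a transparent colon ideal computation made possible by the combined control over $\cI_{W_{j+i-1},x}$ (from the induction) and over $\langle\uF_{j+i}\rangle_x$ (from the variant of Lemma~\ref{choosesequence}).
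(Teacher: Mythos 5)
Your proof is correct and follows essentially the same ``mirror and finish'' strategy as the paper: reverse the chain of links locally at $x$ so that $W_{2j}$ agrees with $W$ near $x$, then unlink by a complete intersection whose local ideal at $x$ equals $\cI_{W,x}$. The only (harmless) difference is that you require the new regular sequences to vanish globally on $W_{j-i}$ via a variant of Lemma~\ref{choosesequence}, whereas the paper applies Lemma~\ref{choosesequence} directly to $W_{j+i-1}$ with the germ of $\uF_{j-i+1}$ at $x$ --- which already lies in $\cI_{W_{j+i-1},x}$ by the inductive hypothesis --- so no modification of that lemma is actually needed.
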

 
 \begin{proof}
For $1\leq i\leq j$, let  $\underline{F}_i$ be the regular sequence yielding the link $W_{i-1}\sim_{\ul_i} W_i$. 
Thanks to Lemma \ref{choosesequence}, one may choose inductively, for $j+1\leq i\leq 2j$, an $r$-tuple $\ul_i\gg 0$, a regular sequence $\uF_i\in H^0(V,\cI_{W_{i-1}}(\ul_i))$ such that the ideals $\langle \underline{F}_i\rangle$ and $\langle \underline{F}_{2j+1-i}\rangle$ of $\cO_{V,x}$ coincide. This gives rise to a link $W_{i-1}\sim_{\ul_i} W_i$ with the property that $W_i$ and $W_{2j-i}$ coincide in a neighourhood of $x$, by the symmetry of the link construction.

 The subschemes $W_{2j}$ and $W_0=W$ then coincide in a neighbourhood of $x$, hence $W_{2j}$ is a local complete intersection at $x$, defined by a regular sequence $g_1,\dots,g_r\in\cI_{W_{2j},x}$. A final application of Lemma \ref{choosesequence} provides us with an $r$-tuple $\ul_{2j+1}\gg0$, and with a link $W_{2j}\sim_{\ul_{2j+1}}W_{2j+1}$ associated with a regular sequence $\underline{F}_{2j+1}$ such that $\langle \underline{F}_{2j+1}\rangle=\langle g_1,\dots,g_r\rangle\subset \cO_{V,x}$. It follows that $x\notin W_{2j+1}$.
 \end{proof}

 \subsection{Linkage in families}
 \label{family}
 
Let $B$ be a smooth variety over $k$. Let $\kW\subset V\times B$ be a closed subscheme of pure codimension $r$ with ideal sheaf $\cI_{\kW}\subset\cO_{V\times B}$, such that the second projection $f:\kW\to B$ is flat with Cohen--Macaulay fibers. Let $\ul$ be an $r$\nobreakdash-tuple such that, letting $p:V\times B\to B$ denote the second projection, the adjunction morphism $p^*p_*(\cI_{\kW}(\ul))\to\cI_{\kW}(\ul)$ is surjective and $R^ip_*(\cI_{\kW}(\ul))=0$ for $i>0$. Note that, under these conditions, the push-forward sheaf $E:=p_*(\cI_{\kW}(\ul))$ is a vector bundle such that the natural morphism $E|_b\to H^0(V,\cI_{\kW_b}(\ul))$ is an isomorphism for all $b\in B$ \cite[III,~Theorem~12.11]{Hartshorne}.
  
View $E\to B$ as a geometric vector bundle over $B$. A point of $E$ over $b\in B$ corresponds to a section $\uF\in H^0(V,\cI_{\kW_b}(\ul))$. Let $B'\subset E$ be the open subset of those points such that $\uF$ forms a regular sequence, and hence defines a complete intersection in $V$. Let $\kZ\subset V\times B'$ be the universal family of these complete intersections, and let $\kW_{B'}\subset V\times B'$ be the base change of $\kW$, with ideal sheaves $\cI_{\kZ},\cI_{\kW_{B'}}\subset\cO_{V\times B'}$. We consider the subscheme $\kW'\subset V\times B'$ with ideal sheaf $\cI_{\kW'}:=(\cI_{\kZ}:\cI_{\kW_{B'}})$ and we let $f':\kW'\to B'$ denote the second projection.

 By Proposition \ref{linkrel}, this extends the construction of \S\ref{link} in the relative setting.
  
 \begin{prop}
 \label{linkrel}
 The morphism $f':\kW'\to B'$ is flat with Cohen--Macaulay fibers.
 
 For $b\in B'$, one has $\cI_{\kW'_b}=(\cI_{\kZ_b}:\cI_{\kW_b})$.
 \end{prop}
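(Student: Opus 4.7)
Working Zariski-locally on $B'$, the plan is to identify the quotient $\cI_{\kW'}/\cI_\kZ$ with a Hom sheaf on the intermediate complete intersection $\kZ$, to check base change for that Hom sheaf, and then to extract both the flatness of $f'$ and the fiber formula. The setup is that $\kZ\to B'$ is flat with Gorenstein fibers: indeed, $\kZ$ is cut out in the smooth variety $V\times B'$ by the universal section $\uF$, which by construction of $B'$ restricts to a regular sequence on every fiber $V\times\{b\}$, so $\kZ$ is a relative complete intersection of pure codimension $r$ over $B'$. Simultaneously, $\kW_{B'}\to B'$ is the flat pullback of $\kW\to B$, hence $\kW_{B'}$ is Cohen--Macaulay of pure codimension $r$ in $V\times B'$, and Peskine--Szpiro \cite[Proposition 1.3]{PS} applied in the smooth ambient yields that $\kW'$ itself is Cohen--Macaulay of pure codimension $r$.

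By the very definition of the colon ideal, $\cI_{\kW'}/\cI_\kZ\subset\cO_\kZ$ is the annihilator of the ideal $\cI_{\kW_{B'}}/\cI_\kZ$ of $\kW_{B'}$ inside $\cO_\kZ$, which gives a canonical identification
$$\cI_{\kW'}/\cI_\kZ\isoto\mathcal{H}om_{\cO_\kZ}(\cO_{\kW_{B'}},\cO_\kZ),$$
together with the analogous fiberwise identity $(\cI_{\kZ_b}:\cI_{\kW_b})/\cI_{\kZ_b}\isoto\mathcal{H}om_{\cO_{\kZ_b}}(\cO_{\kW_b},\cO_{\kZ_b})$. The crux of the proof is to show that this Hom sheaf is flat over $B'$ and that its formation commutes with base change to fibers. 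A standard Ext base-change argument reduces this to the fiberwise vanishing $\mathcal{E}xt^i_{\cO_{\kZ_b}}(\cO_{\kW_b},\cO_{\kZ_b})=0$ for all $i>0$ and all $b\in B'$, which itself holds because $\cO_{\kW_b}$ is a maximal Cohen--Macaulay module (of the same dimension $\dim V - r$ as $\cO_{\kZ_b}$) over the Gorenstein ring $\cO_{\kZ_b}$. Concretely, I would carry out the reduction locally by using that $\cO_{\kW_{B'}}$ admits a length-$r$ locally free resolution over $\cO_{V\times B'}$ (Auslander--Buchsbaum, combined with $B'$-flatness of $\kW_{B'}$ to secure fiberwise exactness), together with the fundamental local isomorphism between Hom on $\kZ$ valued in $\omega_{\kZ/B'}$ and $\mathcal{E}xt^r$ on $V\times B'$ valued in $\omega_{V\times B'/B'}$, afforded by the Koszul structure of $\cI_\kZ$.

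Plugging the base-change statement into the exact sequence $0\to\cI_{\kW'}/\cI_\kZ\to\cO_\kZ\to\cO_{\kW'}\to 0$, whose first two terms are flat over $B'$, gives that $\cO_{\kW'}$ is flat over $B'$, so the sequence restricts faithfully to each fiber. This immediately yields $\cI_{\kW'_b}=(\cI_{\kZ_b}:\cI_{\kW_b})$, i.e., the scheme-theoretic fiber of $f'$ over $b$ is precisely the linked subscheme in $V$, and a final application of \cite[Proposition 1.3]{PS} inside $V$ shows that this fiber is Cohen--Macaulay. The main obstacle is the base-change claim in the middle step: the underlying fiberwise Ext vanishing is a classical fact about maximal Cohen--Macaulay modules over Gorenstein rings, but threading it through the relative setting requires some care.
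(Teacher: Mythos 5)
Your argument is correct, but it takes a genuinely different route from the paper's. The paper handles the two assertions separately and more tersely: flatness of $f'$ comes from ``miracle flatness'' (the total space $\kW'$ is Cohen--Macaulay by Peskine--Szpiro, the base $B'$ is regular, and $f'$ is equidimensional), and the fiberwise identity $\cI_{\kW'_b}=(\cI_{\kZ_b}:\cI_{\kW_b})$ is obtained by choosing a regular system of parameters of $\cO_{B',b}$ and applying $N$ times a lemma of Huneke and Ulrich asserting that the colon ideal commutes with reduction modulo a suitable regular element, exactly as in their Proposition~2.13. You instead sheafify the duality-theoretic heart of linkage: you identify $\cI_{\kW'}/\cI_{\kZ}$ with $\mathcal{H}om_{\cO_{\kZ}}(\cO_{\kW_{B'}},\cO_{\kZ})$, prove that this relative Hom is flat over $B'$ and commutes with base change via the vanishing of higher $\mathcal{E}xt$'s for maximal Cohen--Macaulay modules over the Gorenstein fibers $\kZ_b$, and then read off both flatness of $\cO_{\kW'}$ (from the local criterion applied to $0\to\cI_{\kW'}/\cI_{\kZ}\to\cO_{\kZ}\to\cO_{\kW'}\to0$, using that the fiber of the Hom sheaf injects into $\cO_{\kZ_b}$ via evaluation at $1$) and the fiber formula in one stroke. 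The two proofs are cousins---the Huneke--Ulrich lemma is itself proved by a depth/duality argument of the kind you globalize---but yours trades the paper's one-parameter-at-a-time local reduction for the machinery of cohomology and base change for relative Ext (in the style of Altman--Kleiman), which, as you note, must be set up with some care; in exchange it makes the role of the Gorenstein property of the complete intersections explicit and yields the stronger intermediate statement that $\cI_{\kW'}/\cI_{\kZ}$ is flat over $B'$ and compatible with arbitrary base change.
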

  
  \begin{proof}
  The scheme $\kW$ is Cohen--Macaulay by \cite[Corollaire 6.3.5 (ii)]{EGA42}, hence so is $\kW'$ by \cite[Proposition 1.3]{PS}. Since $f'$ is equidimensional with regular base and Cohen--Macaulay total space, it is flat by \cite[Proposition 6.1.5]{EGA42}.
 
Choose a regular system of parameters $x_1,\dots,x_N$  of the regular local ring $\cO_{B',b}$. To show the equality of ideals $\cI_{\kW'_b}=(\cI_{\kZ_b}:\cI_{\kW_b})$ at a point $v\in  V_b$, one may apply $N$ times sucessively \cite[Lemma 2.12]{HUDCG} in the local ring $\cO_{V_b,v}$ (this is essentially what is done in \cite[Proposition 2.13]{HUDCG}).
 That the fibers $\kW'_b$ of $f'$ are Cohen--Macaulay now follows from \cite[Proposition~1.3]{PS}.
  \end{proof}

 \subsection{Moduli of links}
 \label{generic}
 
We now iterate the construction of \S\ref{family}, thus adapting to our global setting a local construction due to Huneke and Ulrich \cite{HUstructure, HUalgebraic}.
  
Recall that $W\subset V$ is a Cohen--Macaulay closed subscheme of pure codimension~$r$.  We set $L_0(W):=\Spec(k)$, $\kW_0:=W$ and $f_0:\kW_0\to L_0(W)$ to be the structural morphism. We inductively construct $f_i:\kW_i\to L_i(W)$ for $i\geq 1$, by choosing an $r$-tuple $\ul_i\gg 0$, by applying the construction of \S\ref{family} to $f_{i-1}:\kW_{i-1}\to L_{i-1}(W)$, and by setting $(f_i:\kW_i\to L_i(W))=(f'_{i-1}:\kW'_{i-1}\to L_{i-1}(W)')$. The varieties $L_i(W)$ are irreducible, smooth over $k$ and $k$-rational, and the morphisms $f_i$ are flat with Cohen--Macaulay fibers, as an induction based on Proposition \ref{linkrel} shows.
  
% See Huneke and Ulrich \cite[Definitions 2.10 a) and 2.12 a)]{HUstructure},
 %they do not restrict to the good opens.
 %They call universal generic and generic universal(!)
We call $f_j:\kW_j\to L_j(W)$ the $j$-th \textit{moduli of links} 
 of $W$ (with respect to the degrees $\ul_1,\dots, \ul_j$).
 Its points parametrize sequences $(\underline{F}_i)_{1\leq i\leq j}$ of regular sequences that give rise to chains of linked subvarieties $W=W_0\sim_{\ul_1} W_1\sim_{\ul_2}\dots\sim_{\ul_j}W_j$ of $V$.

\begin{rem}
The construction of the $j$-th moduli of links $f_j:\kW_j\to L_j(W)$ goes through with no modifications if $k$ is finite, but beware that $L_j(W)(k)$ might be empty in this case.
\end{rem}

\subsection{Images by a morphism}   
\label{morphism}
  
In \S\S\ref{morphism}-\ref{realpar}, we fix a smooth morphism ${\pi:V\to X}$ of smooth projective varieties over $k$
and we let $d$ and $n$ be the dimensions of~$W$ and~$X$. 
In Propositions \ref{Hironaka}, \ref{propBertini}, \ref{propinj} and \ref{Rlink},
 we study the images by~$\pi$ of subvarieties of~$X$ linked to $W$, under the assumption that $n>2d$.  
Proposition~\ref{Hironaka} is due to Hironaka \cite{Hironakasmoothing}. Proposition~\ref{propBertini} is a simple Bertini theorem. Proposition~\ref{propinj}, which is the main result of \S\ref{morphism}, is more delicate since one must deal with singularities of varieties linked to~$W$. Proposition \ref{Rlink}, which is the main result of \S\ref{realpar}, is specific to the case where $k$ is a real closed field.

\begin{prop}[Hironaka]
\label{Hironaka}
Assume that $n>2d$, that $d\leq 3$, and that $W$ is smooth. Then, for $j\geq 4$ and $r$-tuples of integers $\ul_j\gg \dots \gg \ul_{1}\gg0$, there exists a chain of linked smooth subvarieties $W=W_0\sim_{\ul_1} W_1\sim_{\ul_2}\dots\sim_{\ul_j}W_j$ of $V$ such that $\pi|_{W_j}:W_j\to X$ is a closed embedding.
 \end{prop}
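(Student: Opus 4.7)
The plan is to deduce the proposition from Hironaka's smoothing theorem by working in the relative moduli $f_j:\kW_j\to L_j(W)$ of iterated links constructed in \S\ref{generic}. Since each $L_j(W)$ is $k$-rational and $k$ is infinite, it suffices to exhibit a nonempty Zariski open subset $U\subset L_j(W)$ over which the fiber $W_j:=\kW_{j,b}$ (and each intermediate $W_i$) is smooth and $\pi|_{W_j}:W_j\to X$ is a closed embedding.

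For smoothness I would argue by induction on $i$. If $W_{i-1}$ is smooth, then for $\ul_i\gg 0$ the sheaf $\cI_{W_{i-1}}(\ul_i)$ is globally generated, and a standard Bertini argument shows that a generic regular sequence $\uF_i\in H^0(V,\cI_{W_{i-1}}(\ul_i))$ defines a complete intersection $Z_i$ which is smooth on $V\setminus W_{i-1}$. Consequently $W_i$ is smooth outside the strictly lower-dimensional closed subset $W_{i-1}\cap W_i$. The delicate point, and the heart of Hironaka's theorem, is that when $d\leq 3$ these residual singularities along $W_{i-1}\cap W_i$ can be cleared after four successive links; I would invoke \cite[Theorem p.~50]{Hironakasmoothing} and the local constructions in its proof rather than reprove this analysis, recasting them in the parametric language of \S\ref{family} via Proposition~\ref{linkrel}.

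For the closed embedding property, once $W_j$ is smooth of dimension $d$, I would use the last link $W_{j-1}\sim_{\ul_j}W_j$ as a parameter. The space $H^0(V,\cI_{W_{j-1}}(\ul_j))$ is very large for $\ul_j\gg 0$, so it accommodates generic open conditions on $W_j$. Using $n>2d$, the locus of pairs $(w,w')\in W_j\times W_j$ with $\pi(w)=\pi(w')$ and $w\neq w'$ has expected dimension $2d-n<0$, and likewise the ramification locus of $\pi|_{W_j}$ has negative expected dimension. A standard incidence-variety dimension count over the parameter space then yields a nonempty open locus of $\uF_j$ for which both loci are empty, producing an injective immersion. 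Projectivity of $W_j$ upgrades this to a closed embedding, and intersecting with the smoothness open set gives the required $U$.

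The main obstacle is the smoothing step for $d\in\{2,3\}$. A direct Bertini argument only smooths $W_i$ away from $W_{i-1}$, and controlling the residual singular locus on $W_{i-1}\cap W_i$ after further links is precisely the content of Hironaka's intricate local construction of ideal generators; it cannot be replaced by a naive genericity argument. By contrast, once smoothness is in hand, the embedding step is routine given the codimension inequality $n>2d$.
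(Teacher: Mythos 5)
Your smoothness step is essentially what the paper does: for $d\leq 3$ a \emph{single} general link of a smooth subvariety is already smooth, by \cite[Corollary 3.9.1]{Hironakasmoothing}, so the paper simply cites this for each $W_i$. (Your phrasing slightly misattributes the role of the four links: they are not needed to clear residual singularities along $W_{i-1}\cap W_i$ — smoothness holds link by link — they are needed for the embedding property.)

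The genuine gap is in your closed-embedding step. You propose to treat only the last link $W_{j-1}\sim_{\ul_j}W_j$ as a parameter and to kill the double-point and ramification loci by a single incidence-variety dimension count using $2d-n<0$. This fails because $W_j$ is not a generic $d$-dimensional subvariety of $V$: it is residual to $W_{j-1}$ inside a complete intersection that is constrained to contain $W_{j-1}$. Genericity of $\uF_j$ controls $W_j$ only away from $W_{j-1}$ (this is exactly Proposition~\ref{propBertini}: $S=W_j\setminus(W_{j-1}\cap W_j)$ is smooth, embeds under $\pi$, and avoids $\pi(W_{j-1})$). Along $W_{j-1}\cap W_j$, which is typically a divisor in $W_{j-1}$, the map $\pi|_{W_j}$ inherits the failures of $\pi|_{W_{j-1}}$: if $x,y\in W_{j-1}\cap W_j$ with $\pi(x)=\pi(y)$, then $\pi|_{W_j}$ again has a double point, and no choice of $\uF_j$ can move these points. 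One link therefore only cuts down the locus $A(W_i)$ where $\pi|_{W_i}$ fails to be a local closed embedding; the correct argument (the paper's, following the proof of \cite[Lemma 5.1.1]{Hironakasmoothing}) shows $\dim A(W_{i+1})\leq\dim A(W_i)$ with strict inequality when $A(W_i)\neq\varnothing$, and since $\dim A(W_0)\leq d\leq 3$ one needs four links to reach $A(W_j)=\varnothing$. This iteration is precisely why the hypothesis $j\geq 4$ appears in the statement; under your argument $j=1$ would suffice after smoothing, which is not the case.
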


\begin{proof}
Choose general links $W=W_0\sim_{\ul_1} W_1\sim_{\ul_2}\dots\sim_{\ul_j}W_j$.
Since $d\leq 3$ and $W$ is smooth, the $W_i$ are smooth by \cite[Corollary 3.9.1]{Hironakasmoothing}.

That $\pi|_{W_j}:W_j\to X$ is a closed embedding may be checked over an algebraic closure of $k$, where it follows from the proof of \cite[Lemma 5.1.1]{Hironakasmoothing}. More precisely, define $A(W_i)\subset W_i$ to be the closed subset of those $x\in W_i$ such that ${\pi|_{W_i}:W_i\to X}$ is not a closed embedding above a neighbourhood of $\pi(x)$.  One has ${\dim(A(W_0))\leq\dim(W)\leq 3}$. Moreover, $\dim(A(W_{i+1}))\leq\dim(A(W_i))$, and the inequality is strict if $A(W_i)\neq\varnothing$, by the proof of \cite[Lemma 5.1.1]{Hironakasmoothing}. It follows that $A(W_i)=\varnothing$ for $i\geq 4$, hence that $\pi|_{W_j}:W_j\to X$ is a closed embedding.
\end{proof}

\begin{prop}
\label{propBertini}
Assume that $n>2d$, and let $\ul=(l_1,\dots,l_r)$ be an $r$-tuple of integers such that the linear systems $H^0(V,\cI_W(l_i))$ embed $V\setminus W$ in projective spaces. 
Then, for $\uF\in H^0(V,\cI_W(\ul))$ general, the link $W\sim_{\ul}W'$ associated with $\uF$ satisfies:
\begin{enumerate}[(i)]
\item The variety $S:=W'\setminus(W\cap W')$ is smooth.
\item The morphism $\pi|_{S}:S\to X$ is an embedding.
\item The subsets $\pi(S)$ and $\pi(W)$ of $X$ are disjoint.
\end{enumerate}
\end{prop}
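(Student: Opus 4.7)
The three claims are genericity statements on $\uF\in P:=H^0(V,\cI_W(\ul))$; for each, the strategy is to build an incidence variety over $P$ and use the hypothesis $n>2d$ to show that its image in $P$ is not dense, after which intersecting the three complements yields a single generic $\uF$ satisfying (i)--(iii). For (i), note that $\cI_W|_U=\cO_U$ on $U:=V\setminus W$, so the scheme $S=W'\cap U$ is cut out on $U$ by the sections $F_1,\dots,F_r$ of $\cO_V(l_i)|_U$. Since each linear system $H^0(V,\cI_W(l_i))$ embeds $U$ into projective space, an iterated application of Bertini's theorem on the embedded $U$ (restrictions of embeddings to the smooth successive intermediate divisors are again embeddings, hence basepoint-free and separating tangent vectors) shows that for a generic $\uF$ the scheme $S$ is smooth of pure dimension $d$.

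For (ii), I would consider the incidence variety
\[
I:=\bigl\{(x,y,\uF)\in\bigl((U\times_X U)\setminus\Delta_U\bigr)\times P : \uF(x)=0,\ \uF(y)=0\bigr\}.
\]
Because each linear system separates distinct pairs of points of $U$, the fiber of the first projection over any $(x,y)$ is an affine subspace of codimension exactly $2r$ in $P$. Smoothness of $\pi$ yields $\dim\bigl((U\times_X U)\setminus\Delta_U\bigr)\le 2(d+r)-n$, hence $\dim I\le 2d-n+\dim P<\dim P$ by the hypothesis $n>2d$. So a generic $\uF$ lies outside the image of $I\to P$, which is precisely the condition that $\pi|_S$ be injective on points. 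A parallel dimension count over the variety of triples $(x,[v],\uF)$ with $x\in U$, $[v]\in\P(\ker d\pi_x)$, $\uF(x)=0$, and $d\uF_x(v)=0$---using the separation of tangent vectors to make each pair $F_i(x)=0$, $dF_i(x)(v)=0$ impose two independent conditions in $H^0(V,\cI_W(l_i))$---shows that $d(\pi|_S)_x$ is injective for generic $\uF$. Combined with the properness of $\pi|_{W'}$ and part~(iii), this gives a locally closed embedding $\pi|_S:S\hookrightarrow X$.

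For (iii), the incidence variety
\[
J:=\bigl\{(x,y,\uF)\in(U\times_X W)\times P : \uF(x)=0\bigr\}
\]
has base of dimension at most $(d+r)+d-n$ and fiber of codimension $r$ in $P$ (the sections $F_i$ have no common base point on $U$), so $\dim J\le 2d-n+\dim P<\dim P$, and a generic $\uF$ forces $\pi(S)\cap\pi(W)=\varnothing$. The main obstacle I anticipate is the careful setup of the tangent-vector incidence used for (ii), in particular checking that the two vanishing conditions imposed on each $F_i$ at a pair $(x,[v])$ are genuinely independent when $\ker d\pi_x$ is positive-dimensional; this is where the full strength of the ``embedding of $U$'' hypothesis is used, but once it is in hand the count produces the required strict inequality $2d-n-1+\dim P<\dim P$.
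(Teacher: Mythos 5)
Your proof is correct and follows essentially the same route as the paper, which isolates a general Bertini-type lemma and performs a single dimension count over the Hilbert scheme of length-$2$ subschemes contained in fibers of $\pi$ --- a packaging of your two incidence varieties (distinct point pairs in a fiber, and points with a tangent vector in $\ker d\pi$) into one parameter space, with the same codimension-$2r$ conditions coming from the hypothesis that the linear systems embed $V\setminus W$. Your separate treatment of the two loci, and your use of the properness of $\pi|_{W'}$ together with (iii) to upgrade the injective immersion to a genuine locally closed embedding, are both sound (the latter point is in fact handled more carefully than in the paper's own proof).
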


\begin{proof}
Apply Lemma \ref{Bertini} below with $Y=V\setminus W$, $f=\pi|_{V\setminus W}$, and $F=\pi(W)$.
\end{proof}

\begin{lem}
\label{Bertini}
Let $f:Y\to X$ be a smooth morphism of smooth varieties over $k$, let $F\subset X$ be a closed subset,
and let $V_1,\dots,V_r$ be linear systems on $Y$ inducing embeddings of $Y$ in projective spaces. If one has $\dim(X)>2(\dim(Y)-r)$ and $\dim(X)>\dim(F)+\dim(Y)-r$, then, for general $\sigma_i\in V_i$, the variety ${S:=\{\sigma_i=0\}}$ is smooth, the morphism $f|_S:S\to X$ is an embedding, and ${f(S)\cap F=\varnothing}$.
\end{lem}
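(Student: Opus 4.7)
The plan is to verify the three claims---smoothness of $S$, that $f|_S\colon S\to X$ is an embedding, and $f(S)\cap F=\varnothing$---by combining Bertini's theorem with standard incidence/dimension-count arguments over the parameter space $V_\bullet:=V_1\times\cdots\times V_r$. Throughout, set $N:=\sum_i\dim V_i$; the goal is to exhibit each required property as a dense open condition on $V_\bullet$.

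The smoothness of $S$ for general $\sigma_\bullet$ is an iterated application of Bertini's smoothness theorem: since each $V_i$ embeds $Y$ into a projective space, its members are hyperplane sections under this embedding, and a general hyperplane section of a smooth subvariety of projective space is smooth. To prove that $f|_S$ is an embedding, I handle injectivity on $\bar k$-points and unramifiedness separately. For injectivity, form the incidence
\[
I_1:=\{(y_1,y_2,\sigma_\bullet)\in Z\times V_\bullet \mid \sigma_i(y_1)=\sigma_i(y_2)=0\text{ for all }i\},
\]
where $Z:=(Y\times_X Y)\setminus\Delta_Y$. The smoothness of $f$ gives $\dim Z=2\dim Y-\dim X$, and since $V_i$ separates points the fiber over $(y_1,y_2)$ has codimension $2r$ in $V_\bullet$. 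Hence $\dim I_1\le 2\dim Y-\dim X+N-2r<N$ by the hypothesis $\dim X>2(\dim Y-r)$, so the image of $I_1$ in $V_\bullet$ misses a dense open subset.

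For unramifiedness, replace $Z$ by the projectivization $P:=\mathbb{P}(\ker df)$ of the vertical tangent bundle of $f$, which has dimension $2\dim Y-\dim X-1$. At a point $(y,[v])\in P$, the conditions $\sigma_i(y)=0$ and $d\sigma_i|_y(v)=0$ cut out a codimension-$2r$ linear subspace of $V_\bullet$ because $V_i$ separates tangent vectors (equivalently, surjects onto the fiber of the first-jet bundle at $y$). The resulting incidence has dimension $\le 2\dim Y-\dim X-1+N-2r<N$ by the same hypothesis, so a generic $\sigma_\bullet$ satisfies $T_yS\cap\ker(df)_y=0$ at every $y\in S$. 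Finally, for $f(S)\cap F=\varnothing$, the smoothness of $f$ gives $\dim f^{-1}(F)\le\dim F+\dim Y-\dim X$; the incidence of $(y,\sigma_\bullet)\in f^{-1}(F)\times V_\bullet$ with $\sigma_i(y)=0$ for all $i$ then has dimension at most $\dim F+\dim Y-\dim X+N-r<N$ by the hypothesis $\dim X>\dim F+\dim Y-r$, so a general $\sigma_\bullet$ gives $S\cap f^{-1}(F)=\varnothing$.

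Intersecting the four open loci produced by these arguments yields a general $\sigma_\bullet$ having all required properties. The one subtlety beyond these dimension counts is the passage from \emph{injective plus unramified} to \emph{locally closed embedding}, which I expect to be the main obstacle to a fully rigorous proof: I will handle it by invoking the standard fact that an unramified universally injective morphism of finite type is a monomorphism, and that on a smooth quasi-projective source this monomorphism is automatically an immersion in the sense of Grothendieck.
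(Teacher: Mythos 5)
Your argument is essentially the paper's. The paper runs a single dimension count over the Hilbert scheme $H$ of length-$2$ subschemes contained in the fibers of $f$ (of dimension $2\dim(Y)-\dim(X)$), imposing the codimension-$2r$ condition that all $\sigma_i$ vanish on such a subscheme; you treat the two strata of $H$ separately, pairs of distinct points via $(Y\times_XY)\setminus\Delta_Y$ and vertical tangent directions via $\mathbb{P}(\ker df)$. The dimension counts, the role of the hypothesis $\dim(X)>2(\dim(Y)-r)$, and the treatment of $F$ (codimension $r$ over $f^{-1}(F)$, using $\dim(X)>\dim(F)+\dim(Y)-r$) coincide exactly with the paper's, and they are correct.

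The one point to flag is the ``standard fact'' in your last sentence: it is true that a universally injective unramified morphism of finite type is a monomorphism, but it is \emph{not} true that such a monomorphism with smooth quasi-projective source is automatically an immersion. For instance, restricting the normalization $\mathbb{A}^1\to C\subset\mathbb{A}^2$ of a nodal cubic to the complement of one of the two preimages of the node yields a universally injective unramified morphism from a smooth curve to $\mathbb{A}^2$ whose image is not an isomorphism onto any locally closed subscheme. The paper elides exactly the same point (it simply asserts that avoiding $B$ ``suffices'' for $f|_S$ to be an embedding), and the issue is harmless for the way the lemma is used later, since Propositions 1.9, 1.12 and Lemma 1.14 only exploit that $f|_S$ is injective and immersive (together with part (iii), which in the actual application also forces $f(S)$ to be locally closed). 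So your proof establishes the same content as the paper's; just be aware that the final reduction from ``injective and unramified'' to ``embedding'' needs either this extra remark or a weakening of the stated conclusion.
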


\begin{proof}
The smoothness of $S$ follows from the Bertini theorem. For general $\sigma_i$, the complete intersection $S\cap f^{-1}(F)$ in $f^{-1}(F)$ is empty, hence ${f(S)\cap F=\varnothing}$.

 Let $H$ be the Hilbert scheme parametrizing zero-dimensional subschemes $Z\subset Y$ of length $2$ in the fibers of $f$. Let $B\subset H\times V_1\times\dots\times V_r$ be the closed subset parametrizing tuples $([Z],\sigma_1,\dots,\sigma_r)$ such that the $\sigma_i$ vanish on $Z$. Computing
$$\dim(B)=\dim(H)+\sum_i\dim(V_i)-2r=2\dim(Y)-\dim(X)+\sum_i\dim(V_i)-2r,$$
shows that $\dim(B)<\sum_i\dim(V_i)$.
To ensure that $f|_S$ is an embedding, it suffices to choose $(\sigma_1,\dots,\sigma_r)$ outside of the image of the projection $B\to V_1\times\dots\times V_r$.
\end{proof}

\begin{prop}
\label{propinj}
Assume that $n>2d$ and that $W$ is a local complete intersection. For $j\gg 0$ and for $r$-tuples of integers $\ul_j\gg \dots \gg \ul_{1}\gg0$, there exists a chain of linked  subvarieties $W=W_0\sim_{\ul_1} W_1\sim_{\ul_2}\dots\sim_{\ul_j}W_j$ of $V$ with the property that $\pi|_{W_j}:W_j\to X$ is geometrically injective.
\end{prop}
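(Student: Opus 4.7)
I would adapt Hironaka's proof of Proposition~\ref{Hironaka} to the present setting, where two substitutions are needed: track the non-injectivity locus in place of the non-embedding locus, and work with the local complete intersection hypothesis in place of smoothness. For a closed subscheme $W' \subset V$, set
\begin{equation*}
B(W') := \bigl\{x \in W'(\overline{k}) : \exists\, y \in W'(\overline{k}),\ y \neq x,\ \pi(x) = \pi(y)\bigr\}.
\end{equation*}
The goal is to produce a chain ending at $W_j$ with $B(W_j) = \varnothing$.

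The plan has two phases. In Phase~1, I would reduce $B(W_i)$ to a finite set of points by successive generic links with $\ul_i \gg 0$. A case analysis built on Proposition~\ref{propBertini} shows that any bad point $x$ of $W_{i+1}$ and its partner $y$ must both lie in the residual intersection $W_i \cap W_{i+1}$ (otherwise they would contradict (ii) or (iii) of that proposition), hence both lie in $W_i$, forcing $x \in B(W_i)$. Thus $B(W_{i+1}) \subset B(W_i) \cap W_{i+1}$, the sequence $B(W_i)$ is nested, and it is always contained in $W_0 = W$. A Bertini-type argument in the linear system $H^0(V, \cI_{W_i}(\ul_{i+1}))$---which, for $\ul_{i+1} \gg 0$, has enough flexibility at points of $W_i$ thanks to Lemma~\ref{choosesequence}---then shows that a generic link strictly decreases the dimension of $B(W_i)$ whenever it is positive. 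After at most $d$ such steps one reaches an index $j_0$ with $B(W_{j_0}) = \{x_1,\dots,x_N\}$ finite.

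In Phase~2, I would eliminate the remaining bad points one at a time. Since $B(W_{j_0}) \subset W_0 = W$ and $W$ is a local complete intersection at each $x_k$ by hypothesis, Lemma~\ref{disappear} applies: applying it successively (with the Lemma~\ref{disappear} chains chosen generically subject to their local constraints, so that Proposition~\ref{propBertini} continues to control the behaviour of $B$ away from the $x_k$ and no new bad points are introduced, and if necessary interleaving further Bertini steps as in Phase~1) produces an extension of the chain whose final term $W_j$ avoids every $x_k$ and retains an empty non-injectivity locus, so that $\pi|_{W_j}$ is geometrically injective.

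The main obstacle I foresee is the strict-decrease claim in Phase~1. Unlike in Proposition~\ref{Hironaka}, where Hironaka could exploit the smoothness of each $W_i$, here $W_i$ is singular along $W_{i-1}\cap W_i$, and the section $\uF_{i+1}$ is constrained to vanish on $W_i$. Verifying that enough genericity remains in $H^0(V, \cI_{W_i}(\ul_{i+1}))$ to avoid any prescribed positive-dimensional subset of $B(W_i)$ requires combining Lemma~\ref{choosesequence} (to prescribe the local behaviour of $\uF_{i+1}$ at chosen points of $W_i$) with a dimension count in the spirit of Lemma~\ref{Bertini} carried out on the Hilbert scheme of length-two subschemes in fibres of $\pi$ supported in $W_i$; this is where I expect the technical weight of the argument to lie.
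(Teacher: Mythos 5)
Your overall architecture (nestedness of the non-injectivity locus via Proposition~\ref{propBertini}, containment in $W_0=W$, and point removal via Lemma~\ref{disappear} using the lci hypothesis) matches the paper's, and you correctly locate the crux in the dimension-decrease step. But the mechanism you propose for that step --- that a \emph{single} generic link $W_i\sim_{\ul_{i+1}}W_{i+1}$ strictly decreases $\dim B(W_i)$, to be justified by a Bertini count in $H^0(V,\cI_{W_i}(\ul_{i+1}))$ --- fails, and no amount of genericity in that one linear system can repair it. The residual $W_{i+1}$ of \emph{any} link of $W_i$ necessarily contains every point where $W_i$ is not a local complete intersection: if $\cI_{W_i,x}$ needs more than $r$ generators, then $\cI_{Z,x}\subsetneq\cI_{W_i,x}$ and hence $(\cI_{Z,x}:\cI_{W_i,x})\neq\cO_{V,x}$. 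After the first link, $B(W_i)$ typically sits inside $W_{i-1}\cap W_i$, exactly where $W_i$ acquires non-lci singularities, so a positive-dimensional piece of $B(W_i)$ whose $\pi$-partners also lie in the non-lci locus survives into $B(W_{i+1})$ intact. Lemma~\ref{choosesequence} gives you no leverage here: it lets you prescribe a regular sequence of length $r$ in $\cI_{W_i,x}$, but at a non-lci point that sequence cannot generate the ideal, so the link it defines still passes through $x$.

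The paper's resolution is structurally different: the dimension drop is achieved not by one link but by the block of $i+1$ links taking $W_i$ to $W_{2i+1}$. One picks a point $x_s$ in each component of the closure of $C(W_i)$; since $C(W_i)\subset C(W_0)\subset W_0$ and $W_0$ is lci, Lemma~\ref{disappear} (whose proof reverses the chain so that $W_{2i}$ coincides with $W_0$ near $x_s$, and only then performs one removing link) produces \emph{some} chain $W_i\sim\dots\sim W_{2i+1}^{(s)}$ avoiding $x_s$; genericity in the moduli of links of \S\ref{generic} then forces the actual general chain to avoid all the $x_s$, and the nestedness $C(W_{2i+1})\subset C(W_i)$ converts this into $\dim C(W_{2i+1})<\dim C(W_i)$. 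This doubling is why the statement needs $j\gg 0$ (the paper takes $j\geq 2^{d+1}-1$), whereas your two-phase plan would terminate after roughly $d$ links plus a cleanup. Your Phase~2 is essentially the paper's mechanism, but it must be deployed at every dimension step, not only once a finite set remains.
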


\begin{proof}
Let $W=W_0\sim_{\ul_1} W_1\sim_{\ul_2}\dots\sim_{\ul_j}W_j$ be a general chain of links.
Define $C(W_i)\subset W_i$ to be the constructible subset of those $x\in W_i$ such that $(\pi|_{W_i})^{-1}(\pi(x))$ has more than one geometric point. Of course, one has $\dim(C(W_0))\leq\dim(W)=d$. Proposition \ref{propBertini} implies that $C(W_{i+1})\subset C(W_i)$ for all ${i\geq 0}$. We also claim that $\dim(C(W_{2i+1}))<\dim(C(W_i))$ if $2i+1\leq j$ and ${C(W_i)\neq\varnothing}$. These facts imply that $C(W_{j})=\varnothing$ if $j\geq 2^{d+1}-1$, which concludes.

It remains to prove the claim. Assume that $2i+1\leq j$ and that ${C(W_i)\neq\varnothing}$. Choose finitely many points $x_1,\dots,x_N\in C(W_i)$, including at least one in each irreducible component of the Zariski closure of $C(W_i)$. By Lemma \ref{disappear}, there exists a chain of links $W_i\sim_{\ul_i}W_{i+1}^{(s)}\sim_{\ul_{i+1}}\!\dots\sim_{\ul_{2i+1}}W^{(s)}_{2i+1}$ such that ${x_s\notin W^{(s)}_{2i+1}}$, for all ${1\leq s\leq N}$. Since $W_i\sim_{\ul_i}W_{i+1}\sim_{\ul_{i+1}}\!\dots\sim_{\ul_{2i+1}}W_{2i+1}$ corresponds to a general point of the $(i+1)$-th moduli of links of $W_i$ in the sense of \S\ref{generic}, we deduce that $x_s\notin W_{2i+1}$, hence that $x_s\notin C(W_{2i+1})$ for $1\leq s\leq N$. The chain of inclusions $C(W_{2i+1})\subset C(W_{2i})\subset\dots\subset C(W_{i})$ implies that $\dim(C(W_{2i+1}))<\dim(C(W_i))$, which proves the claim.
\end{proof}

\begin{rem}
The proof of Proposition \ref{propinj} requires to use a huge number of links (exponential in $d$). We do not know if this is necessary, or a quirk of the proof.
\end{rem}

  \subsection{Real loci} 
  \label{realpar}
  
In \S\ref{realpar}, we keep the notation of \S\ref{morphism} and assume moreover that $k=R$ is a real closed field, for instance the field $\R$ of real numbers.  

\begin{lem}
\label{doublerien}
Fix $r$\nobreakdash-tuples of integers $\ul_1=(l_{1,1},\dots, l_{1,r})$ and $\ul_2=(l_{2,1},\dots, l_{2,r})$ with $l_{2,i}-l_{1,i}$ nonnegative and even for $1\leq i\leq r$. Let $W\sim_{\ul_1}W_1$ be a link. Then there exists a link $W_1\sim_{\ul_2}W_2$ 
such that $W_2=W$ in a neighbourhood of~$V(R)$.
\end{lem}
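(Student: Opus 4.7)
The strategy is to build the link $W_1 \sim_{\ul_2} W_2$ explicitly by modifying the regular sequence $\uF_1 = (F_{1,1},\dots,F_{1,r}) \in H^0(V, \cI_W(\ul_1))$ that defines $W \sim_{\ul_1} W_1$. Since $\uF_1$ vanishes on $Z_1 = W \cup W_1$, we also have $\uF_1 \in H^0(V, \cI_{W_1}(\ul_1))$. Writing $l_{2,i} - l_{1,i} = 2m_i$ with $m_i \geq 0$, I would set $F_{2,i} := h_i F_{1,i} \in H^0(V, \cI_{W_1}(l_{2,i}))$ for sections $h_i \in H^0(V, \cO_V(2m_i))$ that are nowhere zero on $V(R)$. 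Such $h_i$ exist: take $h_i = 1$ if $m_i = 0$, and otherwise pick sections $s_{i,0}, \dots, s_{i,N_i}$ of the base-point-free bundle $\cO_V(m_i)$ with no common zero and set $h_i := s_{i,0}^2 + \dots + s_{i,N_i}^2 \in H^0(V, \cO_V(m_i)^{\otimes 2}) = H^0(V, \cO_V(2m_i))$; this is nowhere zero on $V(R)$ since in any one-dimensional real vector space (such as $\cO_V(m_i)_v^{\otimes 2}$ at a real point $v$) a sum of squares vanishes only if each summand does.

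Next I need $\uF_2 = (F_{2,1}, \dots, F_{2,r})$ to be a regular sequence, for which, since $V$ is smooth and hence Cohen--Macaulay, it suffices that its zero locus have codimension $r$. This zero locus is the union over subsets $S \subseteq \{1,\dots,r\}$ of the intersections $\bigcap_{i \in S} V(h_i) \cap \bigcap_{i \notin S} V(F_{1,i})$, and a standard Bertini argument applied to the base-point-free linear systems $|\cO_V(2m_i)|$ shows that for generic $(h_i)$ every such piece has codimension $r$. The main, if mild, difficulty is to reconcile this Zariski-open genericity condition with the semi-algebraic open condition that each $h_i$ be nowhere zero on $V(R)$: a nonempty Zariski-open subset of a real-closed affine space is dense in the order topology, and the set of tuples nowhere zero on $V(R)$ has nonempty semi-algebraic interior (a sufficiently small perturbation of the explicit tuple of sums of squares constructed above remains nowhere zero on $V(R)$, by semi-algebraic compactness of the projective $V(R)$ and continuity). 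Hence one can choose $(h_i)$ satisfying both requirements simultaneously; for a general real closed field $R$ this density step goes through either by Tarski--Seidenberg transfer from $\R$ or by a direct semi-algebraic argument.

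Finally I would compare $W_2$ with $W$ on the open neighbourhood $U := V \setminus \bigcup_i V(h_i)$ of $V(R)$. On $U$ every $h_i$ is invertible, so
\[
\cI_{Z_2}|_U = \langle h_1 F_{1,1}, \dots, h_r F_{1,r}\rangle|_U = \langle F_{1,1}, \dots, F_{1,r}\rangle|_U = \cI_{Z_1}|_U.
\]
Taking the ideal quotient with $\cI_{W_1}$ and invoking the symmetry of linkage \cite[Proposition 1.3]{PS} applied to $W \sim_{\ul_1} W_1$, which gives $\cI_W = (\cI_{Z_1} : \cI_{W_1})$, one obtains $\cI_{W_2}|_U = (\cI_{Z_2} : \cI_{W_1})|_U = (\cI_{Z_1} : \cI_{W_1})|_U = \cI_W|_U$. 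Therefore $W_2$ and $W$ coincide on the neighbourhood $U$ of $V(R)$, as required.
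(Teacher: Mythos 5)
Your proposal is correct and follows essentially the same route as the paper: the paper also multiplies the regular sequence defining $W\sim_{\ul_1}W_1$ by sections (there, powers $v_i^{a_i}$ of general degree-$2$ deformations of $\sum_m u_m^2$) that are nowhere zero on $V(R)$, checks regularity by genericity, and concludes by the symmetry of linkage that the two ideal quotients agree on the open set where these sections are invertible. The only differences are cosmetic (sums of squares of sections of $\cO_V(m_i)$ versus powers of degree-$2$ sections, and a slightly more explicit density argument for choosing the multipliers both generic and nowhere vanishing on $V(R)$).
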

  
\begin{proof}
Let $(u_1,\dots,u_N)$ be a basis of $H^0(V,\cO_V(1))$. The section $u:=\sum_{m=1}^N u_m^2$ does not vanish on $V(R)$. Let $v_1,\dots,v_r\in H^0(V,\cO_V(2))$ be general small deformations of $u$. They are general elements of $H^0(V,\cO_V(2))$ that do not vanish on~$V(R)$.

Let $\uF=(F_1,\dots,F_r)\in H^0(V,\cI_W(\ul_1))$ be a regular sequence defining ${W\sim_{\ul_1}W_1}$. There exist integers $a_i\geq 0$ such that $\uG:=(v_1^{a_1}F_1,\dots,v_r^{a_r}F_r)\in H^0(V,\cI_{W_1}(\ul_2))$.
Since the $v_i$ are general and since $\uF$ is a regular sequence, we see that $\uG$ is also regular sequence. Let $W_1\sim_{\ul_2}W_2$ be the link it defines.

The ideal sheaves $\langle\uF\rangle$ and $\langle\uG\rangle$ of $\cO_V$ coincide in a neighbourhood of $V(R)$ since the $v_i$ do not vanish on $V(R)$. It thus follows from the symmetry of linkage (see~\S\ref{link}) that $W_2=W$ in a neighbourhood of $V(R)$.  
\end{proof}

\begin{lem}
\label{Rlinklem}
Set $W_0:= W$.
Suppose that $W_0$ is smooth along $W_0(R)$ and that $n>2d$. Fix $r$\nobreakdash-tuples of even integers $\ul_2\gg\ul_1\gg 0$. If $R=\R$, fix a neighbourhood $\cU\subset \ci(W_0(\R),V(\R))$ of the inclusion. Then there exist links $W_0\sim_{\ul_1}W_1\sim_{\ul_2}W_2$ with the following properties.
\begin{enumerate}[(i)]
\item The variety $W_2$ is smooth along $W_2(R)$.
\item Let $D(W_i)\subset W_i$ be the subset of those $x\in W_i$ such that $\pi|_{W_i}$ is not immersive at $x$. Set $d_i:=\sup_{x\in D(W_i)(R)} \dim_x D(W_i)$. If $D(W_0)(R)\neq\varnothing$, then $d_2<d_0$.
\item If $R=\R$, there exists a diffeomorphism $\phi: W_0(\R)\isoto W_2(\R)$
such that, letting ${\iota: W_2(\R)\to V(\R)}$ denote the inclusion, one has $\iota\circ\phi\in \cU$.
\end{enumerate}
\end{lem}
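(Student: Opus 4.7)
The plan is to combine the Bertini-type genericity of Propositions~\ref{propBertini}--\ref{propinj} with the reality-control of Lemma~\ref{doublerien}. First I would choose a general first link $W_0 \sim_{\ul_1} W_1$ from the family of $\ul_1$-links of \S\ref{family}; by Proposition~\ref{propBertini} combined with standard Bertini, $W_1$ is smooth and $\pi|_{W_1}$ is an embedding outside the codimension-one locus $W_0 \cap W_1 \subset W_1$, although $W_1$ itself may fail to be smooth along $W_1(R)$. Applying Lemma~\ref{doublerien} to this first link then yields a \emph{reference} second link $W_1 \sim_{\ul_2} \widetilde{W}_2$ with $\widetilde{W}_2 = W_0$ on a Zariski-open $U \supset V(R)$; let $t_0$ denote the corresponding real point of the moduli of second links from \S\ref{generic} (more precisely, of the fiber over the chosen $[W_1]$).

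The actual $W_2$ is then obtained by perturbing $t_0$ to a nearby real point $t$ lying in a suitable Zariski-dense open subset of the moduli. Because the moduli $L_2(W_0)$ is smooth and $k$-rational by \S\ref{generic}, any Zariski-dense open subset is classically dense near any of its real points, so when $R = \R$, a generic algebraic $t$ can be chosen arbitrarily close to $t_0$. For such $t$: property~(iii) follows from continuity of the universal family, which ensures that $W_2(\R)$ is $\ci$-close to $\widetilde{W}_2(\R) = W_0(\R)$ inside $V(\R)$, yielding a diffeomorphism $\phi \colon W_0(\R) \isoto W_2(\R)$ with $\iota \circ \phi$ close to the inclusion and hence lying in~$\cU$; property~(i) follows because smoothness along the real locus is an open condition near $t_0$, and the reference $\widetilde{W}_2 = W_0$ is smooth along $W_0(R)$ by hypothesis; and property~(ii) follows from a Bertini-type genericity in the spirit of \cite[Lemma~5.1.1]{Hironakasmoothing} and the proof of Proposition~\ref{propinj}.

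The main obstacle is rigorously establishing property~(ii): one must show that the Zariski-open subset of $L_2(W_0)$ parametrizing second links achieving $d_2 < d_0$ is nonempty in the relevant fiber over $[W_1]$. The essence of the argument is a Hironaka-style dimension-count parallel to the proof of Proposition~\ref{propinj}: for finitely many real points $x_s$ chosen in the irreducible components of the Zariski closure of $D(W_0)(R)$, one uses Lemma~\ref{disappear} to produce, for each $s$, a second link $W_1 \sim_{\ul_2} W_2^{(s)}$ such that $x_s \notin W_2^{(s)}$; interpreting these as points in the moduli and invoking genericity then yields a single $W_2$ with $x_s \notin W_2$ for all $s$, forcing the strict drop $d_2 < d_0$. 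The delicate compatibility issue is that this Hironaka-type genericity must survive the constraint that $t$ lie near $t_0$; this is resolved by the rationality of $L_2(W_0)$, which guarantees that any nonempty Zariski-open subset has real points classically dense near $t_0$, so that a single $t$ can simultaneously witness~(i),~(ii),~and~(iii).
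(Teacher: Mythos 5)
Your overall architecture is the same as the paper's: take a first link, use Lemma~\ref{doublerien} to build a reference second link $\widetilde{W}_2$ equal to $W_0$ near $V(R)$, view it as a real point $b$ of the moduli of second links, and perturb to a general nearby real point (Euclidean neighbourhoods of real points of the smooth irreducible moduli are Zariski-dense, so "general" and "close to $b$" are compatible). Properties (i) and (iii) are then obtained exactly as in the paper, via openness of smoothness of the proper flat family along the real locus and Ehresmann's theorem. So far, so good.

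The genuine gap is in (ii), in two respects. First, the paper removes the marked points $x_s$ from $W_1$ at the \emph{first} link: this is Lemma~\ref{disappear} with $j=0$, which applies because $W_0$ is smooth, hence a local complete intersection, at its real points. Your version tries to remove $x_s$ from $W_2$ at the \emph{second} link, which Lemma~\ref{disappear} does not deliver: applied to the chain $W_0\sim_{\ul_1}W_1$ (so $j=1$) it produces a chain of \emph{two} further links $W_1\sim W_2\sim W_3$ with $x\notin W_3$, and applying it with $j=0$ to $W_1$ would require $W_1$ to be a local complete intersection at $x_s$, which is not known. Second, and more fundamentally, even granting $x_s\notin W_2$ for all $s$, nothing follows about $d_2$: the set $D(W_2)$ is defined intrinsically in terms of $W_2$ and is in no a priori relation with $D(W_0)$, so it could acquire entirely new components of dimension $\geq d_0$ meeting $W_2(R)$. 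The paper closes this gap with two ingredients you omit: (a) Proposition~\ref{propBertini}~(ii), applied to the \emph{second} link in the universal family, shows that for general $a$ one has $D(W_2)\subset W_1\cap W_2$, i.e.\ $D(W_2)\subset E_a$ where $E=D\cap(W_1\times L_1(W_1))$; and (b) upper semicontinuity of $x\mapsto\dim_x E_{\tilde f(x)}$ together with properness, which transfers to $E_a$ the bound $\sup_{x\in E_b(R)}\dim_x E_b<d_0$ holding at the reference fibre $E_b$ --- a bound that itself is available only because $E_b\subset D(W_0)\cap W_1$ near $V(R)$ and the $x_s$ were removed from $W_1$, so that no component of $D(W_0)$ with real points lies entirely in $E_b$. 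Without this degeneration-and-semicontinuity step, "avoiding finitely many points" does not force the dimension drop.
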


\begin{proof}
Choose finitely many points $x_1,\dots,x_N\in W_0(R)$, including at least one in each irreducible component of $D(W_0)$ that has real points. By Lemma \ref{disappear}, a link $W_0\sim_{\ul_1} W_1$ corresponding to a general point of the first moduli of links of $W_0$ (in the sense of \S\ref{generic}) has the property that $x_s\notin W_1$ for $1\leq s \leq N$. Lemma \ref{doublerien}
 shows the existence of a link $W_1\sim_{\ul_2}\wW$ such that $\wW=W_0$ in a neighbourhood of~$V(R)$.
We deduce that $\wW$ is smooth along $\wW(R)=W_0(R)$.

Let $\wf:\wkW\to L_1(W_1)$ be the first moduli of links of $W_1$ with respect to the degree~$\ul_2$ (as in \S\ref{generic}) and let $b\in L_1(W_1)(R)$ be the point associated with ${W_1\sim_{\ul_2}\wW}$. Proposition~\ref{linkrel} shows that $\wkW_{b}=\wW$ and that $\wf$ is flat, hence smooth in a neighbourhood of $\wkW_b(R)$. As $\wf$ is proper, the map $\wf(R)$ is closed by \cite[Theorem~9.6]{DK2}. We deduce that there exists a Euclidean 
%BCR terminology
neighourhood $\Omega$ of $b$ in $L_1(W_1)(R)$ such that the morphism $\wf$ is smooth along $\wf(R)^{-1}(\Omega)$. 
Choose such an $\Omega$ small enough.
Since $L_1(W_1)$ is smooth and irreducible (see \S\ref{generic}), the subset $\Omega\subset L_1(W_1)$ is Zariski-dense (apply \cite[Proposition~2.8.14]{BCR}). Consequently, one may choose $a\in\Omega$ general. 
 Let $W_1\sim_{\ul_2}W_2:=\wkW_a$ be the associated link. Assertion (i) holds by our choice of $\Omega$.

 Let $D\subset \wkW$ be the closed subset of those $x\in\wkW$ such that the morphism $\pi|_{\tilde{f}^{-1}(\tilde{f}(x))}:\wkW_{\tilde{f}(x)}\to X$ is not immersive at $x$. Set $E:=D\cap (W_1\times  L_1(W_1))\subset \wkW$. By Proposition \ref{propBertini}~(ii), there is a proper closed subset $F\subset L_1(W_1)$ such that $D\subset \wf^{-1}(F)\cup (W_1\times  L_1(W_1))$ as subsets of $V\times L_1(W_1)$. Since $a$ has been chosen general, it lies outside of~$F$. We deduce the inclusion $D(W_2)\subset E_a$ of subsets of $\wkW_a$. The function $x\mapsto \dim_x(E_{\tilde{f}(x)})$ is upper semicontinuous for the Zariski topology on~$E$ by \cite[Th\'eor\`eme 13.1.3]{EGA43}, hence upper semicontinuous for the Euclidean topology on $E(R)$. Since $\wf|_E:E\to L_1(W_1)$ is proper, the map $\wf|_E(R)$ is closed by \cite[Theorem~9.6]{DK2}. If $\Omega$ has been chosen small enough, we deduce at once the inequality:
\begin{equation}
\label{ineq}
\sup_{x\in E_b(R)}\dim_x E_b\geq \sup_{x\in E_a(R)}\dim_xE_a.
\end{equation}
 As $\wkW_b=W_0$ in a neighbourhood of $V(\R)$, one has $E_b\subset D(W_0)$ in a neighbourhood of $V(\R)$. Since none of the $x_s$ belong to $W_1$, we see that no irreducible components of $D(W_0)$ that has real points is included in $E_b$. If  $D(W_0)(R)\neq\varnothing$, it follows that the left-hand side of (\ref{ineq}) is $<d_0$. On the other hand, the right-hand side of (\ref{ineq}) is $\geq d_2$ since $D(W_2)\subset E_a$. The inequality $d_2<d_0$ follows, proving (ii).

 If $R=\R$, one may assume $\Omega$ to be connected. 
Ehresmann's theorem applied to the proper submersion $\wf(\R)|_{\wf(\R)^{-1}(\Omega)}:\wf(\R)^{-1}(\Omega)\to\Omega$ yields a diffeomorphism $\psi:\Omega\times \wkW_{b}(\R)\isoto \wf(\R)^{-1}(\Omega)$ compatible with the projections to~$\Omega$. If $\Omega$ has been chosen small enough, the composition $$W_0(\R)=\wkW_{b}(\R)\xrightarrow{\psi_a}\wkW_{a}(\R)=W_2(\R)\xrightarrow{\iota}V(\R)$$ belongs to~$\cU$ for all $a\in\Omega$. Assertion~(iii) is proven.
\end{proof}

\begin{prop}
\label{Rlink}
Suppose that $W$ is smooth along $W(R)$ and that $n>2d$. Let $j\gg 0$ be even and let ${\ul_j\gg \dots \gg\ul_1\gg 0}$ be $r$-tuples of even integers. 
Define ${f_j:\kW_j\to L_j(W)}$  as in~\S\ref{generic}. Then there exists a nonempty subset ${\Omega\subset L_j(W)(R)}$ which is open for the Euclidean topology
such that the following holds for all~$b\in\Omega$.
\begin{enumerate}[(i)]
\item The variety $\kW_{j,b}$ is smooth along $\kW_{j,b}(R)$.
\item The morphism $\pi|_{\kW_{j,b}}$ is immersive along $\kW_{j,b}(R)$.
\end{enumerate}
If moreover $R=\R$ and $\cU\subset \ci(W(\R),V(\R))$ is a neighbourhood of the inclusion, one may ensure that the following holds.
\begin{enumerate}[(i)]
\setcounter{enumi}{2}
\item There exists a diffeomorphism $\phi_b: W(\R)\isoto\kW_{j,b}(\R)$
such that, letting ${\iota_b:\kW_{j,b}(\R)\to V(\R)}$ denote the inclusion, one has $\iota_b\circ\phi_b\in \cU$.
\end{enumerate}
\end{prop}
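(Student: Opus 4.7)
The plan is to iterate Lemma~\ref{Rlinklem} exactly $N := j/2$ times, using the pairs of degrees $(\ul_1,\ul_2),(\ul_3,\ul_4),\dots,(\ul_{j-1},\ul_j)$, and to package the resulting Euclidean opens in the intermediate moduli spaces of \S\ref{generic} into a single Euclidean open $\Omega \subset L_j(W)(R)$. First I would construct a base chain inductively. Setting $W^{(0)} := W$, suppose a chain $W = W^{(0)}\sim\dots\sim W^{(i)}$ of $2i$ links has been constructed with $W^{(i)}$ smooth along $W^{(i)}(R)$. Applying Lemma~\ref{Rlinklem} to $W^{(i)}$ with the pair $(\ul_{2i+1},\ul_{2i+2})$ yields a continuation $W^{(i)}\sim W^{(i)}_1 \sim W^{(i+1)}$ with $W^{(i+1)}$ smooth along its real locus; setting $d_i := \sup_{x\in D(W^{(i)})(R)} \dim_x D(W^{(i)})$, with convention $d_i = -\infty$ when $D(W^{(i)})(R)=\varnothing$, part~(ii) of the Lemma gives $d_{i+1} < d_i$ as long as $d_i \geq 0$. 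Since $d_0 \leq d$, taking any even $j \geq 2(d+1)$ forces $d_N = -\infty$, i.e., $\pi|_{W^{(N)}}$ is immersive along $W^{(N)}(R)$. This produces a base point $b^\star \in L_j(W)(R)$ at which conclusions~(i) and~(ii) already hold.

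Next I would spread $b^\star$ into a Euclidean open $\Omega$. Both~(i) and~(ii) are Euclidean-open on the base: the relative singular locus of $f_j : \kW_j \to L_j(W)$ and the locus where $\pi \circ f_j$ fails to be immersive are closed in $\kW_j$, their projections under the proper map $f_j(R)$ are closed by \cite[Theorem~9.6]{DK2}, and by construction they avoid $\kW_{j,b^\star}(R)$. Hence (i) and~(ii) hold on some Euclidean open neighborhood of~$b^\star$, which I take as the initial $\Omega$.

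For (iii) when $R = \R$, the plan is to choose nested open neighborhoods $\cU = \cU_N \supset \cU_{N-1} \supset \dots \supset \cU_0$ of the inclusion so that, applying Lemma~\ref{Rlinklem}~(iii) inductively with $\cU_i$ at the $i$-th step, the composed diffeomorphism $\phi^\star : W(\R) \isoto W^{(N)}(\R)$ satisfies $\iota^\star \circ \phi^\star \in \cU$. Then, after possibly shrinking $\Omega$ to a connected Euclidean open neighborhood of $b^\star$ along which $f_j(\R)$ is smooth, Ehresmann's theorem applied to the proper submersion $f_j(\R)|_{f_j(\R)^{-1}(\Omega)}$, exactly as in the proof of Lemma~\ref{Rlinklem}, furnishes diffeomorphisms $\phi_b : W(\R) \isoto \kW_{j,b}(\R)$ depending continuously on $b$; by continuity and a final shrinking of $\Omega$, $\iota_b \circ \phi_b$ remains in~$\cU$ for all $b \in \Omega$.

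The main obstacle is coordinating the three conclusions over a common Euclidean open $\Omega$ in the iterated moduli $L_j(W)(R)$. This requires identifying, at the inductive step, the Euclidean open of $L_2(W^{(i)})(R)$ produced by Lemma~\ref{Rlinklem} with a Euclidean open of the fiber of the projection $L_{2(i+1)}(W)(R) \to L_{2i}(W)(R)$ over $b^\star_{2i}$, which is immediate from the recursive construction of \S\ref{generic}: each $L_{i}(W)$ is built from $L_{i-1}(W)$ by the same geometric-vector-bundle construction applied in Lemma~\ref{Rlinklem}. Combined with the properness and flatness already exploited in that lemma's proof, this upgrades the fiberwise Euclidean openness into openness in $L_j(W)(R)$ and yields the desired $\Omega$.
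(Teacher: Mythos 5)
Your proposal is correct and follows essentially the same route as the paper: iterate Lemma~\ref{Rlinklem} $j/2$ times (using the strict decrease of the $d_i$ to reach immersivity after at most $d+1$ rounds), take the point of $L_j(W)(R)$ corresponding to the resulting chain, and spread conclusions (i)--(iii) to a Euclidean neighbourhood via closedness of $f_j(R)$ (properness), flatness from Proposition~\ref{linkrel}, and Ehresmann's theorem. The final paragraph about identifying fiberwise opens in intermediate moduli is unnecessary --- as in your own second step, one simply takes the neighbourhood directly in $L_j(W)(R)$ around the base point at the end --- but it introduces no error.
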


\begin{proof}
Choose $j\geq 2d+2$ even. Applying $j/2$ times Lemma \ref{Rlinklem} shows the existence of a chain of linked subvarieties $W=W_0\sim_{\ul_1} W_1\sim_{\ul_2}\dots\sim_{\ul_j}W_j$ of~$V$ such that $W_j$ is smooth along $W_j(R)$ and such that $\pi|_{W_j}$ is immersive along $W_j(R)$. Moreover, if $R=\R$, Lemma \ref{Rlinklem} ensures the existence of a diffeomorphism $\phi:W(\R)\isoto W_j(\R)$  such that, letting $\iota:W_j(\R)\to V(\R)$ denote the inclusion, one has $\iota\circ\phi\in\cU$.

Let $a\in L_j(W)(R)$ be the point associated with $W=W_0\sim_{\ul_1} W_1\sim_{\ul_2}\dots\sim_{\ul_j}W_j$.
 Proposition~\ref{linkrel} shows that $\kW_{j,a}=W_j$ and that $f_j$ is flat, hence smooth in a neighbourhood of $\kW_{j,a}(R)$. As $f_j$ is proper, the map $f_j(R)$ is closed by \cite[Theorem~9.6]{DK2}. We deduce the existence of a
neighourhood $\Omega$ of $a$ in $L_j(W)(R)$ such that $f_j$ is smooth along $f_j(R)^{-1}(\Omega)$. Assertion (i) follows. So does assertion~(ii) after maybe shrinking $\Omega$. If $R=\R$, that assertion (iii) holds after shrinking $\Omega$ further follows from Ehresmann's theorem applied to the proper submersion $f_j(\R)|_{f_j(\R)^{-1}(\Omega)}:f_j(\R)^{-1}(\Omega)\to\Omega$.
\end{proof}

 \section{Smoothing by linkage}
 \label{sectionapprox}
 
 Let us apply linkage theory as developed in \S\ref{linkage} to prove Theorems \ref{Chow1}, \ref{Chow2} and~\ref{thC1}.

  \subsection{Main statement}
  
  Here is the technical result from which the main theorems of \S\ref{sectionapprox} will follows.
  
   \begin{prop}
   \label{mainapprox}
 Let $g:W\to X$ be a morphism of smooth projective varieties over a real closed field $R$. Let $d$ and $n$ be the dimensions $W$ and $X$ and assume that $n>2d$. If $R=\R$, fix a neighbourhood $\cV\subset\ci(W(\R),X(\R))$ of $g(\R)$. Then there exists a closed subvariety $i:Y\hookrightarrow X$ of dimension $d$ with the following properties.
 \begin{enumerate}[(i)]
 \item The variety $Y$ is smooth along $Y(R)$.
 \item If $d\leq 3$, then $Y$ is smooth.
 \item The class $g_*[W]-[Y]\in\CH_d(X)$ is a linear combination of classes of smooth closed subvarieties of $X$ with empty real loci.
  \item If $R=\R$, there is a diffeomorphism $\psi: W(\R)\isoto Y(\R)$ such that $i(\R)\circ\psi\in\cV$.
 \end{enumerate}
 \end{prop}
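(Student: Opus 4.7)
The plan is to realize $g$ as the restriction of a smooth projection $\pi\colon V\to X$, produce $Y$ by applying the linkage propositions of \S\ref{morphism}--\S\ref{realpar} to the embedding $W\hookrightarrow V$, and handle the residual Chow class by a direct computation in $V=X\times\P^N_R$.

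\textbf{Setup and production of $Y$.} Fix a very ample $\cO_X(1)$ on $X$ and a closed embedding $\iota\colon W\hookrightarrow\P^N_R$ for some $N\gg 0$; set $V:=X\times\P^N_R$, $\cO_V(1):=\cO_X(1)\boxtimes\cO_{\P^N_R}(1)$, and let $\pi\colon V\to X$ be the smooth first projection. Embed $W\hookrightarrow V$ via the graph $(g,\iota)$, so that $\pi|_W=g$. The hypothesis $n>2d$ gives $r:=\dim V-d=n+N-d>d$. Choose $j\gg 0$ even and even tuples $\ul_j\gg\dots\gg\ul_1\gg 0$. Proposition~\ref{Rlink} provides a nonempty Euclidean-open $\Omega\subset L_j(W)(R)$ over which the fibre $W_j:=\kW_{j,b}$ is smooth along $W_j(R)$, $\pi|_{W_j}$ is immersive along $W_j(R)$, and (when $R=\R$) there is a diffeomorphism $\phi\colon W(\R)\isoto W_j(\R)$ making $\iota_b\circ\phi$ lie in any prescribed neighbourhood of the inclusion $W(\R)\hookrightarrow V(\R)$. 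Proposition~\ref{propinj} (geometric injectivity) and, when $d\leq 3$, Proposition~\ref{Hironaka} (closed embedding with $W_j$ smooth) hold on Zariski-dense open subsets of $L_j(W)$; since $\Omega$ is Zariski-dense in the smooth irreducible variety $L_j(W)$ by \cite[Proposition~2.8.14]{BCR}, one may pick $b\in\Omega$ satisfying all three. Set $Y:=\pi(W_j)$. Geometric injectivity combined with properness forces $\pi(W_j(R))=Y(R)$, and together with immersion along $W_j(R)$ shows that $\pi|_{W_j}$ is a closed embedding on a Zariski neighbourhood of $W_j(R)$, giving (i) and, when $d\leq 3$, (ii). For (iv), take $\psi$ to be $\phi$ composed with the induced diffeomorphism $W_j(\R)\isoto Y(\R)$, and shrink the neighbourhood so that post-composition with $\pi(\R)$ lands in $\cV$.

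\textbf{Chow-class correction.} Since generic linkage puts $W_{i-1}\cap W_i$ in codimension $>r$ and these subvarieties share no components, $[Z_i]=[W_{i-1}]+[W_i]$ in $\CH_d(V)$, where $Z_i$ is the complete intersection defining the $i$-th link. Alternating-summing (with $j$ even) and pushing forward with $\pi_*[W_j]=[Y]$ (since $\pi|_{W_j}$ is birational onto $Y$) gives
\[
g_*[W]-[Y]=\sum_{i=1}^{j}(-1)^{i+1}\pi_*[Z_i]\quad\text{in }\CH_d(X).
\]
Writing $c_1(\cO_V(1))=H_X+H_{\P^N}$ for the pullbacks of the hyperplane classes, one has $[Z_i]=\big(\prod_kl_{i,k}\big)(H_X+H_{\P^N})^r$, and $\pi_*(H_X^aH_{\P^N}^b)=\delta_{b,N}H_X^a$ yields $\pi_*[Z_i]=\big(\prod_kl_{i,k}\big)\binom{r}{N}H_X^{n-d}$. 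Hence $g_*[W]-[Y]=m\,H_X^{n-d}$ for an integer $m$ divisible by $2$, since all $l_{i,k}$ are even. Finally, represent $2H_X^{n-d}$ by a smooth closed subvariety with empty real locus: take $s_0\in H^0(X,\cO_X(2))$ a generic sum of squares of sections of $\cO_X(1)$ (nowhere vanishing on $X(R)$) with $\{s_0=0\}$ smooth, and general $s_1,\dots,s_{n-d-1}\in H^0(X,\cO_X(1))$; the complete intersection $\{s_0=s_1=\dots=s_{n-d-1}=0\}$ is then smooth of class $2H_X^{n-d}$ with empty real locus. This settles (iii).

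\textbf{Main obstacle.} The principal difficulty is arranging Propositions~\ref{Hironaka}, \ref{propinj}, and \ref{Rlink} on a single chain of links by combining Zariski-open and Euclidean-open conditions on the moduli $L_j(W)$ of \S\ref{generic}, and ensuring the parity of the linking degrees so that the residual class $mH_X^{n-d}$ becomes divisible by $2$ and hence representable by combinations of smooth subvarieties with empty real loci.
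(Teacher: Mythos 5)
Your proposal is correct and follows essentially the same route as the paper: embed $W$ in a smooth $\pi:V\to X$ with $\pi|_W=g$, pick a general point of the Euclidean-open, Zariski-dense set $\Omega\subset L_j(W)(R)$ from Proposition~\ref{Rlink} so that Propositions~\ref{propinj} and (for $d\leq 3$) \ref{Hironaka} hold simultaneously, and identify $g_*[W]-[Y]$ as an even multiple of a power of a hyperplane class, represented by a sum-of-squares complete intersection with empty real locus. The only cosmetic differences are your choice $V=X\times\P^N_R$ instead of the paper's $V=X\times W$, and your explicit computation of $\pi_*[Z_i]$ where the paper instead invokes Proposition~\ref{propBertini} with general deformations of $\sum_m u_m^2$.
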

 
 \begin{proof}
 Define $V:=X\times W$, let $\pi:V\to X$ be the first projection and let $\cO_V(1)$ be a very ample line bundle on $V$. Let $r$ be the codimension of the closed embedding $(g,\Id):W\hookrightarrow V$. If $R=\R$, define $\cU:=\{\xi\in\ci(W(\R),V(\R))\,|\,\pi(\R)\circ\xi\in\cV\}$, which is a neighbourhood of the inclusion $W(\R)\hookrightarrow V(\R)$ by \cite[Theorem~11.4]{Michor}.
 
 Choose  an even integer $j\gg 0$ and $r$-tuples of even integers ${\ul_j\gg \dots \gg \ul_{1}\gg0}$. Let ${f_j:\kW_j\to L_j(W)}$ be the $j$-th moduli of links of $W$ as in~\S\ref{generic}, and choose $\Omega\subset L_j(W)(R)$ be as in Proposition \ref{Rlink}.
  Since $L_j(W)$ is smooth and irreducible (see \S\ref{generic}), the subset $\Omega\subset L_j(W)$ is Zariski-dense (apply \cite[Proposition~2.8.14]{BCR}). Consequently, one may choose a general point $b\in\Omega$. Define ${Y:=\pi(\kW_{j,b})}$ with inclusion $i:Y\hookrightarrow X$. By Proposition \ref{propinj}, the morphism $\pi|_{\kW_{j,b}}:\kW_{j,b}\to Y$ is geometrically injective. By Proposition \ref{Rlink} (i)-(ii), in a neighbourhood of $\kW_{j,b}(R)$, the variety $\kW_{j,b}$ is smooth and the morphism $\pi|_{\kW_{j,b}}$ is immersive. These facts show that $\theta:=\pi(R)|_{\kW_{j,b}(R)}:\kW_{j,b}(R)\to Y(R)$ is bijective and that $Y$ is smooth along the image of $\theta$. This proves (i). The argument also shows that $\theta$ is a diffeomorphism.  
Consequently, if $R=\R$, one may take $\psi:=\theta\circ\phi_b$, where $\phi_b$ is as in Proposition~\ref{Rlink}~(iii), which proves (iv). If $d\leq 3$, Proposition~\ref{Hironaka} shows that $\kW_{j,b}$ is smooth and that $\pi|_{\kW_{j,b}}$ is a closed immersion, proving~(ii). 

It remains to prove (iii). The chain of links relating $W$ and $\kW_{j,b}$ shows that the difference $[W]-[\kW_{j,b}]\in\CH_d(V)$ is a multiple of $(2\lambda)^r\in\CH^r(V)=\CH_d(V)$, where $\lambda:=c_1(\cO_V(1))\in\CH^1(V)$. Consequently, $\pi_*[W]-\pi_*[\kW_{j,b}]=g_*[W]-[Y]$ is a multiple of $\pi_*((2\lambda)^r)\in\CH_d(X)$. Let $(u_1,\dots,u_N)$ be a basis of $H^0(V,\cO_V(1))$. Proposition \ref{propBertini} applied with $W=\varnothing$, with $l_i=2$, and with the $F_i$ chosen to be general small deformations of $\sum_{m=1}^Nu_m^2$, shows that $\pi_*((2\lambda)^r)$ is the class of a smooth closed subvariety of $X$ with empty real locus, which concludes.
\end{proof}
 
\begin{rem}
Over a general real closed field, one could replace Proposition \ref{mainapprox} (iv) by the assertion that there exists a Nash diffeomorphism $\psi:W(R)\isoto Y(R)$ such that $g(R):W(R)\to X(R)$ and $i(R)\circ \psi:Y(R)\to X(R)$ are Nash homotopic.
The proof is identical, replacing the use of Ehresmann's theorem in the proofs of Lemma~\ref{Rlinklem}~(iii) and Proposition \ref{Rlink} by its Nash analogue proven by Coste and Shiota \cite[Theorem 2.4 (iii)']{CS}.
\end{rem}

 \subsection{Low-dimensional cycles}
 
 We first give applications to the Chow groups of smooth projective varieties over real closed fields.
 
 \begin{thm}
 \label{Chowth}
 Let $X$ be a smooth projective variety of dimension $n$ over a real closed field $R$. For $n>2d$, the Chow group $\CH_d(X)$ is generated by classes of closed integral subvarieties of~$X$ which are smooth along their real loci.
 \end{thm}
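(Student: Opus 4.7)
The plan is to reduce Theorem \ref{Chowth} directly to Proposition \ref{mainapprox}, using resolution of singularities as a bridge. First I would pick an arbitrary class in $\CH_d(X)$ and write it as a $\Z$-linear combination of classes $[Z]$ of integral $d$-dimensional closed subvarieties of $X$, so that it suffices to show each such $[Z]$ belongs to the subgroup $G\subset\CH_d(X)$ generated by classes of integral closed subvarieties of $X$ that are smooth along their real loci.

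Next, since $R$ has characteristic zero, I would invoke Hironaka's resolution of singularities to obtain a projective birational morphism $h\colon W\to Z$ with $W$ smooth projective integral of dimension $d$. Composing with the inclusion $Z\hookrightarrow X$ yields a morphism $g\colon W\to X$ with $g_*[W]=[Z]$. The hypothesis $n>2d$ then allows me to apply Proposition \ref{mainapprox} to $g$, producing a closed subvariety $i\colon Y\hookrightarrow X$ of dimension $d$, smooth along $Y(R)$, with the property that $[Z]-[Y]\in\CH_d(X)$ is a $\Z$-linear combination of classes of smooth closed subvarieties of $X$ with empty real loci.

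The remaining point is to break the two outputs of Proposition \ref{mainapprox} into integral components. Writing $Y=\bigcup_\alpha m_\alpha Y_\alpha$ with $Y_\alpha$ the reduced irreducible components and $m_\alpha\geq 1$ the corresponding multiplicities, smoothness of $Y$ at the real points, combined with the inclusion $Y_\alpha(R)\subseteq Y(R)$ and the local irreducibility of smooth schemes, forces each $Y_\alpha$ to be integral and smooth along $Y_\alpha(R)$; in particular $[Y]=\sum_\alpha m_\alpha[Y_\alpha]\in G$. Likewise, any smooth closed subvariety $T\subset X$ with $T(R)=\varnothing$ is a disjoint union of smooth integral varieties $T_\beta$, each with $T_\beta(R)=\varnothing$ and therefore trivially smooth along its real locus, so its class lies in $G$. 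Combining these decompositions shows $[Z]\in G$, completing the argument.

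I expect that essentially all of the technical content is absorbed into Proposition \ref{mainapprox}, which itself rests on the linkage machinery of \S\ref{linkage}; the reduction above is routine, and the only mildly delicate point is the component-by-component verification that being smooth along the real locus is inherited by irreducible components, which is forced by local irreducibility at smooth points together with $Y_\alpha(R)\subseteq Y(R)$.
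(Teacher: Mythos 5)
Your proposal is correct and follows essentially the same route as the paper: resolve the singularities of an integral subvariety $Z$, apply Proposition \ref{mainapprox} to the resulting morphism $g\colon W\to X$ with $n>2d$, and observe that both $[Y]$ and the correcting classes of smooth subvarieties decompose into classes of integral subvarieties smooth along their real loci. The paper states this in two sentences; your extra care with the decomposition of $Y$ into irreducible components (using local irreducibility at smooth real points) just makes explicit a step the paper leaves implicit.
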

 
 \begin{proof}
 Let $Z\subset X$ be a closed integral subvariety of $X$, let $W\to Z$ be a resolution of singularities, and let $g:W\to X$ be the induced morphism. Proposition~\ref{mainapprox} furnishes a closed subvariety $Y\subset X$ which is smooth along $Y(R)$ and such that $g_*[W]-[Y]\in\CH_d(X)$ is a linear combination of classes of smooth closed subvarieties of $X$. This proves the theorem.
 \end{proof}
 
 \begin{thm}
 \label{Chowth2}
 Let $X$ be a smooth projective variety of dimension $n$ over $\R$. For $n>2d$, the group $\Ker \big[\cl_{\R}:\CH_d(X)\to H_d(X(\R),\Z/2)\big]$ is generated by classes of closed integral subvarieties of~$X$ with empty real loci.
 \end{thm}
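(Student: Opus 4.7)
The plan is to combine the theorem of Ischebeck and Sch\"ulting with Proposition~\ref{mainapprox}, using a dimension argument to ensure that a suitably chosen resolution has empty real locus. By \cite[Main Theorem 4.3]{IS}, the kernel $\Ker\big[\cl_{\R}:\CH_d(X)\to H_d(X(\R),\Z/2)\big]$ is generated by classes $[Z]$ of integral closed subvarieties $Z\subset X$ of dimension $d$ whose real locus $Z(\R)$ is not Zariski-dense in~$Z$. It therefore suffices to show that any such class $[Z]$ is itself an integer linear combination of classes of integral closed subvarieties of $X$ with empty real loci.

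Fix such a $Z$. Since $Z$ is integral of dimension $d$ and $Z(\R)$ is not Zariski-dense, every real point of $Z$ must be singular: at a smooth real point $x$, the real locus would be a $\ci$ manifold of pure dimension $d$ near $x$, contradicting the fact that $\dim Z(\R)<d$. Hence $Z(\R)\subset Z_{\mathrm{sing}}$. I would then choose a resolution of singularities $\pi:W\to Z$ which is an isomorphism over $Z\setminus Z_{\mathrm{sing}}$ (available over $\R$ by Hironaka), and let $g:W\to X$ denote the composition of $\pi$ with the inclusion $Z\hookrightarrow X$. The exceptional locus $E\subset W$ is a proper closed subset of an irreducible $d$-dimensional variety, hence $\dim E\leq d-1$. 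Any real point $p\in W(\R)$ satisfies $\pi(p)\in Z(\R)\subset Z_{\mathrm{sing}}$, so $p\in E$; thus $W(\R)\subset E(\R)$ and $\dim_{\R} W(\R)\leq d-1$. On the other hand, since $W$ is smooth of dimension $d$, its real locus is either empty or a smooth manifold of pure dimension $d$. The only possibility is $W(\R)=\varnothing$.

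Finally, I would apply Proposition~\ref{mainapprox} to $g:W\to X$. This produces a closed subvariety $Y\subset X$ of dimension $d$, smooth along $Y(\R)$, such that $g_*[W]-[Y]\in\CH_d(X)$ is an integer linear combination of classes of smooth closed subvarieties of $X$ with empty real loci, and such that $Y(\R)$ is diffeomorphic to $W(\R)=\varnothing$; in particular $Y(\R)=\varnothing$. Since $\pi$ is birational, $g_*[W]=[Z]$, so $[Z]$ equals $[Y]$ plus an integer linear combination of classes of integral closed subvarieties with empty real loci. Decomposing $[Y]$ into its integral components—each of which has empty real locus, since $Y(\R)=\varnothing$—then exhibits $[Z]$ in the desired form.

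The crux of this proof is the dimension argument of the second paragraph forcing $W(\R)=\varnothing$: the resolution turns a variety that \emph{has} real points (but not enough of them to be Zariski-dense) into one with no real points at all, by pushing $Z(\R)$ into the exceptional locus, which is too low-dimensional to contain a smooth real manifold of dimension $d$. Once this is established, Proposition~\ref{mainapprox} furnishes everything else essentially for free.
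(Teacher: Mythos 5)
Your proposal is correct and follows the paper's proof: reduce via Ischebeck--Sch\"ulting to subvarieties $Z$ with non-Zariski-dense real locus, resolve to get $g:W\to X$, observe $W(\R)=\varnothing$, and apply Proposition~\ref{mainapprox}. The only (cosmetic) difference is your derivation of $W(\R)=\varnothing$ via $Z(\R)\subset Z_{\mathrm{sing}}$ and a dimension count on the exceptional locus, where the paper invokes directly that the real locus of a smooth irreducible variety over $\R$ is empty or Zariski-dense.
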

 
 \begin{proof}
 Ischebeck and Sch\"ulting \cite[Main Theorem 4.3]{IS} have shown that the group $\Ker \big[\cl_{\R}:\CH_d(X)\to H_d(X(\R),\Z/2)\big]$ is generated by classes of closed integral subvarieties of~$Z\subset X$ with the property that $Z(\R)$ is not Zariski-dense in $Z$.
 Let $W\to Z$ be a resolution of singularities of such a subvariety, and let $g:W\to X$ be the induced morphism. 
Since the real locus of a smooth irreducible variety over $\R$ is empty or Zariski-dense,
 %apply Silhol I, (1.14)?
one has $W(\R)=\varnothing$. 

By Proposition~\ref{mainapprox}, there is a closed subvariety $Y\subset X$ with $Y(\R)\simeq W(\R)=\varnothing$ and such that $g_*[W]-[Y]\in\CH_d(X)$ is a linear combination of classes of closed subvarieties of $X$ with empty real loci. This concludes the proof.
\end{proof}

\begin{rem}
\label{remrealclosed}
The proof of Theorem \ref{Chowth2} does not extend to general real closed fields because of  its use of \cite[Main Theorem~4.3]{IS}. As a matter of fact, the statement of Theorem \ref{Chowth2} does not hold over the real closed field $R:=\cup_n\R((t^{1/n}))$, for any $d>0$, as we show in the next proposition.
\end{rem}

 \begin{prop}
 \label{realclosed}
 For all $c,d\geq 1$, there exist a smooth projective variety  $X_{c,d}$ of dimension $c+d$ over~$R:=\cup_n\R((t^{1/n}))$ and a class $\beta_{c,d}\in\CH_d(X_{c,d})$ such that:
 \begin{enumerate}[(i)]
 \item One has $\cl_R(\beta_{c,d})=0\in H_d(X_{c,d}(R),\Z/2)$.
 \item For all identities $\beta_{c,d}=\sum_{i\in I} n_i[Z_i]\in\CH_d(X_{c,d})$ with $n_i\in\Z$ and $Z_i\subset X$ integral, there exists $i\in I$ such that
 $n_i$ is odd and $Z_i(\R)$ is Zariski-dense in~$Z_i$.
 \end{enumerate}
\end{prop}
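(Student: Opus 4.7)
The plan is to reduce the general case to the case $c=d=1$, where the assertion is precisely the failure of Br\"ocker's EPT theorem over $R$ recorded in~\cite[\S 9.4.1]{BW2}. That reference supplies a smooth projective surface $X_0$ over $R$ and a class $\alpha_0\in\CH^1(X_0)$ with $\cl_R(\alpha_0)=0$ but with $\alpha_0\notin 2\CH^1(X_0)+\langle [D]\mid D\subset X_0\text{ integral},\ D(R)\text{ not Zariski-dense in }D\rangle$; taking $(X_{1,1},\beta_{1,1}):=(X_0,\alpha_0)$ settles that case.

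For general $c,d\geq 1$, I would set $X_{c,d}:=X_0\times(\P^1_R)^{c+d-2}$ and, writing $p_0$ for the projection to $X_0$ and $p_1,\ldots,p_{c+d-2}$ for the projections to the factors of $\P^1_R$, define
\[
\beta_{c,d}\;:=\;p_0^*\alpha_0\cdot p_1^*[q_1]\cdot p_2^*[q_2]\cdots p_{c-1}^*[q_{c-1}]\;\in\;\CH_d(X_{c,d}),
\]
for any $R$-points $q_1,\ldots,q_{c-1}\in\P^1_R(R)$. Property~(i) is immediate from the compatibility of $\cl_R$ with flat pullback and external products, together with $\cl_R(\alpha_0)=0$.

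Property~(ii) will be proved by a two-step reduction. First, I would push forward along the projection $\pi:X_{c,d}\to\tilde X:=X_0\times(\P^1_R)^{d-1}$ that forgets the first $c-1$ factors of $\P^1_R$; the projection formula gives $\pi_*\beta_{c,d}=\tilde\beta:=\tilde p_0^*\alpha_0$, a codimension-one class on $\tilde X$. Second, I would pull back along the section $s:X_0\to\tilde X$ defined by $s(x):=(x,q'_1,\ldots,q'_{d-1})$ for $R$-points $q'_i\in\P^1_R(R)$ chosen generically; then $s^*\tilde\beta=\alpha_0$. The core point is that both $\pi_*$ and $s^*$ respect the subgroup $N:=2\CH+\langle [Z]\mid Z\text{ integral},\ Z(R)\text{ not Zariski-dense in }Z\rangle$ of the relevant Chow group: a hypothetical $\beta_{c,d}\in N$ then yields $\tilde\beta\in N$ by the first step and $\alpha_0\in N$ by the second, contradicting the defining property of $\alpha_0$.

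For $s^*$, this respect of $N$ is a semi-algebraic dimension count: if $W\subset\tilde X$ is an integral divisor with $\dim W(R)<d$, then for generic $q'$ the real locus $s^{-1}(W)(R)=W(R)\cap(X_0(R)\times\{q'\})$ has dimension at most $\dim W(R)-(d-1)<1$, so every irreducible component of $s^{-1}(W)$ is a divisor on $X_0$ with $0$-dimensional real locus. I expect the main obstacle to be the analogous statement for $\pi_*$: given integral $Z\subset X_{c,d}$ with $Z(R)$ not Zariski-dense in $Z$, one must show $\pi_*[Z]\in N$. The only delicate case is when $\pi|_Z$ is generically finite of some degree $m$ and $\pi(Z)(R)$ is Zariski-dense in $\pi(Z)$; then $\dim\pi(Z(R))\leq\dim Z(R)<\dim\pi(Z)=\dim\pi(Z)(R)$, so a generic $x\in\pi(Z)(R)$ lies outside $\pi(Z(R))$, and the $m$ geometric points of $\pi|_Z^{-1}(x)$ are freely permuted by $\Gal(\overline{R}/R)\cong\Z/2$, forcing $m$ to be even and therefore $\pi_*[Z]=m[\pi(Z)]\in 2\CH_d(\tilde X)\subset N$.
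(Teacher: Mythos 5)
Your argument follows the same route as the paper's: reduce to the $c=d=1$ case supplied by \cite[\S 9.4.1]{BW2}, take a product with projective spaces, and recover $\beta_{1,1}$ from a hypothetical decomposition of $\beta_{c,d}$ by slicing along a general real point and pushing forward --- the paper uses $X_{1,1}\times\P^{c-1}_R\times\P^{d-1}_R$ and the single identity $\beta_{1,1}=(pr_1)_*(\beta_{c,d}\cdot pr_3^*[y])$, which is exactly your composite $s^*\circ\pi_*$ in the other order. Your verification that this correspondence preserves the relevant subgroup (the $\Gal(\overline{R}/R)$ parity count on fibers for $\pi_*$, and the semialgebraic fiber-dimension count for $s^*$) is correct and simply makes explicit what the paper asserts in one line.
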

 
 \begin{proof}
 Such $X_{1,1}$ and $\beta_{1,1}$ have been constructed in \cite[Propositions 9.17 and~9.19~(ii)]{BW2} (they form a counterexample to Br\"ocker's EPT 
 %called that way in Scheiderpurity and in Ischebeck, On real one-dimensional cycles
theorem over the field $R$).
 %\cite[Appendix, Proposition]{Scheidererpurity}
Let $x\in \P^{c-1}(R)$ and $y\in \P^{d-1}(R)$ be general points.
 One may then define $X_{c,d}:=X_{1,1}\times \P^{c-1}_R\times \P^{d-1}_R$ and $\beta_{c,d}:=pr_1^*\beta_{1,1}\cdot pr_2^*[x]$. The required property of $(X_{c,d},\beta_{c,d})$ follows from that of $(X_{1,1},\beta_{1,1})$, and from the equation $\beta_{1,1}=(pr_1)_*(\beta_{c,d}\cdot pr_3^*[y])$.
 %see last two paragraphs of IS,\S 3
 \end{proof}

 \subsection{Approximation of submanifolds}
 \label{parapp}
 
 We now give an application to the existence of algebraic approximations for submanifolds of the real locus of a smooth projective variety~$X$ over $\R$. 
 We refer to \S\ref{approximation} for the definition of the algebraic cobordism group $MO^{\alg}_d(X(\R))$.
  
  \begin{thm}
  \label{approxth}
  Let $X$ be a smooth projective variety of dimension $n$ over $\R$, and let $j:M\hookrightarrow X(\R)$ be a closed $\ci$ submanifold of dimension $d$. If $n>2d$, the following properties are equivalent.
  \begin{enumerate}[(i)]
  \item\label{i}
One has $[j]\in MO^{\alg}_d(X(\R))$.
  \item  \label{ii}
  For all neighbourhoods $\cU\subset\ci(M,X(\R))$ of $j$, there exist a closed $d$-dimen\-sional subvariety $i:Y\hookrightarrow X$ smooth along $Y(\R)$ and a diffeomorphism ${\phi:M\isoto Y(\R)}$ such that $i(\R)\circ\phi\in\cU$.
  \end{enumerate}
  If moreover $d\leq 3$, one may choose $Y$ to be smooth in assertion (ii).
   \end{thm}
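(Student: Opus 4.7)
The plan is to prove (\ref{ii})~$\Rightarrow$~(\ref{i}) by resolution of singularities, and (\ref{i})~$\Rightarrow$~(\ref{ii}) by combining the Akbulut--King relative Nash--Tognoli theorem with Proposition~\ref{mainapprox}.

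For (\ref{ii})~$\Rightarrow$~(\ref{i}), pick $Y$ and a diffeomorphism $\phi:M\isoto Y(\R)$ furnished by (\ref{ii}) for any neighborhood of $j$, and let $h:W\to Y$ be a resolution of singularities that is an isomorphism over a Zariski neighborhood of $Y(\R)$ in $Y$; such an $h$ exists because $Y$ is smooth along $Y(\R)$. Then $W$ is smooth projective of dimension~$d$, the restriction $h(\R):W(\R)\isoto Y(\R)$ is a diffeomorphism, and the composition $g:=i\circ h:W\to X$ is a morphism of smooth projective varieties such that $g(\R)$, precomposed with the diffeomorphism $\chi:=h(\R)^{-1}\circ\phi:M\isoto W(\R)$, equals $i(\R)\circ\phi$. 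Hence $[j]=[g(\R)\circ\chi]=[g(\R)]\in MO_d^{\alg}(X(\R))$.

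For (\ref{i})~$\Rightarrow$~(\ref{ii}), fix a neighborhood $\cU\subset\ci(M,X(\R))$ of $j$. The Akbulut--King relative Nash--Tognoli theorem (\cite[Proposition~0.2]{AKIHES} or \cite[Theorem~2.8.4]{AKbook}), applied to the hypothesis $[j]\in MO_d^{\alg}(X(\R))$ with target neighborhood $\cU$, produces a smooth projective variety $W$ of dimension $d$ over $\R$, a morphism $g:W\to X$, and a diffeomorphism $\chi:M\isoto W(\R)$ such that $g(\R)\circ\chi\in\cU$. Define the open neighborhood $\cV:=\{\xi\in\ci(W(\R),X(\R)):\xi\circ\chi\in\cU\}$ of $g(\R)$. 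Apply Proposition~\ref{mainapprox} to $g:W\to X$ with the neighborhood $\cV$ (the hypothesis $n>2d$ being our standing one) to obtain a closed subvariety $i:Y\hookrightarrow X$ of dimension $d$ smooth along $Y(\R)$, together with a diffeomorphism $\psi:W(\R)\isoto Y(\R)$ satisfying $i(\R)\circ\psi\in\cV$. Setting $\phi:=\psi\circ\chi:M\isoto Y(\R)$, one finds $i(\R)\circ\phi=(i(\R)\circ\psi)\circ\chi\in\cU$, as required. If $d\leq 3$, Proposition~\ref{mainapprox}~(ii) lets us take $Y$ globally smooth.

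All the substantive content sits in the two cited inputs. The Akbulut--King theorem approximates $j$ by a morphism from a smooth variety ``up to unwanted singular points'', and Proposition~\ref{mainapprox}---whose proof via the linkage machinery of \S\ref{linkage} is where the real effort lies---cleans up those singular points while preserving the diffeomorphism type of the real locus. The argument here merely orchestrates these two results, together with a routine choice of neighborhoods in the weak $\ci$ topology to ensure that the final composition lands in $\cU$.
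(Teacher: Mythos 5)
Your proof is correct and follows essentially the same route as the paper: Akbulut--King plus Proposition~\ref{mainapprox} for (i)~$\Rightarrow$~(ii), and resolution of singularities over $Y(\R)$ for the converse. The only step left implicit is the justification of $[j]=[i(\R)\circ\phi]$ in $MO_d(X(\R))$, which requires choosing $\cU$ small enough that every map in it is homotopic (hence cobordant) to $j$; the paper cites Wall, Proposition~4.4.4 for this.
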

 
 \begin{proof}
 Assume that (i) holds and let $\cU$ be as in (ii). A relative variant of the Nash--Tognoli theorem  (see \cite[Proposition~4.1]{BTvb} or \cite[Proposition 0.2]{AKIHES}) shows the existence of a morphism $g:W\to X$ of smooth projective varieties over~$\R$ and of a diffeomorphism $\chi:M\isoto W(\R)$ such that $g(\R)\circ\chi\in\cU$. 
 Assertion~(ii) and the last statement of Theorem \ref{approxth} now follow by applying Proposition~\ref{mainapprox} to $\cV:=\{\xi\circ\chi^{-1},\,\xi\in\cU\}\subset\ci(W(\R),X(\R))$ and by defining $\phi:=\psi\circ\chi$.
 
 Suppose conversely that (ii) holds. Applying it to a small enough neighbourhood~$\cU\subset\ci(M,X(\R))$ of $j$ shows that $j$ is homotopic, hence cobordant, to a $\ci$ map of the form $i(\R)$, by \cite[Proposition 4.4.4]{Wall}. To get (i), take $W\to Y$ to be a resolution of singularities which is an isomorphism over $Y(\R)$, define $g:W\to X$ be the induced morphism, and note that $j$ is cobordant to $g(\R)$.
 \end{proof}

 \section{Complex cobordism and Chern numbers}
 \label{complexcobordism}
 
After a short review of cobordism theory (in \S\ref{cob}) and of its relation with characteristic classes (in \S\ref{swc}), we study the top Segre class in \S\ref{partop}, our goal being Theorem \ref{divtop}.

 \subsection{The cobordism rings}
 \label{cob}
 
 Two compact $\ci$ manifolds $M_1$ and~$M_2$ of dimension~$n$ are said to be \textit{cobordant} if there exists a compact $\ci$ manifold with boundary~$C$ and a diffeomorphism $\partial C\simeq M_1\cup M_2$. Let $MO_n$ be the set of cobordism classes of such manifolds, and define $MO_*:=\bigoplus_{n\geq 0}MO_n$.
 
Let $M$ be a $\ci$ manifold. A \textit{stably almost complex structure} on $M$ is a complex structure $J$ on the real vector bundle $T_M\oplus \R^k$ for some $k\geq 0$, modulo the equivalence relation generated by $(T_M\oplus \R^k,J)\simeq(T_M\oplus \R^{k+2}=T_M\oplus \R^k\oplus\C,(J,i))$. Two $n$-dimensional stably almost complex compact $\ci$ manifolds $M_1$ and~$M_2$ are said to be \textit{complex cobordant} if there exists a stably almost complex compact $\ci$ manifold with boundary $C$ and a diffeomorphism $\partial C\simeq M_1\cup M_2$ compatible with the stably almost complex structures.  Let $MU_n$ be the set of complex cobordism classes of such manifolds, and define $MU_*:=\bigoplus_{n\geq 0}MU_n$.

Disjoint union and cartesian product endow $MO_*$ and $MU_*$ with graded ring structures: they are the \textit{unoriented cobordism ring} and the \textit{complex cobordism ring}. Thom \cite[Th\'eor\`eme IV.12]{Thom} and Milnor \cite{MilnorMU} (see also Quillen \cite[Theorem~6.5]{Quillen}) have computed that $MO_*\simeq\Z/2[x_d]_{d\neq 2^k-1}$ and $MU_*\simeq\Z[t_{2d}]_{d\geq 1}$, where $x_d\in MO_d$ and $t_{2d}\in MU_{2d}$. A comprehensive treatment of these results may be found in \cite{Kochman}.  As was noted by Milnor \cite[Lemma 1]{MilnorSW}, a striking consequence of Thom's computation is that any element of $MO_*$ may be represented by the real locus of a smooth projective variety over $\R$ (a disjoint union of products of projective spaces and Milnor hypersurfaces).

 Let $\phi:MU_*\to MO_*$ be the graded ring homomorphism forgetting the stably almost complex structures. Milnor \cite[Theorem 1]{MilnorSW} has shown that the image of~$\phi$ consists exactly of the squares in $MO_*$, hence that there exists a surjective ring homomorphism $\psi:MU_*\to MO_*$ such that $\phi(x)=\psi(x)^2$ for all $x\in MU_*$.  
 
 \begin{lem}
 \label{lemker}
The ideal $\ker(\psi)\subset MU_*$ is generated by $2$ and by $(\ker(\psi)_{2^{k+1}-2})_{k\geq 1}$. 
 \end{lem}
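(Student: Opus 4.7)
The plan is as follows. Since $\MO_*$ is an unoriented cobordism ring, it has characteristic $2$, so the integer $2$ lies in $\ker(\psi)$. It therefore suffices to describe the kernel of the induced graded $\F_2$-algebra homomorphism $\bar\psi\colon \MU_*/2\to \MO_*$ and show that it is generated, as an ideal, by homogeneous elements of degree $2^{k+1}-2 = 2(2^k-1)$ for $k\geq 1$.

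The strategy is to replace the polynomial generators $t_{2d}$ of $\MU_*/2 = \F_2[t_{2d}]_{d\geq 1}$ by new polynomial generators $u_{2d}$, constructed by induction on $d$, with the property that $\bar\psi(u_{2d}) = 0$ when $d = 2^k-1$ for some $k\geq 1$, and $\bar\psi(u_{2d})$ is an indecomposable polynomial generator of $\MO_*$ otherwise. Since $\bar\psi$ is surjective and both source and target are polynomial $\F_2$-algebras, the induced map on indecomposable quotients is surjective. When $d\neq 2^k-1$, the indecomposable quotient of $\MO_*$ in degree $d$ is spanned by $x_d$, so $\bar\psi(t_{2d}) = x_d$ modulo decomposables; correcting $t_{2d}$ by a suitable polynomial in the previously constructed $u_{2e}$ ($e<d$) yields $u_{2d}$ with $\bar\psi(u_{2d}) = x_d$. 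When $d = 2^k-1$, the graded piece $\MO_d$ contains no indecomposable generator, so every element of $\MO_d$---in particular $\bar\psi(t_{2d})$---is decomposable and hence lies in the image under $\bar\psi$ of a polynomial in the $u_{2e}$ ($e<d$); subtracting such a polynomial from $t_{2d}$ yields $u_{2d}\in\ker(\bar\psi)$. In both cases $u_{2d} = t_{2d}$ plus a polynomial in lower $u_{2e}$'s, so $\MU_*/2 = \F_2[u_{2d}]_{d\geq 1}$.

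Once these generators are in place, $\bar\psi$ is identified with the quotient map that sends $u_{2(2^k-1)}\mapsto 0$ for each $k\geq 1$ and sends the remaining $u_{2d}$ bijectively onto a polynomial generating set of $\MO_*$. Consequently, $\ker(\bar\psi)$ is the ideal generated by $\{u_{2(2^k-1)}\}_{k\geq 1}$, whose elements are homogeneous of degree $2^{k+1}-2$; pulling back to $\MU_*$ and adjoining $2$ yields the claim. The step requiring the most care is the case $d = 2^k-1$ of the induction: one must verify that $\bar\psi(t_{2d})$ is decomposable (immediate from the absence of a generator of $\MO_*$ in degree $d$) and that the inductive hypothesis supplies an explicit polynomial in the previously constructed $u_{2e}$ whose image under $\bar\psi$ matches $\bar\psi(t_{2d})$, so that the correction stays within the constructed generating set.
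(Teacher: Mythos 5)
Your proposal is correct and follows essentially the same route as the paper: both replace the polynomial generators $t_{2d}$ of $MU_*$ by corrected generators mapping to $x_d$ for $d\neq 2^k-1$ and to $0$ for $d=2^k-1$, using surjectivity of $\psi$ on indecomposables and the absence of a polynomial generator of $MO_*$ in degrees $2^k-1$. The only (immaterial) difference is that you perform the correction in $MU_*/2$ rather than integrally in $MU_*$.
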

 
 \begin{proof}
We use the isomorphisms $MO_*\simeq\Z/2[x_d]_{d\neq 2^k-1}$ and $MU_*\simeq\Z[t_{2d}]_{d\geq 1}$. Since $\psi:MU_*\to MO_*$ is surjective, $\psi(t_{2d})-x_d\in MO_d$ is decomposable for $d\neq 2^k-1$, and $\psi(t_{2d})$ is of course decomposable for $d=2^k-1$. We deduce from the surjectivity of $\psi$ the existence of $t'_{2d}\in MU_{2d}$ such that $t_{2d}-t'_{2d}$ is decomposable (so that $MU_*=\Z[t'_{2d}]_{d\geq 1}$) and such that $\psi(t'_{2d})=x_d$ if $d\neq 2^k-1$ and $\psi(t'_{2d})=0$ otherwise. It is now clear that $\ker(\psi)$ is generated by $2$ and by the $(t'_{2^{k+1}-2})_{k\geq 1}$.
 \end{proof}

 \subsection{Stiefel--Whitney and Chern numbers}
 \label{swc}
 
 Let $M$ be a compact~$\ci$ manifold of dimension $n$, and let $w_r(M)\in H^r(M,\Z/2)$ be the $r$-th Stiefel--Whitney class of its tangent bundle.
For a sequence of nonnegative integers $I=(i_1,i_2,\dots)$ with $|I|:=\sum_r ri_r=n$, we define $w_I(M):=\langle \prod_r w_r(M)^{i_r},[M]\rangle\in\Z/2$, where $[M]$ is the fundamental class of $M$. The $w_I(M)$ are the \textit{Stiefel--Whitney numbers} of $M$. Thom has shown that they only depend on the cobordism class of~$M$, and that they determine this cobordism class \cite[Th\'eor\`emes~IV.3 and~IV.11]{Thom}.

 Similarly, if $M$ is a stably almost complex compact $\ci$ manifold of dimension~$n$, we let $c_r(M)\in H^{2r}(M,\Z)$ denote the $r$-th Chern class of its stable tangent bundle, and we define the \textit{Chern numbers} $c_I(M):=\langle \prod_r c_r(M)^{i_r},[M]\rangle\in \Z$ of $M$ for $|I|=n/2$. These numbers only depend on the complex cobordism class of $M$, and determine it (see \cite[Theorem 6.5]{Quillen}).
 
 \begin{lem}
 \label{cw}
 For all $y\in MU_{2d}$ and all $I=(i_1,i_2,\dots)$ with $|I|=d$, the reduction modulo $2$ of $c_I(y)$ is equal to $w_I(\psi(y))$.
 \end{lem}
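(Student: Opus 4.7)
The plan is to reduce the claim to a topological identity about Stiefel--Whitney numbers of squares in $MO_*$, and then to establish this identity using the splitting principle and a swap involution argument in characteristic $2$.

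First, I exploit the stably almost complex structure on $y$: its underlying unoriented manifold, which represents $\phi(y) \in MO_{2d}$, has Stiefel--Whitney classes satisfying the standard complex identities $w_{2r+1}(\phi(y)) = 0$ and $w_{2r}(\phi(y)) = c_r(y) \bmod 2$. Consequently, all weight-$2d$ Stiefel--Whitney numbers of $\phi(y)$ involving an odd-indexed $w$ vanish, and
\[
\left\langle \prod_r w_{2r}(\phi(y))^{i_r},\,[\phi(y)]\right\rangle \equiv c_I(y) \pmod 2.
\]
Combining this with the defining property $\phi(y) = \psi(y)^2$ of $\psi$ (established in the proof of Lemma~\ref{lemker}) and setting $[N] := \psi(y) \in MO_d$, so that $[N \times N] = \phi(y)$, the claim reduces to the universal identity
\[
\left\langle \prod_r w_{2r}(N \times N)^{i_r},\,[N \times N]\right\rangle = \left\langle \prod_r w_r(N)^{i_r},\,[N]\right\rangle \in \Z/2
\]
valid for every closed smooth manifold $N$ of dimension $d$.

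To prove this identity, I would invoke the splitting principle in $\Z/2$-cohomology: introduce formal Stiefel--Whitney roots $\beta_1, \ldots, \beta_d \in H^1(N, \Z/2)$ for $T_N$, so that $T_{N \times N} \simeq \pi_1^* T_N \oplus \pi_2^* T_N$ has Stiefel--Whitney roots $u_i := \pi_1^* \beta_i$ and $v_i := \pi_2^* \beta_i$ and $w_r(N \times N) = e_r(u_1, \ldots, u_d, v_1, \ldots, v_d)$. Pairing with $[N \times N] = [N] \otimes [N]$ via K\"unneth only sees the $(d,d)$-bigraded part of $\Phi := \prod_r e_{2r}(u, v)^{i_r}$, which can be written as $\sum_{\lambda, \mu \vdash d} c_{\lambda, \mu} \, e_\lambda(u) \, e_\mu(v)$ with $c_{\lambda, \mu} \in \Z/2$ and $e_\lambda(u) := \prod_i e_{\lambda_i}(u)$. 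The swap $u \leftrightarrow v$ leaves $\Phi$ invariant, forcing $c_{\lambda, \mu} = c_{\mu, \lambda}$, so in characteristic $2$ the off-diagonal pairs cancel upon pairing with $[N] \otimes [N]$, and the result reduces to $\sum_{\lambda \vdash d} c_{\lambda, \lambda} \, \langle \prod_i w_{\lambda_i}(N), [N] \rangle$.

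The main obstacle is to identify the diagonal coefficients $c_{\lambda, \lambda}$. Expanding each factor $e_{2r}(u, v) = \sum_{a+b = 2r} e_a(u) e_b(v)$, a monomial in the expansion of $\Phi$ corresponds to an ordered configuration $(a_{r, k})_{r \geq 1,\, 1 \leq k \leq i_r}$ with $a_{r, k} \in \{1, \ldots, 2r - 1\}$; such a configuration contributes to $c_{\lambda, \mu}$ exactly when the multisets $\{a_{r, k}\}$ and $\{2r - a_{r, k}\}$ equal the partitions $\lambda$ and $\mu$, respectively. The involution $(a_{r, k}) \leftrightarrow (2r - a_{r, k})$ preserves the set of configurations contributing to $c_{\lambda, \lambda}$, and its unique fixed point is the \emph{identity} configuration $a_{r, k} = r$, whose associated multiset is the partition $\lambda_I$ of $d$ with $i_r$ parts equal to $r$. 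Consequently, for $\lambda \neq \lambda_I$ the involution is fixed-point-free on the contributing configurations, giving $c_{\lambda, \lambda} = 0$, while for $\lambda = \lambda_I$ it has exactly one fixed point, giving $c_{\lambda_I, \lambda_I} = 1$. The pairing therefore evaluates to $\langle \prod_r w_r(N)^{i_r}, [N]\rangle = w_I(N)$, matching $c_I(y) \bmod 2$ and completing the proof.
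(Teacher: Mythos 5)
Your proof is correct, and its skeleton is exactly the paper's: both arguments reduce $c_I(y)$ to the Stiefel--Whitney number $w_{I'}(\phi(y))$ with $I'=(0,i_1,0,i_2,\dots)$ via the mod~$2$ relation between Chern and Stiefel--Whitney classes of a stably almost complex manifold, and then use $\phi(y)=\psi(y)^2$ together with cobordism invariance of Stiefel--Whitney numbers to pass to $N\times N$, where $N$ represents $\psi(y)$. The only divergence is in the last step, the identity $w_{I'}(N\times N)=w_I(N)$: the paper simply cites Milnor's lemma (\cite[Lemma~2]{MilnorSW}), whereas you give a self-contained proof via the splitting principle and an involution argument; this buys independence from the reference at the cost of some combinatorics. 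Your combinatorial argument is sound, with one small imprecision: in the expansion $e_{2r}(u,v)=\sum_{a+b=2r}e_a(u)e_b(v)$ the index $a$ ranges over $\{0,\dots,2r\}$, not $\{1,\dots,2r-1\}$, and configurations with some $a_{r,k}\in\{0,2r\}$ can genuinely contribute to diagonal coefficients (e.g.\ $(a_1,a_2)=(0,4)$ contributes to $c_{(4),(4)}$ when $i_2=2$). This is harmless, because such configurations are permuted without fixed points by your involution $a_{r,k}\mapsto 2r-a_{r,k}$ (whose fixed points have all $a_{r,k}=r\geq 1$), so they do not change any $c_{\lambda,\lambda}$ modulo $2$; but the sentence restricting the range of $a_{r,k}$ should be corrected.
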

 
 \begin{proof}
 Represent $y$ by a stably almost complex compact $\ci$ manifold $M$ and $\psi(y)$ by a compact $\ci$ manifold $N$. For $I'=(0,i_1,0,i_2,\dots)$, one has 
 $$c_I(M)=w_{I'}(M)=w_{I'}(N\times N)=w_I(N)\in\Z/2,$$
 where the first equality holds by \cite[Problem 14-B]{MS}, the second since $M$ is cobordant to $N\times N$, and the third is \cite[Lemma~2]{MilnorSW}.
\end{proof}

The relation $(\sum_r c_r(M))(\sum_r s_r(M))=1$ defines the \textit{Segre classes} or \textit{normal Chern classes} $s_r(M)\in H^{2r}(M,\Z)$ of $M$.
Pairing the top Segre class with $[M]$ yields a morphism ${s_d:MU_{2d}\to\Z}$ which is a linear combination of Chern numbers. This characteristic number is multiplicative in the following sense.
 
 \begin{lem}
 \label{mult}
 For $y\in MU_{2d}$ and $y'\in MU_{2d'}$, one has $s_{d+d'}(yy')=s_{d}(y)s_{d'}(y')$.
 \end{lem}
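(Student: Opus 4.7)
The plan is to reduce the statement to a direct computation on a product of manifolds representing the classes $y$ and $y'$.

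First, I would represent $y \in MU_{2d}$ and $y' \in MU_{2d'}$ by stably almost complex compact $\ci$ manifolds $M$ and $M'$ of real dimensions $2d$ and $2d'$. Since the product in $MU_*$ is induced by the cartesian product with the obvious product stably almost complex structure, the class $yy'\in MU_{2(d+d')}$ is represented by $M\times M'$, whose stable tangent bundle is the Whitney sum $\pi_1^*T_M\oplus \pi_2^*T_{M'}$ (where $\pi_1,\pi_2$ denote the two projections).

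Next, I would invoke the multiplicativity of the total Segre class under Whitney sum. Since $c(\pi_1^*T_M\oplus \pi_2^*T_{M'})=\pi_1^*c(T_M)\cdot \pi_2^*c(T_{M'})$, the formal inverse gives
\[
s(M\times M')=\pi_1^*s(T_M)\cdot \pi_2^*s(T_{M'}).
\]
Extracting the degree $2(d+d')$ component yields
\[
s_{d+d'}(M\times M')=\sum_{i+j=d+d'}\pi_1^*s_i(M)\cdot \pi_2^*s_j(M')\in H^{2(d+d')}(M\times M',\Z).
\]

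Finally, I would pair with the fundamental class $[M\times M']=[M]\times [M']$. By the Künneth formula (or Fubini for integration along the product), the pairing of $\pi_1^*s_i(M)\cdot \pi_2^*s_j(M')$ with $[M\times M']$ vanishes unless $i=d$ and $j=d'$, in which case it equals $\langle s_d(M),[M]\rangle\cdot\langle s_{d'}(M'),[M']\rangle=s_d(y)\cdot s_{d'}(y')$. This yields the desired equality $s_{d+d'}(yy')=s_d(y)\,s_{d'}(y')$.

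There is no real obstacle here: the only subtlety is to keep track of the fact that the Segre classes under discussion are those of the stable tangent bundle, but this is harmless since adding a trivial summand does not alter any $s_r$ (as $c$ of a trivial bundle is $1$). The argument is essentially the standard multiplicativity of any characteristic number coming from a multiplicative genus, applied to the genus defined by the top Segre class.
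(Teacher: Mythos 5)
Your proof is correct and follows the same route as the paper, which simply represents $y$ and $y'$ by stably almost complex manifolds and invokes the Whitney sum formula for the product; you have merely written out the Künneth/pairing step that the paper leaves implicit. No issues.
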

 
 \begin{proof}
 Represent $y$ and $y'$ by stably almost complex compact $\ci$ manifolds and apply the Whitney sum formula \cite[(14.7)]{MS}.
 \end{proof}

 \subsection{Divisibility of the top Segre class}
 \label{partop}

 In \cite{RT},  Rees and Thomas study divisibility properties of some Chern numbers. In Theorem \ref{RT}, we recall one of their results, which we complement in Theorem \ref{divtop}. Recall from \S\ref{notation} that we let $\alpha(m)$ be the number of ones in the dyadic expansion of $m$.
 
 \begin{thm}[Rees--Thomas]
 \label{RT}
For $d\geq 0$ and $e\geq 1$, the function ${s_d:MU_{2d}\to\Z}$ is divisible by $2^e$ if and only if $\alpha(d+e-1)>2(e-1)$.
 \end{thm}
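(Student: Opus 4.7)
This is a classical result of Rees and Thomas; the plan is to follow their original strategy, which computes $s_d$ on explicit ring generators of $MU_{2d}$ and uses multiplicativity (Lemma~\ref{mult}) to reduce to a combinatorial statement about $2$\nobreakdash-adic valuations. By Lemma~\ref{mult}, the image of $s_d: MU_{2d}\to\Z$ is the $\Z$\nobreakdash-span of products $s_{d_1}(y_1)\cdots s_{d_k}(y_k)$ over compositions $d=d_1+\dots+d_k$ with $y_i\in MU_{2d_i}$, so it suffices to determine the minimum $2$\nobreakdash-adic valuation of such products on a generating set.

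I would take as generators the projective spaces $\P^d$ together with the Milnor hypersurfaces $H_{m,n}\subset\P^m\times\P^n$ of bidegree $(1,1)$, with $m+n=d+1$. For $\P^d$, direct computation using $s(T)=(1+H)^{-(d+1)}$ yields $s_d(\P^d)=(-1)^d\binom{2d}{d}$. For $H_{m,n}$, the adjunction formula gives $s(T_{H_{m,n}})=(1+a+b)/((1+a)^{m+1}(1+b)^{n+1})$ in terms of hyperplane classes $a,b$, and $s_d(H_{m,n})$ is the coefficient of $a^m b^n$ in $(a+b)(1+a+b)/((1+a)^{m+1}(1+b)^{n+1})$, an explicit alternating sum of products of binomial coefficients.

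The next step is to translate these into $2$\nobreakdash-adic information via Kummer's theorem, $v_2\binom{a+b}{a}=\alpha(a)+\alpha(b)-\alpha(a+b)$. This immediately gives $v_2(s_d(\P^d))=\alpha(d)$ and more intricate expressions in $\alpha$ for $v_2(s_d(H_{m,n}))$. Combined with Lemma~\ref{mult}, the theorem reduces to the combinatorial claim that the minimum over all partitions $d=d_1+\dots+d_k$ and all generator choices of $\sum_i v_2(s_{d_i}(\,\cdot\,))$ coincides with the largest $e\geq 1$ such that $\alpha(d+e-1)>2(e-1)$.

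The crux, and the main obstacle, is this combinatorial optimisation. For the sufficient direction (divisibility by $2^e$ when $\alpha(d+e-1)>2(e-1)$), I would bound the sum from below using the subadditivity of $\alpha$ under addition and induct on the length of the composition, tracking the carries. For the necessary direction, one must exhibit an explicit $y\in MU_{2d}$ attaining the predicted valuation; I expect this to be a product of Milnor hypersurfaces $H_{m_i,n_i}$ whose dimensions are dictated by the binary digits of $d+e-1$, chosen so that the carries tracked by Kummer's theorem are minimised. An alternative, more conceptual approach would use the forgetful map $\psi: MU_*\to MO_*$ from Lemma~\ref{lemker}, reducing $s_d \bmod 2$ via Lemma~\ref{cw} to a Stiefel--Whitney number on $MO_d$ and iterating inside $\ker(\psi)$ to extract one factor of $2$ at each step, but the direct binomial-coefficient approach seems more explicit.
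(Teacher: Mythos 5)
Your proposal is a plan rather than a proof: the step you yourself identify as ``the crux, and the main obstacle'' --- determining the minimal $2$\nobreakdash-adic valuation of $\sum_i v_2(s_{d_i}(\cdot))$ over all monomials in generators, both the lower bound in the sufficient direction and the construction of an element attaining it in the necessary direction --- is only described in the conditional (``I would bound\dots'', ``I expect this to be\dots'') and never carried out. This is precisely where all the content of the Rees--Thomas theorem lives; the preliminary computations you do perform ($s_d(\P^d)=(-1)^d\binom{2d}{d}$, hence $v_2(s_d(\P^d))=\alpha(d)$ by Kummer, and the Segre class of a Milnor hypersurface via adjunction) are correct but routine, and the reduction of the image of $s_d$ to its values on monomials via Lemma~\ref{mult} is fine. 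Without the combinatorial optimisation the theorem is not established. Note also that your target statement ``the minimum coincides with the largest $e$ with $\alpha(d+e-1)>2(e-1)$'' tacitly requires that the set of such $e$ be an interval $\{1,\dots,e_{\max}\}$; this downward closedness (that $\alpha(d+f)>2f$ implies $\alpha(d+f-1)>2(f-1)$) is an elementary but necessary verification that your write-up omits.

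For comparison: the paper does not reprove this result at all. It quotes \cite[Theorem 3]{RT}, whose conclusion is that $s_d$ is divisible by $2^e$ if and only if $\alpha(d+f)>2f$ for \emph{all} $0\leq f\leq e-1$, and the entire content of the paper's proof is the one-line observation that this conjunction collapses to the single condition at $f=e-1$. So if your aim is to match the paper, a citation plus that reformulation suffices; if your aim is a self-contained proof, you must actually execute the carry-tracking argument (or, following Rees and Thomas, the Adams spectral sequence / $K$-theoretic argument they use), which your proposal does not do.
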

 
 \begin{proof}
  Rees and Thomas \cite[Theorem 3]{RT} show that $s_d:MU_{2d}\to\Z$ is divisible by $2^e$ if and only if $\alpha(d+f)>2f$ for all $0\leq f\leq e-1$, and it is easily verified that $\alpha(d+f)>2f$ implies that $\alpha(d+f-1)>2(f-1)$ for all $f\geq 1$.
 \end{proof}
 
 We point out for later use an easy corollary of Lemma~\ref{mult} and Theorem~\ref{RT}.
 
 \begin{cor}
 \label{cor4}
 For $d\geq 1$, the function $s_d:MU_{2d}\to\Z$ takes even values, and takes values divisible by $4$ on decomposable elements.
 \end{cor}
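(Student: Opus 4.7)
The plan is to derive both statements directly from the two preceding results: Theorem~\ref{RT} for evenness, and Lemma~\ref{mult} (combined with the first assertion) for divisibility by $4$ on decomposables.

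For the first claim, I would apply Theorem~\ref{RT} with $e=1$. The condition $\alpha(d+e-1) > 2(e-1)$ becomes $\alpha(d) > 0$, which holds precisely when $d \geq 1$. Hence, for every $d \geq 1$, the function $s_d : MU_{2d} \to \Z$ is divisible by $2$, i.e.\ takes even values.

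For the second claim, I would unwind the definition: a decomposable element of $MU_{2d}$ is a finite sum of products $y \cdot y'$ with $y \in MU_{2d'}$ and $y' \in MU_{2d''}$, where $d', d'' \geq 1$ and $d' + d'' = d$. Since $s_d$ is a linear combination of Chern numbers (hence an additive function on $MU_{2d}$), it suffices to check divisibility by $4$ on each such product $y \cdot y'$. By Lemma~\ref{mult}, one has $s_d(yy') = s_{d'}(y)\, s_{d''}(y')$. The first claim, applied to $d' \geq 1$ and to $d'' \geq 1$, shows that both factors are even, so their product is divisible by $4$.

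There is no real obstacle here; the only point to be careful about is the additivity of $s_d$, which is immediate from the fact that $s_d$ is a linear combination of Chern numbers evaluated against the fundamental class, together with the observation that the Chern numbers themselves are additive on the complex cobordism group $MU_{2d}$.
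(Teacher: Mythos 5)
Your proof is correct and is exactly the argument the paper intends: the paper states the corollary as ``an easy corollary of Lemma~\ref{mult} and Theorem~\ref{RT}'' without writing it out, and your application of Theorem~\ref{RT} with $e=1$ for evenness, followed by Lemma~\ref{mult} and additivity of $s_d$ for the divisibility by $4$ on decomposables, is precisely that argument.
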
 
 
 Here is the main result of \S\ref{complexcobordism}.
 
 \begin{thm} 
 \label{divtop}
 Let $d\geq 0$ and $e\geq 1$ be such that $\alpha(d+e-1)>2(e-1)$. Then the function $\frac{s_d}{2^e}:MU_{2d}\to\Z$ coincides modulo $2$ with an integral linear combination of Chern numbers if and only if $\alpha(d+e)\geq 2e$.
 \end{thm}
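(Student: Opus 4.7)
The strategy is to reduce the statement to a computation of the minimal $2$-adic valuation of $s_d$ on $\ker(\psi)_{2d}$, perform this computation on a monomial basis of $MU_*$, and finally verify a combinatorial identity on binary digits.

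By Lemma~\ref{cw}, each Chern number $c_I$ reduces mod $2$ to $w_I\circ\psi$; since the Stiefel--Whitney numbers form a $\Z/2$-basis of $\mathrm{Hom}_{\Z/2}(MO_d,\Z/2)$ by Thom's theorem, the mod-$2$ reductions of $\Z$-linear combinations of Chern numbers are exactly the $\Z/2$-linear functionals on $MU_{2d}$ that factor through $\psi\colon MU_{2d}\to MO_d$. Hence $\tfrac{s_d}{2^e}\bmod 2$ is such a combination if and only if $s_d(\ker(\psi)_{2d})\subseteq 2^{e+1}\Z$. Writing $V(d)$ for the minimal $2$-adic valuation of $s_d$ on $\ker(\psi)_{2d}$, the theorem is equivalent to the identity $V(d)\geq e+1 \iff \alpha(d+e)\geq 2e$ under the hypothesis $\alpha(d+e-1)>2(e-1)$.

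To compute $V(d)$ I choose polynomial generators $\{t_{2k}\}_{k\geq 1}$ of $MU_*$ so that (as in the proof of Lemma~\ref{lemker}) $\psi(t_{2k})$ vanishes exactly when $k=2^j-1$ for some $j\geq 1$, and so that $v_2(s_k(t_{2k}))=a(k):=\max\{f\geq 1:\alpha(k+f-1)>2(f-1)\}$ attains the sharp value from Theorem~\ref{RT} (with convention $a(0):=0$). By multiplicativity (Lemma~\ref{mult}), a monomial $\prod t_{2k_i}^{a_i}$ of degree $d$ satisfies $v_2\bigl(s_d(\textstyle\prod t_{2k_i}^{a_i})\bigr)=\sum a_i\,a(k_i)$. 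The kernel $\ker(\psi)_{2d}$ is spanned by the ``good'' monomials containing some factor $t_{2(2^j-1)}$ together with twice the ``bad'' monomials, so $V(d)=\min(\mu_g,\,1+\mu_b)$ where $\mu_g$ and $\mu_b$ are the respective minima of $\sum a_i\,a(k_i)$ over good and bad partitions of $d$. Optimising $\mu_g$ by fixing a single special part $2^j-1$ and using Theorem~\ref{RT} on the remaining partition, and controlling $\mu_b$ by Theorem~\ref{RT} directly, yields
\[
V(d) = 1 + \min_{k\geq 0,\ 2^k\leq d+1}\ a(d+1-2^k) = M(d) + 1,
\]
where $M(d):=\max\{f\geq 0:\alpha(d+f)\geq 2f\}$.

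It remains to verify the equivalence $M(d)\geq e \iff \alpha(d+e)\geq 2e$ and the identity $\min_k a(d+1-2^k) = M(d)$. The former follows from the observation that $f\mapsto\alpha(d+f)-2f$ is strictly decreasing (since $\alpha(n+1)\leq\alpha(n)+1$, so the difference drops by at least one per step), whence the condition $\alpha(d+f)\geq 2f$ holds for $f=e$ iff it holds for some $f\geq e$. The latter identity rests on the elementary estimate $\alpha(n-2^k)\geq\alpha(n)-1$ for $n\geq 2^k$ (obtained by inspecting the bit of $n$ at position $k$). The main obstacle is the bit-combinatorics in this last step, particularly the edge case $d+1=2^{k_0}$, where the generator $t_{2(2^{k_0}-1)}\in\ker(\psi)_{2d}$ realizes $V(d)=1$ directly via the $k=k_0$ term giving $a(0)=0$, and where the restriction to $e\geq 1$ in the hypothesis must be combined with the bound $\alpha(2^{k_0}-1+e)=1+\alpha(e-1)<2e$ to close the argument.
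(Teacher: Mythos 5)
Your reduction is sound and in fact cleaner than a case-by-case argument: combining Lemma~\ref{cw} with Thom's theorem to identify the mod~$2$ reductions of integral combinations of Chern numbers with the $\Z/2$-functionals on $MU_{2d}$ factoring through $\psi$, and hence reducing the theorem to the statement $s_d(\ker(\psi)_{2d})\subseteq 2^{e+1}\Z\iff\alpha(d+e)\geq 2e$, is exactly the right move (the paper's ``if'' direction is precisely the implication $\ker(\psi)\subseteq\ker(\chi)$, obtained via Lemma~\ref{lemker}). Your monotonicity observation $\alpha(d+f)-2f$ strictly decreasing, giving $M(d)\geq e\iff\alpha(d+e)\geq 2e$, is also correct. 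Two remarks on the setup: the simultaneous normalization of generators ($\psi(t_{2(2^j-1)})=0$ \emph{and} $v_2(s_{2^j-1}(t_{2(2^j-1)}))=1$) does hold, but it needs the observation that decomposables have $s$-valuation $\geq 2$ (Corollary~\ref{cor4}) together with $a(2^j-1)=1$; you assert it without argument. This is the same issue the paper handles by subtracting the decomposable $z_2$ from $y_2$.

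The genuine gap is the inequality $V(d)\leq M(d)+1$, equivalently $\min_{k}a(d+1-2^k)\leq M(d)$, which carries the entire ``only if'' direction of the theorem. The estimate you invoke, $\alpha(n-2^k)\geq\alpha(n)-1$, proves only the \emph{opposite} inequality $\min_k a(d+1-2^k)\geq M(d)$ (hence the ``if'' direction). For ``$\leq$'' you must \emph{produce} a $k$ with $2^k\leq d+1$ and $a(d+1-2^k)\leq M(d)$, i.e.\ (setting $e=M(d)+1$, so $\alpha(d+e)\leq 2e-1$) a $k$ with $\alpha(d+e-2^k)\leq 2e-2$; a priori the drop $\alpha(n-2^k)=\alpha(n)-1$ only occurs when $2^k$ is a $1$-bit of $n=d+e$, and one must check that such a bit satisfies $2^k\leq d+1$ — your ``inspecting the bit of $n$ at position $k$'' does not address the mismatch between bits of $d+e$ and the constraint $2^k\leq d+1$. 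This is exactly where the paper's proof does real work: it writes $d+e=2^{a_1}+\dots+2^{a_f}$ with $f=\alpha(d+e)\leq 2e-1$, takes $d_2=2^{a_f}-1$ and $d_1=d-d_2$, and checks $\alpha(d_1+e-1)=f-1\leq 2e-2$, so that $y_1y_2$ with $v_2(s_{d_1}(y_1))\leq e-1$ and $v_2(s_{d_2}(y_2))=1$ realizes valuation $e$ in $\ker(\psi)_{2d}$. Your flagged ``main obstacle'' is therefore not an edge case but the combinatorial core of the forward implication, and the proposal as written does not close it.
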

 
 \begin{proof}
Assume first that $\alpha(d+e)<2e$, and let $d+e=2^{a_1}+\dots+2^{a_f}$ be the dyadic expansion of $d+e$, with $f\leq 2e-1$.
Define $d_1:=2^{a_1}+\dots+2^{a_{f-1}}-(e-1)$ and $d_2:=2^{a_f}-1$. One has $\alpha(d_1+e-1)=f-1\leq 2e-2$. It thus follows from Theorem~\ref{RT} that there exists $y_{1}\in MU_{2d_1}$ such that $s_{d_1}(y_1)$ is not divisible by~$2^{e}$. Theorem~\ref{RT} 
also shows the existence of $y_2\in MU_{2d_2}$ such that $s_{d_2}(y_2)$ is not divisible by $4$. 
We deduce from Lemma \ref{mult} that $s_{d}(y_1y_2)/2^e\in\Z$ is odd.

 Since the map ${\psi:MU_*\to MO_*}$ is surjective and $MO_{2^{a_f}-1}$ contains no indecomposable element (see \S\ref{cob}), there exists a decomposable element $z_2\in MU_{2d_2}$ with $\psi(z_2)=\psi(y_2)$. By Corollary \ref{cor4}, one may replace $y_2$ with $y_2-z_2$ and thus assume that $\psi(y_2)=0$. But then $\psi(y_1y_2)=\psi(y_1)\psi(y_2)=0$, which shows, in view of Lemma \ref{cw}, that all the Chern numbers of $y_1y_2$ are even. Consequently, $s_{d}(y_1y_2)/2^e \pmod 2$ cannot be a linear combination with $\Z/2$ coefficients of Chern numbers of $y_1y_2$. We have thus proven the direct implication of the theorem.

Assume now that $\alpha(d+e)\geq 2e$. Let $k$ be such that $1\leq 2^k-1\leq d$.
 We claim that $MU_{2^{k+1}-2}\cdot MU_{2d-2^{k+1}+2}$ is included in the kernel of the morphism ${\chi:MU_{2d}\to\Z/2}$ obtained by reducing $\frac{s_d}{2^e}:MU_{2d}\to\Z$ modulo $2$. To see it, choose $y\in MU_{2^{k+1}-2}$ and $z\in MU_{2d-2^{k+1}+2}$. We now compute $$\alpha(d-2^k+1+(e-1))=\alpha(d-2^k+e)\geq \alpha(d+e)-1>2(e-1),$$
 and Theorem \ref{RT} shows that $s_{d-2^k+1}(z)$ is divisible by $2^e$. Since $s_{2^k-1}(y)$ is even by Corollary \ref{cor4}, Lemma~\ref{mult} shows that $s_d(yz)$ is divisible by $2^{e+1}$, as wanted.

We deduce from Lemma \ref{lemker} that the kernel of ${\psi:MU_{2d}\to MO_d}$ is included in the kernel of $\chi$, hence that $\chi=\mu\circ\psi$ for some morphism~$\mu:MO_d\to\Z/2$. Since a class in $MO_d$ is determined by its Stiefel--Whitney numbers (see \S\ref{swc}), the morphism~$\mu$ is a linear combination of Stiefel--Whitney numbers. Lemma \ref{cw} now implies that~$\chi$ is the reduction modulo $2$ of an integral linear combination of Chern numbers, which concludes the proof.
\end{proof}

\begin{ex}
\label{Noether}
 The first interesting case of Theorem \ref{divtop} is $d=2$ and $e=1$. Since $\alpha(d+e)=\alpha(3)=2=2e$, it predicts the existence of an integral linear combination of Chern numbers which coincides modulo $2$ with $\frac{s_2}{2}:MU_4\to \Z$. 
 
 We claim that this linear combination may be chosen to be $c_2$. Indeed, $MU_4$ is generated by classes of projective complex surfaces (see \cite[II, Corollary~10.8]{Adams}), and for such a surface $S$, our claim follows from Noether's formula
 $$s_2(S)=(c_1^2-c_2)(S)=12\chi(S,\cO_S)-2c_2(S).$$
\end{ex}

 \section{The double point class}
 \label{doublepoint}
  
We use the results of \S\ref{complexcobordism} in combination with a double point formula. We give applications to Chow groups in \S\ref{high}, proving Theorems \ref{Chow3}, \ref{Chow5} and \ref{Chow4}. We also construct new examples of submanifolds of real loci of smooth projective varieties over~$\R$ without algebraic approximations in \S\ref{subnoapp}, proving Theorems \ref{thC2} and \ref{thP}.

 \subsection{A consequence of Fulton's double point formula}
 \label{dbpt}

Formulas for the rational equivalence class of the double point locus of a morphism go back to Todd \cite[(7.01)]{Todd} and Laksov \cite[Theorem 26]{Laksov} under strong assumptions on the morphism. The following proposition is an application of a refined double point formula of Fulton \cite[Theorem 9.3]{Fulton}, which is valid for an arbitrary morphism.
 %Got no access to [FL, Residual intersections and the double point formula] which contains the strategy of proof.
 
 \begin{prop}
 \label{FL}
 Let $g:W\to X$ be a morphism of smooth projective varieties over $\R$. Let $d$ be the dimension of $W$ and assume that $X$ has dimension $2d$. Let $N_{W/X}:=[g^*T_X]-[T_W]$ be the virtual normal bundle of $g$.
 \begin{enumerate}[(i)]
 \item If $g$ is an embedding, then 
 \begin{equation}
 \label{modrien}
 \deg ((g_*[W])^2)=\deg(c_d(N_{W/X})).
 \end{equation}
 \item If $g$ is an embedding in a neighbourhood of $g(\C)^{-1} (X(\R))$, then 
 \begin{equation}
 \label{mod4}
 \deg ((g_*[W])^2)\equiv\deg(c_d(N_{W/X}))\pmod 4.
 \end{equation}
 \end{enumerate}
 \end{prop}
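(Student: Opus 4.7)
For part (i), since $g=i$ is a regular closed immersion of smooth varieties of codimension $d$, with locally free normal bundle $N_{W/X}$, the classical self-intersection formula (see \cite[Corollary~6.3]{Fulton}) gives $i^*i_*[W]=c_d(N_{W/X})\cap[W]$ in $\CH_0(W)$. Combining this with the projection-formula identity $(i_*[W])^2=i_*(i^*i_*[W])$ and taking degrees yields (\ref{modrien}).

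For part (ii), I would invoke Fulton's double point formula \cite[Theorem~9.3]{Fulton}, which extends the self-intersection formula to an arbitrary morphism $g$ between smooth varieties of dimensions $d$ and $2d$. It produces a class $\D(g)\in\CH_0(W)$, supported on the locus where $g$ fails to be an embedding --- namely, the non-immersive points of $g$ together with the points admitting a distinct preimage-mate --- such that
$$g^*g_*[W]\;=\;c_d(N_{W/X})\cap[W]+\D(g)$$
in $\CH_0(W)$. Pushing forward by $g$ and taking degrees, (\ref{mod4}) reduces to the congruence $\deg\D(g)\equiv 0\pmod 4$.

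Under the hypothesis of (ii), the support of $\D(g)$ is disjoint from $g(\C)^{-1}(X(\R))$, so its image in $X$ avoids $X(\R)$. The divisibility of $\deg\D(g)$ by $4$ would then come from a $(\Z/2)^2$-symmetry. The class $\D(g)$ is the pushforward to $W$, via either of the two projections, of a canonical class $\widetilde\D(g)$ supported on the complement of the diagonal in the refined fibered square $W\times_X W$. After base change to $\C$, this $\widetilde\D(g)$ is invariant under the involution $\sigma$ swapping the two factors, as well as under the complex conjugation $c$ coming from the real structure, so $(\Z/2)^2=\langle\sigma,c\rangle$ acts on its support. A geometric point $(x,y)$ of this support fixed by $\sigma$ would lie on the diagonal (excluded); fixed by $c$ would have $x,y\in W(\R)$ and hence $g(x)\in X(\R)$ (excluded by hypothesis); fixed by $\sigma c$ would satisfy $y=\bar x$ and $g(x)=\overline{g(x)}\in X(\R)$ (excluded again). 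The action is therefore free, the multiplicity of $\widetilde\D(g)$ is constant along each four-element orbit, and $\deg\D(g)=\deg\widetilde\D(g)$ is divisible by $4$.

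The main obstacle is matching Fulton's precise construction of $\D(g)$ --- which proceeds via the blow-up of $W\times W$ along $\Delta_W$ and an excess-intersection computation against $\Delta_X\subset X\times X$ --- with the naive symmetric picture above. A clean way to sidestep this is to deform $g$ inside a conjugation-equivariant family that leaves $g$ unchanged on a neighbourhood of $g(\C)^{-1}(X(\R))$ and whose generic member is an immersion with only finitely many transverse nodal double points; both sides of (\ref{mod4}) are invariant in such a family, and in the nodal case $\deg\D(g)=2n$ with $n$ the number of image double points, which is even because by hypothesis these images form complex-conjugate pairs in $X(\C)\setminus X(\R)$.
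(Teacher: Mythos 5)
Your overall strategy is the paper's: derive (i) from the self-intersection formula, and derive (ii) from Fulton's double point formula together with two separate factors of $2$, one coming from the involution exchanging the two branches at a double point and one from complex conjugation. The problem is that you leave the first factor of $2$ unjustified and then propose a repair that does not work. The ``main obstacle'' you identify --- relating Fulton's class $\D(g)$ to a symmetric class on (a modification of) $W\times_X W$ minus the diagonal --- is genuine, and it is resolved not by your $(\Z/2)^2$-orbit count but by citing \cite[Example 9.3.14]{Fulton}: Fulton shows there that $g_*\D(g)=2\,\overline{\D}(g)$ for a well-defined class $\overline{\D}(g)\in\CH_0(\oD(g))$ on the image $\oD(g):=g(D(g))$ of the double locus. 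Once this is in hand, the second factor of $2$ is much softer than your free-action argument: the hypothesis of (ii) says exactly that $\oD(g)$ has no real points, so every closed point of $\oD(g)$ has residue field $\C$ and hence even degree, whence $\deg(\overline{\D}(g))$ is even and $\deg(g_*\D(g))\equiv 0\pmod 4$. Your version of this step also has a soft spot of its own: invariance of a rational equivalence \emph{class} under a free action of a group of order $4$ does not by itself force its degree to be divisible by $4$; one needs an actual equivariant cycle with well-defined multiplicities (or Fulton's construction), which is precisely what you concede you cannot extract.

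The fallback via deformation is not viable. The proposition concerns an arbitrary morphism $g$ of fixed smooth projective varieties; such a morphism need not admit any nonconstant algebraic deformation at all, let alone a conjugation-equivariant family fixing $g$ near $g(\C)^{-1}(X(\R))$ whose general member is an immersion with only transverse double points, and without an algebraic family the invariance of the two sides of (\ref{mod4}) has no content. So the congruence cannot be reduced to the nodal case. With \cite[Example 9.3.14]{Fulton} substituted for both the symmetric-lift claim and the deformation argument, your proof of (ii) becomes the paper's; part (i) is correct as written (the paper derives it from the same double point formula, noting in a remark that it also follows directly from \cite[Corollary 6.3]{Fulton}, as you do).
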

 
 \begin{proof}
 Let $D(g)\subset W$ be the closed subset consisting of those $x\in W$ such that $g$ is not an embedding above a neighbourhood of $g(x)$. Let $\D(g)\in \CH_0(D(g))$ be the double point class of $g$ defined in \cite[\S 9.3]{Fulton}. By \cite[Theorem 9.3]{Fulton}, one has 
 \begin{equation*}
 \D(g)=g^*g_*[W]-c_d(N_{W/X})\in \CH_0(W).
 \end{equation*}
 
Since $g_*g^*g_*[W]=(g_*[W])^2$ by the projection formula, we deduce that 
 \begin{equation}
 \label{eqFL}
 \deg(g_*\D(g))=\deg((g_*[W])^2)-\deg(c_d(N_{W/X}))\in\Z.
 \end{equation}
 
In case (i), one has $D(g)=\varnothing$, hence $\D(g)=0$, and (\ref{eqFL}) implies (\ref{modrien}).

Define $\oD(g):=g(D(g))$. By \cite[Example 9.3.14]{Fulton}, there exists a $0$-cycle $\overline{\D}(g)\in \CH_0(\oD(g))$ such that $g_*\D(g)=2\overline{\D}(g)\in \CH_0(\oD(g))$. The hypothesis of (ii) implies that $\oD(g)\subset X$ has no real points, hence that $\deg(g_*\D(g))=2\deg(\overline{\D}(g))$ is divisible by~$4$. The desired congruence (\ref{mod4}) now follows from~(\ref{eqFL}).
 \end{proof}
 
 \begin{rem}
 Proposition \ref{FL} (i) is of course much easier than Proposition \ref{FL} (ii). It follows for instance from \cite[Corollary 6.3]{Fulton}.
 \end{rem}

 \subsection{Weil restrictions of scalars and quotients of abelian varieties}
 
 We gather here the geometric constructions that will be used in \S\ref{high}.
 
We let $(A,\lambda)$ denote a very general principally polarized abelian variety of dimension $d\geq 1$ over $\C$.
Let $e_1,\dots,e_d,f_1,\dots,f_d\in H^1(A(\C),\Z)$ be a basis such that the principal polarization $\lambda\in H^2(A(\C),\Z)=\extp^2 H^1(A(\C),\Z)$ of $A$ is equal to $\sum_i e_i\wedge f_i$. Denote by $(A',\lambda'=\sum_ie'_i\wedge f'_i)$ another copy of $(A,\lambda)$, which we identify with the dual of $A$ by means of the principal polarization. Let $\mu\in H^2(A(\C)\times A'(\C),\Z)$ be  the class of the Poincar\'e bundle. As in \cite[Lemma 14.1.10]{BL} (whose notation is different from ours), one computes that 
$\mu=\sum_i (e_i\wedge f_i'+e'_i\wedge f_i)$.
%It is the class of Id=\sum e_i\otimes e_i^*+\sum_i f_i\otimes f_i^*, taking into account the identifications e_i^*=f_i and f_i^*=-e_i induced by \lambda.

\begin{lem}
\label{HodgesquareQ}
The subring $\Hdg^*(A(\C)\times A'(\C),\Q)\subset H^*(A(\C)\times A'(\C),\Q)$ of Hodge classes is generated by $\lambda$, $\lambda'$ and $\mu$.
\end{lem}

\begin{proof}
Let $V:=H_1(A(\C),\Q)$. In \cite{Murty}, two subgroups $\Hod(A)\subset L(A)$ of the symplectic group $\Sp(V,\lambda)$ are considered. As $A$ is very general, the subgroup $\Hod(A)$ is equal to $\Sp(V,\lambda)$ (see \cite[Proposition~17.4.2]{BL}). It follows that $\Hod(A)=L(A)=\Sp(V,\lambda)$ and that $\End(A)_{\Q}=\Q$ (see \textit{e.g.} \cite[\S 17.6~(3)]{BL}). This implies that $A$ has no factors of type III (in the sense of \cite[p.~199]{Murty}). We can now apply \cite[Theorem 3.1]{Murty} to show that $\Hdg^*(A(\C)\times A'(\C),\Q)$ is generated as a ring by $\Hdg^2(A(\C)\times A'(\C),\Q)$.

The K\"unneth formula induces an isomorphism of Hodge structures:
$$H^2(A(\C)\times A'(\C),\Q)\simeq H^2(A(\C),\Q)\oplus H^2(A'(\C),\Q)\oplus \End(H^1(A(\C),\Q)).$$

The $\Q$-vector spaces $\Hdg^2(A(\C),\Q)$ and  $\Hdg^2( A'(\C),\Q)$ are respectively generated by $\lambda$ and $\lambda'$, by Mattuck's theorem \cite[Theorem 17.4.1]{BL}. As $\End(A)_{\Q}=\Q$, the $\Q$-vector space of Hodge classes in $\End(H^1(A(\C),\Q))$ is one-dimensional, generated by $\mu$. 
This concludes the proof.
\end{proof}

In the next lemmas, we let $\Z/2$ act on $A\times A'$ by exchanging the two factors.  We define $D:=\Hdg^{2d}(A(\C)\times A'(\C),\Z)$ and $E:=D^{\Z/2}$, and we consider the subgroup $F\subset H^1(A(\C)\times A'(\C),\Z)$ generated by $e_2,\dots,e_d$, $f_1,\dots,f_d$, $e'_2,\dots,e'_d$, $f'_1,\dots,f'_d$, $2e_1, e_1+e'_1,2e'_1$. We also define $\varepsilon_{k,l,m}:=\frac{\lambda^k}{k\,!}\cdot\frac{(\lambda')^{l}}{l\,!}\cdot \frac{\mu^m}{m\,!}$, 
and we set $\zeta_{k,l,m}:=\varepsilon_{k,l,m}+\varepsilon_{l,k,m}$ if $k>l$ and $\zeta_{k,k,m}:=\varepsilon_{k,k,m}$. 

\begin{lem}
\label{independence}
\begin{enumerate}[(i)]
\item The classes $\varepsilon_{k,l,m}$,
where $(k,l,m)$ ranges over the triples of nonnegative integers such that $k+l+m=d$,
form a $\Z$-basis of $D$.
\item The classes $\zeta_{k,l,m}$, where $(k,l,m)$ ranges over the triples of nonnegative integers such that $k\geq l$ and $k+l+m=d$, form a $\Z$-basis of $E$.
\item The subgroups $E\cap \extp^{2d}F$ and $2E$ of $H^{2d}(A(\C)\times A'(\C),\Z)$ are equal.
\end{enumerate}
\end{lem}

\begin{proof}
By Lemma \ref{HodgesquareQ}, the integral classes listed in (i) span $\Hdg^{2d}(A(\C)\times A'(\C),\Q)$ as a $\Q$-vector space. To prove (i), it remains to show that these classes are linearly independent, and that they span a primitive sublattice of $\Hdg^{2d}(A(\C)\times A'(\C),\Z)$. Both assertions follow from the fact that, when one decomposes them in a basis of $H^{2d}(A(\C)\times A'(\C),\Z)=\extp^{2d}(\Z e_1\oplus\dots\oplus\Z f'_d)$ consisting of wedges of elements of $\{e_1,\dots, f'_d\}$, the coefficient of the basis element which equals
\begin{equation}
\label{basiselement}
\big(\bigwedge_{i=1}^{k}e_i\wedge f_i\big)\wedge \big(\bigwedge_{i=k+1}^{k+l}e'_i\wedge f'_i\big)\wedge\big(\bigwedge_{i=k+l+1}^{d}e_i\wedge f'_i\big)
\end{equation}
up to a sign is nonzero only in the decomposition of $\varepsilon_{k,l,m}$.

Assertion (ii) follows from (i) and from the fact that the $\Z/2$-action exchanges~$\lambda$ and $\lambda'$ and preserves $\mu$. 

We now prove (iii). 
That $2E\subset E\cap \extp^{2d}F$ is obvious, and we check the reverse inclusion. Let $Q$ be the quotient of $H^{2d}(A(\C)\times A'(\C),\Z)= \extp^{2d}(\Z e_1\oplus\dots\oplus\Z f'_d)$ by the subgroup generated by elements of the form 
$(e_1-e'_1)\wedge g_2\wedge\dots\wedge g_{2d}$  with $g_2,\dots, g_{2d}\in \{e_1,\dots, f'_d\}$ on the one hand, and by elements of the form $g_1\wedge \dots\wedge g_{2d}$ with $g_1,\dots, g_{2d}\in \{e_1,\dots, f'_d\}\setminus\{e_1,e'_1\}$ on the other hand.
Consider the $\Z/2$\nobreakdash-basis of $Q\otimes_{\Z}\Z/2$ consisting of the images of the elements of the form $e_1\wedge g_2\wedge\dots\wedge g_{2d}$ with $g_2,\dots, g_{2d}\in \{e_1,\dots, f'_d\}\setminus\{e_1,e'_1\}$.
Decompose in this basis the images in $Q\otimes_{\Z}\Z/2$ of the classes $\zeta_{k,l,m}$ considered in (ii).
For $k\geq l$, the basis element~(\ref{basiselement}) appears with nonzero coefficient only in the decomposition of the image of
$\zeta_{k,l,m}$, with one exception: when $(k,l,m)=(1,0,d-1)$, the basis element (\ref{basiselement}) appears with nonzero coefficient only in the decomposition of the images of
$\zeta_{1,0,d-1}$ and of $\zeta_{0,0,d}$. This shows that the classes $\zeta_{k,l,m}$ are $\Z/2$-linearly independent in $Q\otimes_{\Z}\Z/2$. As the image of $\extp^{2d}F$ in $Q\otimes_{\Z}\Z/2$ is zero,
we deduce that all the coefficients appearing in the decomposition of an element of $E\cap \extp^{2d}F$ in the $\Z$-basis of $E$ described in (ii) must be even. This shows that $E\cap \extp^{2d}F\subset 2E$, as wanted.
\end{proof}

\begin{lem}
\label{parity}
If $\delta,\delta'\in E$, then $\deg(\delta\cdot\delta')$ is even.
\end{lem}

\begin{proof}
It suffices to prove the lemma when $\delta$ and $\delta'$ belong to the $\Z$-basis of $E$ described in Lemma \ref{independence} (ii). We may thus assume that $\delta=\zeta_{k,l,m}$ and $\delta'=\zeta_{k',l',m'}$. If $k>l$, then 
$$\deg(\delta\cdot\delta')=\deg((\varepsilon_{k,l,m}+\varepsilon_{l,k,m})\cdot\delta')
=2\deg(\varepsilon_{k,l,m}\cdot\delta')$$ 
by $\Z/2$-invariance of $\delta'$, and this number is even. The same argument applies if $k'>l'$.
Assume now that $k=l$ and $k'=l'$. Then one has
\begin{equation}
\label{binomials}
\delta\cdot\delta'=\zeta_{k,k,m}\cdot\zeta_{k',k',m'}=\binom{k+k'}{k}^2\binom{m+m'}{m}\varepsilon_{k+k',k+k',m+m'}.
\end{equation}
As a consequence of \cite[Lemme 1 p.~247]{Beauville} and of the projection formula applied to the morphism $A\times A'\to A'$, one computes that
\begin{equation}
\label{Fourier}
\begin{alignedat}{2}
\deg(\varepsilon_{k+k',k+k',m+m'})&=(-1)^{d-k-k'}\deg_{A'}\big(\frac{(\lambda')^{k+k'}}{(k+k')\,!}\cdot\frac{(\lambda')^{d-k-k'}}{(d-k-k')\,!}\big)\\
&=(-1)^{d-k-k'}\binom{d}{k+k'}.
\end{alignedat}
\end{equation}
Combining (\ref{binomials}) and (\ref{Fourier}) yields $\deg(\delta\cdot\delta')=(-1)^{d-k-k'}\binom{k+k'}{k}^2\binom{m+m'}{m}\binom{d}{k+k'}$. 

Assume for contradiction that this number is odd.  Let us say that two integers $n, n'\geq 0$ are \textit{dyadically disjoint} if their dyadic expansions do not share any nonzero digit. 
The formula for the $2$-adic valuation of the factorial appearing in \cite[p.~241]{Robert} 
%Kummer, \"{U}ber die {E}rg\"{a}nzungss\"{a}tze zu den allgemeinen {R}eciprocit\"{a}tsgesetzen, p.115-116
shows that  $\binom{n+n'}{n}$ is odd if and only if $n$ and $n'$ are dyadically disjoint.
 It follows that $k$ and $k'$ are dyadically disjoint, and that so are $m=d-2k$ and $m'=d-2k'$, and $k+k'$ and $d-k-k'$. As $d-2k$ and $d-2k'$ are dyadically disjoint, the integer $d$ is even. Write $d=2e$. Then $e-k$ and $e-k'$ are dyadically disjoint. As $k$ and $k'$, as well as $e-k$ and $e-k'$ and also $k+k'$ and $(e-k)+(e-k')$ are dyadically disjoint, the four integers $k$, $k'$, $e-k$ and $e-k'$ are pairwise dyadically disjoint. Since $k+(e-k)=k'+(e-k')$, this is impossible unless these four numbers all vanish. This contradicts the assumption that $d\geq 1$.
\end{proof}

 Let $G:=\Gal(\C/\R)$, and consider the $G$-module $\Z(j):=(\sqrt{-1})^j\Z\subset\C$. If $X$ is a variety over $\R$,  letting $G$ act both on $X(\C)$ and on $\Z(j)$ endows $H^k(X(\C),\Z(j))$ with an action of $G$. 

 \begin{prop}
 \label{resab}
 For all $d\geq 1$, there exists an abelian variety $X$ of dimension $2d$ over $\R$ and a class $\beta\in \CH^d(X)$ with the following properties.
 \begin{enumerate}[(i)]
 \item If $\gamma,\gamma'\in H^{2d}(X(\C),\Z(d))$ are Hodge and $G$-invariant, then $\deg(\gamma\cdot\gamma')$ is even.
 \item $\deg (\beta^2)\equiv 2\pmod 4$.
 \item $\cl_{\R}(\beta)=0\in H_d(X(\R),\Z/2)$.
 \end{enumerate}
 \end{prop}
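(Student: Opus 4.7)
The plan is to construct $(X, \beta)$ as a Weil restriction of scalars from $\C$ to $\R$. First I would dispose of the base case $d=1$ explicitly. Take $E$ a very general elliptic curve over $\C$, so that $\End(E)=\Z$ and $\Hom(E,E^\sigma)=0$, and set $X:=\Res_{\C/\R}(E)$, an abelian surface over $\R$ with $X_\C = E\times E^\sigma$. The Galois involution acts on $X(\C)$ by $(x,y)\mapsto(\tau(y),\tau(x))$, where $\tau\colon E^\sigma\to E$ is the canonical anti-holomorphic identification; tracking orientations of $\tau$ in cohomology shows that, for $e:=[E\times\{\mathrm{pt}\}]$ and $f:=[\{\mathrm{pt}\}\times E^\sigma]$, one has $\sigma^*e=-f$ and $\sigma^*f=-e$ in $H^2(X(\C),\Z)$. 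Combining with the sign from the Tate twist, $\beta:=e+f\in H^2(X(\C),\Z(1))^G$ is the cycle class of the $\sigma$-stable divisor $D:=(E\times\{0\})+(\{0\}\times E^\sigma)$. A direct computation gives $\deg(\beta^2)=\deg(2ef)=2$, yielding (ii). Genericity of $E$ restricts Hodge classes in $H^2(X_\C,\Q(1))$ to $\Q e\oplus \Q f$, whose $G$-invariants are $\Q\beta$, and all pairwise intersections of such classes are visibly even, giving (i). Finally $D(\R)=\{(0,0)\}$ is zero-dimensional, so $\cl_\R(\beta)=0$ in $H_1(X(\R),\Z/2)$, giving (iii).

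For general $d$, I would extend this by taking $X:=\Res_{\C/\R}(A)$ with $A$ a carefully chosen abelian variety of dimension $d$ over $\C$, so $X_\C=A\times A^\sigma$. The involution $\sigma$ acts as a signed swap of factors through the anti-holomorphic diffeomorphism $\tau\colon A^\sigma\to A$; an orientation argument on $\Theta_A$ shows $\tau^*\Theta_A=-\Theta_{A^\sigma}$, and the K\"unneth formula then gives $\sigma^*(\Theta_A^k\boxtimes \Theta_{A^\sigma}^{d-k})=(-1)^d\,\Theta_A^{d-k}\boxtimes \Theta_{A^\sigma}^k$. Combined with the $(-1)^d$ from the twist $\Z(d)$, the Galois action on middle cohomology becomes the pure swap of factors, so $G$-invariant Hodge classes are symmetric combinations. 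One then identifies a natural candidate $\beta$ by symmetrizing and computes $\deg(\beta^2)$ by the K\"unneth rule, verifying (i) by checking that all K\"unneth intersection pairings among $G$-invariants reduce to even integers, and (ii) by adjusting $A$ so that the relevant intersection is $\equiv 2\pmod 4$.

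The central difficulty is that (i) and (ii) pull in opposite directions: a very general $A$ makes (i) automatic but forces $\deg(\beta^2)$ to be divisible by $(d!)^2$ (hence by $4$ as soon as $d\ge 2$), while a decomposable $A$ such as $E^d$ produces extra Hodge classes on $X_\C$ --- notably the point-class pullbacks $\pi_i^*[0_{X_1}]$ --- whose mutual intersections can be odd and thus violate (i). I would resolve this by taking $A=E\times A'$ with $E$ very general and $A'$ very general of dimension $d-1$, and building $\beta$ from the $d=1$ class $\beta_1$ on $X_1:=\Res_{\C/\R}(E)$ tensored with a suitable $\gamma$ on $\Res_{\C/\R}(A')$; then $\deg(\beta^2)=2\cdot\deg(\gamma^2)$, reducing (ii) to producing one class with odd self-intersection on the complementary factor, while the restrictive Hodge lattice of $A'$ prevents extra odd intersections on the mixed $X_\C$. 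Property (iii) would follow, as in the $d=1$ case, by exhibiting a representative cycle whose real locus has dimension strictly less than $d$, so its Borel--Haefliger class vanishes automatically in $H_d(X(\R),\Z/2)$.
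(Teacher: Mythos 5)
Your overall strategy---Weil restriction of a very general (principally polarized) abelian variety, the swap Galois action on $A\times A'$, and Mattuck plus K\"unneth to control the $G$-invariant Hodge classes---is exactly the paper's, and your $d=1$ case is essentially correct. But for $d\geq 2$ you manufacture a difficulty that does not exist, and your proposed workaround fails. The ``central difficulty'' you describe rests on a miscalculation: the cycle $A\times\{0\}\cup\{0\}\times A'$ descends to a subvariety $Z\subset X=\Res_{\C/\R}(A)$ for \emph{every} $d$, and its cohomology class is $1\otimes\frac{(\lambda')^d}{d!}+\frac{\lambda^d}{d!}\otimes 1$ --- the \emph{point classes} of the factors, which are integral and trivially algebraic --- not $1\otimes(\lambda')^d+\lambda^d\otimes 1$. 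Hence $\deg(\beta^2)=2\deg\big([A\times\{0\}]\cdot[\{0\}\times A']\big)=2$ exactly; there is no $(d!)^2$ divisibility, and the very general $A$ gives (i), (ii) and (iii) simultaneously (for (iii), the normalization of $Z$ has empty real locus because conjugation swaps the two components). This is the paper's construction.

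Worse, your replacement construction cannot be repaired as stated. You take $A=E\times A'$ and $\beta=pr_1^*\beta_1\cdot pr_2^*\gamma$ with $\gamma\in\CH^{d-1}(\Res_{\C/\R}(A'))$, and you need $\deg(\gamma^2)$ odd. But $\Res_{\C/\R}(A')$ has dimension $2(d-1)$, so $\gamma$ is a middle-codimension cycle defined over $\R$; its class in $H^{2(d-1)}(\,\cdot\,(\C),\Z(d-1))$ is therefore a $G$-invariant Hodge class, and for $A'$ very general the same Mattuck--K\"unneth computation you invoke for (i) --- i.e.\ statement (i) of the proposition in dimension $2(d-1)$ --- shows that \emph{all} such classes pair evenly with each other. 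So $\deg(\gamma^2)$ is forced to be even and your $\beta$ satisfies $\deg(\beta^2)\equiv 0\pmod 4$, violating (ii). Relaxing the genericity of $A'$ to create an odd self-intersection reintroduces exactly the extra Hodge classes you are trying to avoid in (i), and your assertion that ``the restrictive Hodge lattice of $A'$ prevents extra odd intersections'' is not substantiated. The fix is simply to abandon the splitting and take $\beta=[Z]$ for the very general $A$ as above.
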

 
 \begin{proof}
 Let $(A,\lambda)$ be a very general principally polarized abelian variety of dimension~$d$ which is defined over~$\R$ (by Baire's theorem, one may choose a very general real point of the moduli space $M$ of $d$-dimensional principally polarized abelian varieties with level $3$ structure, since $M$ is a smooth variety).  Write $x\mapsto \bar{x}$ for the action of complex conjugation on $A(\C)$.

Let $(A',\lambda')$ be another copy of $(A,\lambda)$.  The real abelian variety $A\times A'$ has set of complex points $A(\C)\times A'(\C)$ with an action of complex conjugation given by $(x,y)\mapsto (\overline{x},\overline{y})$.
Define ${X:=\Res_{\C/\R}(A)}$ to be the Weil restriction of scalars of $A_{\C}$. It is the abelian variety over $\R$ whose set of complex points is $X(\C)=A(\C)\times A'(\C)$, with an action of complex conjugation given by $(x,y)\mapsto (\overline{y},\overline{x})$.
The subvariety $A_{\C}\times\{0\}\cup\{0\}\times A'_{\C}$ of $A_{\C}\times A'_{\C}$ descends to a subvariety $Z\subset X$, and we define $\beta:=[Z]\in\CH^d(X)$. Since the normalization of~$Z$ has no real points, one has $\cl_{\R}(\beta)=0$. Moreover, $\deg(\beta^2)=2$. Assertions (ii) and~(iii) are proven.

Let $\mu\in H^2(A(\C)\times A'(\C),\Z(1))$ be the class of the Poincar\'e bundle. 
As the cycle class map $\CH^1(A_{\C}\times A'_{\C})\to H^{2}(A(\C)\times A'(\C),\Z(1))$ is $G$-equivariant, 
the classes $\lambda$, $\lambda'$ and $\mu$ all belong to $H^{2}(A(\C)\times A'(\C),\Z(1))^G$.  It then follows from Lemma \ref{independence} (i) that the group $\Hdg^{2d}(A(\C)\times A'(\C),\Z(d))$ consists exclusively of $G$-invariant classes. 

As the action of complex conjugation on $X(\C)$ is the composition of the action of complex conjugation on $A(\C)\times A'(\C)$ and of the exchange of the two factors,  we deduce that group of $G$-invariant Hodge classes in $H^{2d}(X(\C),\Z(d))$  is exactly the group $E$ described in Lemma \ref{independence} (ii). Assertion (i) now follows from Lemma~\ref{parity}.
\end{proof}
  
  The next proposition is a variant of Proposition \ref{resab} which works over the complex numbers, but which is slightly more complicated.

 \begin{prop}
 \label{quotab}
 For all $d\geq 1$, there exists a $2d$-dimensional smooth projective variety $X$ over $\C$ and a class $\beta\in \CH^d(X)$ with the following properties.
 \begin{enumerate}[(i)]
 \item If $\gamma,\gamma'\in H^{2d}(X(\C),\Z)$ are Hodge, then $\deg(\gamma\cdot\gamma')$ is even.
 \item One has $\deg (\beta^2)\equiv 2\pmod 4$.
\item All higher Chern classes of $X$ are torsion, i.e., $c(X)=1\in\CH^*(X)\otimes_{\Z}\Q$.
 \end{enumerate}
 \end{prop}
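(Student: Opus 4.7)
The plan is to construct $X$ as a finite free étale quotient of an abelian variety $Y$ of complex dimension $2d$, which automatically gives (iii): since $\pi\colon Y\to X$ is étale and $c(Y)=1$, we have $\pi^*c(X)=1$, so $c(X)$ is torsion in $H^*(X,\Z)$.

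When $\alpha(d)=1$, i.e.\ $d=2^m$, I would simply take $X$ itself to be a very general principally polarized abelian variety of dimension $2d$ and $\beta:=\lambda^d/d!$. Mattuck's theorem gives $H^{2d}(X,\Z)_{\mathrm{Hodge}}=\Z\beta$, and by Kummer's theorem $v_2\binom{2d}{d}=\alpha(d)=1$, so $\deg\beta^2=\binom{2d}{d}\equiv 2\pmod 4$. This gives (i) (the form is $\Z\cdot\binom{2d}{d}$, which is even) and (ii) simultaneously.

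For general $d$, set $k:=\alpha(d)$ and $Y:=A\times E^{k-1}$, where $A$ is a very general ppav of dimension $2d-k+1$ and $E$ is an elliptic curve. Let $G\simeq(\Z/2)^{k-1}$ act freely on $Y$ via involutions $\sigma_i$, each combining a translation of $A$ by a nonzero $2$-torsion point $t_i$ (the $t_i$ chosen $\F_2$-linearly independent in $A[2]$, which is possible since $\dim_{\F_2}A[2]=2\dim A$) with the inversion of the $i$-th elliptic factor. Set $X:=Y/G$. By Mattuck, the $G$-invariant integral Hodge classes in $H^{2d}(Y,\Z)$ are freely generated by
$$\alpha_S:=\frac{\lambda_A^{d-|S|}}{(d-|S|)!}\otimes\bigotimes_{i\in S}[\mathrm{pt}_{E_i}],\qquad S\subseteq\{1,\dots,k-1\},$$
with $\alpha_S\cdot\alpha_T=\binom{2d-k+1}{d-|S|}$ when $T=\complement S$ and $0$ otherwise. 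Defining $\beta\in\CH^d(X)$ as the pushforward to $X$ of a sum of $G$-symmetric cycle representatives of the $\alpha_S$ and applying Vandermonde's identity gives
$$\deg_X(\beta^2)\;=\;\frac{1}{2^{k-1}}\sum_{S}\binom{k-1}{|S|}\binom{2d-k+1}{d-|S|}\;=\;\frac{\binom{2d}{d}}{2^{k-1}};$$
by Kummer $v_2\binom{2d}{d}=k$, so $\deg\beta^2\equiv 2\pmod 4$, giving (ii).

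The main obstacle is condition (i), which demands that the intersection form on the full integral Hodge lattice $H^{2d}(X,\Z)_{\mathrm{Hodge}}$ be even. The naive form on $H^{2d}(Y,\Z)^G_{\mathrm{Hodge}}$ divided by $|G|$ has entries $\binom{2d-k+1}{d-|S|}/2^{k-1}$ that can be odd (already for $d=3$, $k=2$ one gets $\binom{5}{3}/2=5$), so (i) is not visible at the level of $Y$. The resolution is to identify, via the Hochschild--Serre spectral sequence for the $G$-cover $Y\to X$ (equivalently, the long exact sequence associated with the decomposition $\pi_*\Z_Y=\Z_X\oplus\mathcal L$), the precise image of $\pi^*\colon H^{2d}(X,\Z)_{\mathrm{Hodge}}\hookrightarrow H^{2d}(Y,\Z)^G_{\mathrm{Hodge}}$; one expects this image to be a sublattice containing $\sum_S\alpha_S$ but not the individual $\alpha_S$ themselves, on which the restricted form is even by a $2$-adic bookkeeping based on Kummer's theorem applied to the various $\binom{2d-k+1}{d-|S|}$. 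This pullback-lattice analysis is where the proof genuinely goes beyond the Weil-restriction case of Proposition~\ref{resab}, justifying the paper's remark that the complex version is "slightly more complicated."
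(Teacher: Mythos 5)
There is a genuine gap, and it occurs at the most basic level: the statement requires $\beta\in\CH^d(X)$, i.e.\ an \emph{algebraic} cycle class, but your $\beta$ is only a Hodge class. Already in your base case $\alpha(d)=1$ you take $X$ a very general principally polarized abelian variety of dimension $2d$ and $\beta=\lambda^d/d!$; for $1<d<2d$ this is an intermediate minimal class, which is not known to be algebraic on a very general ppav (and is expected not to be). The same problem infects the general case: the classes $\alpha_S$ involve $\lambda_A^{d-|S|}/(d-|S|)!$ with $1<d-|S|<\dim A$, so "cycle representatives of the $\alpha_S$" are not available. The paper avoids this entirely by taking $\beta$ to be the class of an explicit subvariety $p(A\times\{0\})$ of the $\Z/4$-quotient $X$ of $A\times A'$ (with $\dim A=d$), whose pullback $[A\times\{0\}]+[A\times\{\tau'\}]+[\{0\}\times A']+[\{\tau\}\times A']$ only involves fundamental and point classes; the intermediate minimal classes $\delta_k$ enter only in the verification of (i), where algebraicity is irrelevant.

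Two further problems. First, your degree computation is internally inconsistent: if $\beta=\pi_*[Z]$ for a $G$-invariant cycle $Z$ with $[Z]=\sum_S\alpha_S$, then $\pi^*\beta=|G|\sum_S\alpha_S$ and the projection formula gives $\deg_X(\beta^2)=2^{k-1}\binom{2d}{d}$, which is divisible by $4$ as soon as $k\geq 2$; the formula $\binom{2d}{d}/2^{k-1}$ you want requires instead that $\sum_S\alpha_S$ itself lie in $\Ima(\pi^*)$ and that the descended class be algebraic --- neither is established. Second, you explicitly leave (i) as an expectation ("one expects this image to be a sublattice\dots"), but identifying $\Ima(\pi^*)$ inside the $G$-invariant Hodge lattice and checking evenness of the form there is precisely the crux of the argument; in the paper this is done by an explicit computation showing that the invariant Hodge classes meeting $\Ima(q^*)$ (pullbacks from the intermediate abelian variety $B$, through which $p^*$ factors) are twice the invariant Hodge lattice, because the $\delta_k$ remain linearly independent in $\bigwedge^*\bigl((\Z e_1\oplus\dots\oplus\Z f'_d)/\langle e_1-e'_1\rangle\bigr)\otimes\Z/2$. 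Your quotient by $(\Z/2)^{k-1}$ acting through inversions on elliptic factors provides no analogous mechanism forcing divisibility of pulled-back classes, so even the strategy for (i) is unclear, not merely the bookkeeping.
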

 
 \begin{proof}
 Let $(A,\lambda)$ be a very general principally polarized abelian variety of dimension~$d$ over~$\C$. Let $e_1,\dots,e_d,f_1,\dots,f_d\in H^1(A(\C),\Z)$ be a basis such that $\lambda=\sum_i e_i\wedge f_i$. Let $\tau\in A(\C)[2]\simeq H_1(A(\C),\Z/2)$ be the $2$-torsion point associated with the morphism $H^1(A(\C),\Z)\to\Z/2$ sending $e_1$ to $1$ and $e_2,\dots,e_d,f_1,\dots,f_d$ to $0$. Denote by $(A',\lambda'=\sum_ie'_i\wedge f'_i,\tau')$ another copy of $(A,\lambda=\sum_i e_i\wedge f_i,\tau)$. 
 
 Let $\Z/4$ act on $A\times A'$ via $(x,x')\mapsto (x'+\tau,x)$. Let $p:A\times A'\to X$ (resp. $q:A\times A'\to B$) be the quotient of $A\times A'$ by $\Z/4$ (resp. by the subgroup $\Z/2\subset \Z/4$). Since $\Z/2$ acts on $A\times A'$ via $(x,x')\mapsto(x+\tau,x'+\tau')$, we see that~$B$ is an abelian variety, and that $q^*(H^1(B(\C),\Z))\subset H^1(A(\C)\times A'(\C),\Z)$  is the subgroup generated by $e_2,\dots,e_d,f_1,\dots,f_d,e'_2,\dots,e'_d,f'_1,\dots,f'_d,2e_1, e_1+e'_1,2e'_1$.

Assertion (iii) follows at once from the fact that $c(A\times A')=1\in \CH^*(A\times A')$ since $p:A\times A'\to X$ is finite \'etale.

 Consider $Z:=p(A\times\{0\})\subset A\times A'$, and define $\beta:=[Z]\in\CH^d(X)$. As $p^*\beta=[A\times\{0\}]+[A\times\{\tau'\}]+[\{0\}\times A']+[\{\tau\}\times A']$, one computes that $\deg(p^*\beta^2)=8$. Since $\deg(p)=4$, one has $\deg(\beta^2)=2$. This proves assertion (ii).

Let $\gamma,\gamma'\in \Hdg^{2d}(X(\C),\Z)$ be Hodge classes. 
 As translations act trivially on the cohomology of $A\times A'$, the automorphism $(x,x')\mapsto (x'+\tau,x)$ acts in cohomology as $(x,x')\mapsto (x',x)$. It follows that $\delta:=p^*\gamma$ and $\delta':=p^*\gamma'$ are invariant under the involution of $\Hdg^{2d}(A(\C)\times A'(\C),\Z)$ exchanging the two factors. In other words, the classes~$\delta$ and $\delta'$ belong to the group denoted by $E$ in Lemma \ref{independence}. Since moreover~$\delta$ and~$\delta'$ belong to $\Ima(q^*)$,  Lemma \ref{independence} (iii) shows that $\delta$ and $\delta'$ are divisible by $2$ in $E$. Lemma \ref{parity} now implies that $\deg(\delta\cdot \delta')\equiv 0\pmod 8$. Since $\deg(p)=4$, we deduce that $\deg(\gamma\cdot \gamma')\equiv 0\pmod 2$, which proves~(i).
  \end{proof}

 \subsection{High-dimensional cycles}
 \label{high}

Here are applications to Chow groups.

\begin{thm}
\label{Chowth3}
Let $d\geq c$ be such that $\alpha(c+1)\geq 3$. Then there exists an abelian variety $X$ of dimension $c+d$ over $\R$ such that $\CH_d(X) $ is not generated by classes of closed subvarieties of $X$ that are smooth along their real loci.
 \end{thm}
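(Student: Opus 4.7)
The plan is to prove the theorem in two stages: first handle the key case $c=d$ by combining Proposition~\ref{resab}, Fulton's refined double point formula, and the Rees--Thomas divisibility of the top Segre number; then reduce the general case $d\geq c$ to $c=d$ by a transversality argument.

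\emph{The case $c=d$.} I take $X$ and $\beta\in\CH^d(X)=\CH_d(X)$ as provided by Proposition~\ref{resab}, and suppose for contradiction that $\beta=\sum_i n_i[Y_i]$ with $Y_i\subset X$ closed integral subvarieties of dimension $d$ smooth along $Y_i(\R)$. For each $i$ I pick a resolution of singularities $W_i\to Y_i$ that is an isomorphism over $Y_i(\R)$, and let $g_i:W_i\to X$ be the induced morphism of smooth projective varieties over~$\R$. Since $Y_i(\R)=Y_i(\C)\cap X(\R)$ lies in the smooth locus of $Y_i$, one has $g_i(\C)^{-1}(X(\R))=W_i(\R)$ and $g_i$ is a closed embedding in a Zariski neighbourhood of $W_i(\R)$, so Proposition~\ref{FL}(ii) applies and yields
\[
\deg\bigl(((g_i)_*[W_i])^2\bigr)\equiv\deg(c_d(N_{W_i/X}))\pmod 4.
\]
Because $X$ is abelian, $T_X$ is trivial, so $c(N_{W_i/X})=s(T_{W_i})$ and the right-hand side equals the top Segre number $s_d(W_i)$. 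The hypothesis $\alpha(c+1)=\alpha(d+1)\geq 3$ combined with Theorem~\ref{RT} for $e=2$ forces $s_d:MU_{2d}\to\Z$ to be divisible by $4$, hence $\deg(((g_i)_*[W_i])^2)\equiv 0\pmod 4$. For $i\neq j$, the classes $[Y_i],[Y_j]\in H^{2d}(X(\C),\Z(d))$ are Hodge and $G$-invariant, so Proposition~\ref{resab}(i) shows that $\deg([Y_i]\cdot[Y_j])$ is even. Expanding
\[
\deg(\beta^2)=\sum_i n_i^2\deg([Y_i]^2)+2\sum_{i<j}n_in_j\deg([Y_i]\cdot[Y_j])
\]
now shows that every summand is divisible by $4$, contradicting $\deg(\beta^2)\equiv 2\pmod 4$ from Proposition~\ref{resab}(ii).

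\emph{Reduction to $c=d$.} Let $(X_0,\beta_0)$ denote the pair just produced, with $\dim X_0=2c$. Fix an elliptic curve $E/\R$, set $B:=E^{d-c}$, $X:=X_0\times B$, and $\beta:=\pi_1^*\beta_0\in\CH_d(X)$; then $X$ is an abelian variety of dimension $c+d$. If $\beta=\sum_i n_i[Y_i]$ with $Y_i\subset X$ smooth along $Y_i(\R)$, the $\ci$ Sard theorem applied to each smooth map $\pi_2|_{Y_i(\R)}:Y_i(\R)\to B(\R)$ yields a full-measure subset of $B(\R)$ at whose points every $Y_i$ is transverse to $X_0\times\{b\}$ at real intersection points. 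Restricting further to a suitable Zariski-open subset of $B$ guarantees that each $Y_i\cap(X_0\times\{b\})$ is proper of dimension $c$; a common choice of $b$ exists because $B(\R)$ is Zariski-dense in $B$. For such $b$, the pullback $s_b^*[Y_i]$ along the regular embedding $s_b:X_0\hookrightarrow X$, $x\mapsto(x,b)$, is represented by a $c$-dimensional cycle on $X_0$ each of whose integral components is smooth along its real locus (trivially if the real locus is empty; by transversality otherwise). Since $s_b^*\beta=\beta_0$, this exhibits $\beta_0$ as a $\Z$-linear combination of classes of subvarieties of $X_0$ smooth along their real loci, contradicting the case $c=d$.

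\emph{Main obstacle.} The crux is the $c=d$ case, which hinges on two essentially independent inputs: the real-only variant of the double point formula (Proposition~\ref{FL}(ii)), whose hypothesis precisely matches the weaker assumption that each $Y_i$ be smooth only along $Y_i(\R)$, and the divisibility $4\mid s_d$ from Theorem~\ref{RT}, which pins down the numerical condition $\alpha(c+1)\geq 3$. The cross-terms $\deg([Y_i]\cdot[Y_j])$ are handled by the Hodge-theoretic input of Proposition~\ref{resab}(i) and are routine; the delicate point is controlling the self-intersection terms modulo~$4$ rather than merely modulo~$2$.
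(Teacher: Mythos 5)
Your proposal is correct and follows essentially the same route as the paper: the $c=d$ case via Proposition~\ref{resab}, Proposition~\ref{FL}(ii) applied to a resolution that is an isomorphism over the real locus, triviality of $T_X$, and Theorem~\ref{RT} with $e=2$, followed by a product-and-slice reduction for $d>c$. The only cosmetic differences are that the paper justifies the $G$-invariance of the classes $[Y_i]$ by invoking Krasnov's equivariant cycle class map (which you assert without citation), and that your Sard/transversality argument for the reduction is just a spelled-out version of the paper's appeal to Bertini.
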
 

\begin{proof}
Suppose first that $d=c$, and let $X$ and $\beta$ be as in Proposition \ref{resab}. Assume for contradiction that $\beta=\sum_i n_i[Y_i]\in \CH_d(X)$ where $n_i\in\Z$ and the $Y_i$ are integral closed subvarieties of $X$ that are smooth along their real loci. One computes:
\begin{equation}
\label{congChow}
\deg(\beta^2)=\sum_i n_i^2\deg([Y_i]^2)+2\sum_{i<j}n_in_j\deg([Y_i]\cdot[Y_j]).
\end{equation} 
The existence of Krasnov's cycle class map $\cl:\CH_d(X)\to H^{2d}_G(X(\C),\Z(d))$ to $G$-equivariant Betti cohomology refining the usual complex cycle class map ${\cl_{\C}:\CH_d(X)\to H^{2d}(X(\C),\Z(d))}$ to Betti cohomology \cite[Theorem 0.6]{Krasnov2}, the fact that the image of $\cl_{\C}$ consists of Hodge classes, and assertion (i) of Proposition~\ref{resab}, combine to show that $2\sum_{i<j}n_in_j\deg([Y_i]\cdot[Y_j])$ is divisible by $4$.
Let $g_i:W_i\to Y_i$ be a resolution of singularities which is an isomorphism above~$Y_i(\R)$. Proposition \ref{FL} (ii) and the fact that all higher Chern classes of the abelian variety~$X$ vanish show that $\deg([Y_i]^2)\equiv \deg(s_d(W_i))\pmod 4$. Since $\alpha(d+1)\geq 3$ by hypothesis, Theorem \ref{RT} implies that $\deg(s_d(W_i))=s_d([W_i(\C)])\equiv 0\pmod 4$. These congruences and assertion (ii) of Proposition \ref{resab} contradict (\ref{congChow}).

To deal with the general case, apply the $d=c$ case to get a smooth projective variety $X'$ of dimension $2c$ over $\R$ and a class $\beta'\in\CH_c(X')$ that is not a linear combination of classes of subvarieties of $X'$ that are smooth along their real loci. Define $X:=X'\times A$ where $A$ is any abelian variety of dimension $d-c$ over $\R$, and $\beta:=pr_1^*\beta'\in\CH_d(X)$. That $\beta$ is not a linear combination of classes of subvarieties of $X$ that are smooth along their real loci follows from the corresponding property of $\beta'$ and from the Bertini theorem.
\end{proof} 

  \begin{thm}
 \label{Chowth4}
If $d\geq c$ are such that $\alpha(c+1)\geq 2$, there exists an abelian variety~$X$ of dimension $c+d$ over $\R$ such that $\Ker \big[\cl_{\R}:\CH_d(X)\to H_d(X(\R),\Z/2)\big]$  is not generated by classes of closed subvarieties of $X$ with empty real loci.
\end{thm}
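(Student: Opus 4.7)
The plan is to adapt the proof of Theorem \ref{Chowth3} by substituting Theorem \ref{divtop} for Theorem \ref{RT}: the weaker hypothesis $\alpha(c+1)\geq 2$ is exactly what the $e=1$ case of Theorem \ref{divtop} requires, provided one can exploit the assumption $Y_i(\R)=\emptyset$ to force the parity vanishing of all Chern numbers of a resolution of $Y_i$.

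I first treat the case $d=c$. Take $X$ and $\beta\in\CH^c(X)$ from Proposition \ref{resab}; assertion (iii) ensures $\cl_{\R}(\beta)=0$. Assume for contradiction that $\beta=\sum_i n_i[Y_i]$ with the $Y_i$ integral and $Y_i(\R)=\emptyset$, and expand
\begin{equation*}
\deg(\beta^2)=\sum_i n_i^2\deg([Y_i]^2)+2\sum_{i<j}n_in_j\deg([Y_i]\cdot[Y_j]).
\end{equation*}
The cohomology classes $\cl_{\C}([Y_i])$ are Hodge and $G$-invariant via Krasnov's equivariant cycle class map, so Proposition \ref{resab}(i) makes the mixed sum divisible by $4$. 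For the diagonal terms, choose a resolution $g_i\colon W_i\to Y_i\hookrightarrow X$; since $Y_i(\R)=\emptyset$ one has both $W_i(\R)=\emptyset$ and $g_i(\C)^{-1}(X(\R))=\emptyset$, so Proposition \ref{FL}(ii) applies vacuously and, using $c(T_X)=1$ for the abelian variety $X$, yields $\deg([Y_i]^2)\equiv s_d([W_i(\C)])\pmod 4$.

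The crucial step is to prove $s_d([W_i(\C)])\equiv 0\pmod 4$. Because $W_i(\R)=\emptyset$, the Galois group $G$ acts freely on $W_i(\C)$ and the associated real line bundle on $W_i(\C)/G$ has unit disc bundle with boundary $W_i(\C)$; hence $[W_i(\C)]=0$ in $MO_{2d}$. The relation $\phi=\psi^2$ then forces $\psi([W_i(\C)])^2=0$ in $MO_{2d}$, so $\psi([W_i(\C)])=0$ in $MO_d$ because $MO_*$ is polynomial over $\Z/2$ and therefore reduced. By Lemma \ref{cw} every Chern number of $W_i(\C)$ is even, and Theorem \ref{divtop} applied with $e=1$ (the hypothesis $\alpha(d+1)=\alpha(c+1)\geq 2$ being satisfied) expresses $\tfrac{s_d}{2}$ modulo $2$ as an integer linear combination of Chern numbers, giving $s_d([W_i(\C)])\equiv 0\pmod 4$. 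Combining everything yields $\deg(\beta^2)\equiv 0\pmod 4$, contradicting Proposition \ref{resab}(ii).

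For general $d>c$, let $(X',\beta')$ be the pair just built for the case $d=c$, let $A$ be any abelian variety of dimension $d-c$ over $\R$, and set $X:=X'\times A$ and $\beta:=pr_1^*\beta'\in\CH_d(X)$; then $\cl_{\R}(\beta)=0$. Any hypothetical decomposition $\beta=\sum_j m_j[Z_j]$ with $Z_j$ integral and $Z_j(\R)=\emptyset$ would, after intersecting with $pr_2^*[y]$ at a general real point $y\in A(\R)$ and pushing forward by $pr_1$, produce the same type of decomposition for $\beta'$ on $X'$ (empty real loci pass to intersections with fibres over $y$ and to images under $pr_1|_{X'\times\{y\}}$), contradicting the $d=c$ case. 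The main obstacle is the diagonal congruence in the second paragraph: $\alpha(c+1)\geq 2$ is far weaker than the $\alpha(c+1)\geq 3$ used in Theorem \ref{Chowth3}, and the argument must leverage $W_i(\R)=\emptyset$ through the topological vanishing $\psi([W_i(\C)])=0$ in order to apply Theorem \ref{divtop}.
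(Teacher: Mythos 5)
Your proof is correct and follows essentially the same route as the paper: reduce to $d=c$ via Proposition \ref{resab}, use Krasnov's class and Proposition \ref{resab}(i) for the cross terms, Proposition \ref{FL}(ii) with $c(T_X)=1$ for the diagonal terms, and then deduce $s_d([W_i(\C)])\equiv 0\pmod 4$ from $\psi([W_i(\C)])=0$, Lemma \ref{cw} and Theorem \ref{divtop} with $e=1$. The only (harmless) deviation is that where the paper obtains $\psi([W_i(\C)])=0$ by citing Conner--Floyd's theorem that $W(\C)$ is cobordant to $W(\R)\times W(\R)$, you rederive the needed special case directly from the disc-bundle bounding argument for the free conjugation action together with $\phi=\psi^2$ and reducedness of $MO_*$.
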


\begin{proof}
The proof is almost identical to the proof of Theorem \ref{Chowth3}, replacing \textit{that are smooth along their real loci} by \textit{with empty real loci} everywhere. Only the argument used in the $d=c$ case to show that $s_d([W_i(\C)])\equiv 0\pmod 4$ needs to be modified, as follows. Since $W_i(\R)=\varnothing$, one has $\psi([W_i(\C)])=0\in MO_{d}$ by \cite[Theorem~22.4]{CF}. We deduce from Lemma \ref{cw} that all the Chern numbers of $[W_i(\C)]\in MU_{2d}$ are even. Theorem \ref{divtop} and the hypothesis that $\alpha(d+1)\geq 2$ imply that $s_d([W_i(\C)])\equiv 0\pmod 4$, as wanted.
\end{proof} 

  \begin{thm}
 \label{Chowth5}
If $d\geq c$ are such that $\alpha(c+1)\geq 3$, there exists a smooth projective variety~$X$ of dimension $c+d$ over $\C$ such that $\CH_d(X)$  is not generated by classes of smooth closed subvarieties of $X$.
\end{thm}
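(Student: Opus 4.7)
The plan is to follow the strategy of Theorem \ref{Chowth3}, replacing the role of Proposition \ref{resab} with its complex counterpart Proposition \ref{quotab} and working directly with cycle classes in singular cohomology rather than in equivariant cohomology. The hypothesis $\alpha(c+1)\geq 3$ will enter in exactly the same way as before, through Theorem \ref{RT}.

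First I would treat the case $d=c$. Let $X$ of dimension $2d$ over $\C$ and $\beta\in\CH^d(X)$ be provided by Proposition \ref{quotab}, and suppose for contradiction that $\beta=\sum_i n_i[Y_i]$ with $n_i\in\Z$ and each $Y_i\subset X$ a smooth integral closed subvariety. Expanding
$$\deg(\beta^2)=\sum_i n_i^2\deg([Y_i]^2)+2\sum_{i<j}n_in_j\deg([Y_i]\cdot[Y_j]),$$
I would analyze the two types of terms separately. Each cross term $\deg([Y_i]\cdot[Y_j])$ is the degree of a cup product of the images of $[Y_i]$ and $[Y_j]$ under the complex cycle class map, which are Hodge classes, so it is even by Proposition \ref{quotab}(i). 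Hence $2n_in_j\deg([Y_i]\cdot[Y_j])$ is divisible by $4$.

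For a diagonal term $\deg([Y_i]^2)$, Proposition \ref{FL}(i) applied to the smooth closed embedding $Y_i\hookrightarrow X$ yields $\deg([Y_i]^2)=\deg(c_d(N_{Y_i/X}))$. Here enters the main point of the argument: by Proposition \ref{quotab}(iii) one has $c(T_X|_{Y_i})=1$ in $\CH^*(Y_i)\otimes_\Z\Q$, so the Whitney sum relation $c(T_X|_{Y_i})=c(T_{Y_i})\cdot c(N_{Y_i/X})$ forces $c(N_{Y_i/X})=s(T_{Y_i})$ rationally, and hence the equality of integers $\deg(c_d(N_{Y_i/X}))=s_d([Y_i(\C)])$ in $MU_{2d}$. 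The hypothesis $\alpha(d+1)\geq 3$ combined with Theorem \ref{RT} applied with $e=2$ shows that $s_d:MU_{2d}\to\Z$ takes values divisible by $4$, so $\deg([Y_i]^2)\equiv 0\pmod 4$. Summing the congruences yields $\deg(\beta^2)\equiv 0\pmod 4$, contradicting Proposition \ref{quotab}(ii).

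To extend to arbitrary $d\geq c$, I would set $X:=X'\times A$ with $X'$ of dimension $2c$ carrying the class $\beta'$ just produced, and $A$ any smooth projective variety of dimension $d-c$ over $\C$; defining $\beta:=pr_1^*\beta'\in\CH_d(X)$, the obstruction transfers from $\beta'$ to $\beta$ via a Bertini argument (intersecting with general pullbacks of hyperplane sections from $A$ and pushing forward) exactly as at the end of the proof of Theorem \ref{Chowth3}. The main obstacle I foresee is the bridge between Fulton's class $c_d(N_{Y_i/X})$ and the top Segre number $s_d([Y_i(\C)])$ on which Theorem \ref{RT} is available: this conversion is only possible because Proposition \ref{quotab}(iii) has engineered $X$ to have rationally trivial total Chern class, a property which must be maintained simultaneously with the parity conditions (i) and (ii), and which is precisely what forces the construction in Proposition \ref{quotab} to be the finite \'etale quotient of a product of abelian varieties rather than a simpler model.
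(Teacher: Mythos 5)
Your proposal is correct and follows essentially the same route as the paper: the paper likewise treats the case $d=c$ using Proposition \ref{quotab}, handles the cross terms via Proposition \ref{quotab}(i), converts $\deg([Y_i]^2)$ into the top Segre number via the self-intersection formula (\cite[Corollary 6.3]{Fulton}, i.e.\ your Proposition \ref{FL}(i)) together with Proposition \ref{quotab}(iii), applies Theorem \ref{RT} with $e=2$, and reduces the general case to $d=c$ by taking a product and restricting to a general fiber.
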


\begin{proof}
The proof is similar to that of Theorem \ref{Chowth3}.
The argument at the end of the proof of Theorem \ref{Chowth3} shows that we may assume that $d=c$. 

Let $X$ and $\beta$ be as in Proposition \ref{quotab}. Assume that $\beta=\sum_i n_i[Y_i]\in \CH_d(X)$ where $n_i\in\Z$ and the $Y_i$ are smooth closed subvarieties of $X$, and consider equation~(\ref{congChow}).
Since the Betti cohomology classes of the $Y_i$ are Hodge, assertion (i) of Proposition~\ref{quotab} shows that $2\sum_{i<j}n_in_j\deg([Y_i]\cdot[Y_j])$ is divisible by $4$. Assertion~(iii) of Proposition \ref{quotab} and \cite[Corollary 6.3]{Fulton} together imply that ${\deg([Y_i]^2)=\deg(s_d(W_i))}$. Since $\alpha(d+1)\geq 3$, Theorem \ref{RT} implies that $\deg(s_d(W_i))$ is divisible by $4$. Assertion~(ii) of Proposition \ref{quotab} now contradicts (\ref{congChow}).
\end{proof}

 \subsection{Hypersurfaces in abelian varieties}
 
 We give here a geometric construction based on a Noether--Lefschetz argument, on which the proof of Theorem \ref{thji} relies.
 
 \begin{prop}
 \label{constrHodge}
 For all $d,e\geq 1$, there exists a $2d$-dimensional smooth projective variety $X$ over $\R$ with the following properties.
 \begin{enumerate}[(i)]
 \item The total Chern class $c(X)\in\CH^*(X)$ of $X$ satisfies $c(X)\equiv 1\pmod {2^{e+1}}$.
 \item  The subgroup of Hodge classes $\Hdg^{2d}(X(\C),\Z) \subset H^{2d}(X(\C),\Z)$ is generated by a class $\eta\in \Hdg^{2d}(X(\C),\Z)$ with $\deg (\eta^2)\equiv 0\pmod {2^{e+1}}$.
 \item One has $X(\R)\neq\varnothing$.
 \end{enumerate}
 \end{prop}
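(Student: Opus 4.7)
The plan is to realise $X$ as a very general smooth real hypersurface of a suitable highly divisible degree in an appropriately chosen real abelian variety.

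I would take $A$ to be an abelian variety over $\R$ of dimension $n:=2d+1$, equipped with a symmetric principal polarisation $L$ (so defined over $\R$, with class $\lambda:=c_1(L)$), with $A(\R)\neq\varnothing$ and such that the complexification $A_\C$ is very general among complex principally polarised abelian varieties, so that Mattuck's theorem (see \cite[Theorem~17.4.1]{BL}) gives $\Hdg^{2p}(A(\C),\Z)=\Z\langle\lambda^p/p!\rangle$ for $0\leq p\leq n$. A concrete candidate is the Jacobian of a very general real smooth projective curve of genus $2d+1$ admitting a real point. Having fixed such an $A$ and a point $a\in A(\R)$, I pick an integer $k\gg 0$, set $M:=L^{\otimes 2^{e+1}k}$, and take $X\subset A$ to be a very general smooth real hypersurface among real sections of $M$ vanishing at $a$.

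Property~(iii) is then automatic since $a\in X(\R)$. Property~(i) follows from the adjunction formula combined with the triviality of $T_A$:
\[
c(T_X)=\frac{c(T_A|_X)}{c(N_{X/A})}=\frac{1}{1+2^{e+1}k\,\lambda|_X}\equiv 1\pmod{2^{e+1}}
\]
in $\CH^*(X)$. For property~(ii), the weak Lefschetz theorem yields an injection $H^{2d}(A(\C),\Z)\hookrightarrow H^{2d}(X(\C),\Z)$ whose image on Hodge classes is, by Mattuck, equal to $\Z\langle\omega\rangle$ with $\omega:=(\lambda^d/d!)|_X$. The projection formula and the identity $\deg_A(\lambda^n)=n!$ give
\[
\deg(\omega^2)=2^{e+1}k\cdot\frac{(2d+1)!}{(d!)^2},
\]
which is divisible by $2^{e+1}$, as required. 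It then remains to show that no further Hodge classes occur on $X$, i.e.\ that the primitive part of $\Hdg^{2d}(X(\C),\Z)$ vanishes.

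This last step is where I expect the main obstacle to lie. I would handle it in two moves: first, Green's infinitesimal Noether--Lefschetz theorem ensures that, for $k$ large enough, the complex Noether--Lefschetz locus inside the linear system of sections of $M$ vanishing at $a$ is a countable union of proper closed analytic subsets; second, Baire category applied to the real locus of this linear system, which is a real projective space, produces real members avoiding this countable union. Two subsidiary points deserve attention along the way: arranging that the complexified Jacobian indeed lies outside Mattuck's exceptional Hodge locus (handled via density of the Torelli image in $A_g$, or via a direct argument on the moduli of real abelian varieties), and observing that the single linear constraint ``vanishing at $a$'' is harmless since the ambient linear system has arbitrarily large dimension as $k$ grows.
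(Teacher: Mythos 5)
Your construction is essentially the paper's: a very general real hypersurface of degree divisible by $2^{e+1}$ (a multiple of a principal polarization) in a very general real principally polarized abelian variety of dimension $2d+1$, with Mattuck's theorem plus weak Lefschetz identifying the non-primitive Hodge classes, the adjunction sequence giving $c(X)\equiv 1\pmod{2^{e+1}}$, and the same computation $\deg(\omega^2)=l\,(2d+1)!/(d!)^2$. The only notable divergence is in the Noether--Lefschetz step, where the paper takes a very general member of a Lefschetz pencil and kills the primitive Hodge classes by irreducibility of the monodromy action together with non-triviality of the Hodge structure on the vanishing cohomology (checking that $H^{2d}(A,\cO_A)\to H^{2d}(X,\cO_X)$ is not surjective), which avoids having to adapt Green's infinitesimal criterion from projective space to an abelian ambient variety; your Baire-category argument on the real linear system and your handling of $X(\R)\neq\varnothing$ via a real base point are fine.
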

 
 \begin{proof}
Arguing as in the proof of Proposition \ref{resab}, choose a very general principally polarized abelian variety $A$ of dimension $2d+1$ over $\R$.
The principal polarization of $A$ is represented by an ample line bundle $\cL$ on $A$ which is defined over $\R$ (see \cite[Theorem~4.1]{SS}). The group $\Hdg^{2d}(A(\C),\Z)$ of degree $2d$ Hodge classes on $A_{\C}$ is generated by $\frac{1}{d\,!}c_1(\cL)^{d}$ by Mattuck's theorem (see \cite[Theorem 17.4.1]{BL}) since $\frac{1}{d\,!}c_1(\cL)^{d}$ is a primitive integral cohomology class.

Let $l\gg 0$ be such that $2^{e+1}\,|\,l$ and $\cL^{\otimes l}$ is very ample. Choose a Lefschetz pencil of sections $\cL^{\otimes l}$, and let $X\subset A$ be a very general member of this pencil with $X(\R)\neq\varnothing$.

The restriction morphism $H^{2d}(A(\C),\Z)\to H^{2d}(X(\C),\Z)$
is injective with torsion free cokernel $\Lambda$, by the weak Lefschetz theorem \cite[Theorem 2]{AF}. Let $\Xi\subset \Lambda_{\C}$ be the subspace of Hodge classes. Since $X$ was chosen very general, any class in $\Xi$ remains Hodge when transported horizontally using the Gauss--Manin connection of the Lefschetz pencil. It follows that $\Xi$ is stabilized by the monodromy of the Lefschetz pencil on $\Lambda_{\C}$. Since this action is irreducible as a consequence of the hard Lefschetz theorem (see \cite[Theorem 7.3.2]{Lamotke}) and since the Hodge structure on~$\Lambda_{\C}$ is not trivial (as the restriction map $H^{2d}(A,\cO_A)\to H^{2d}(X,\cO_X)$ is not surjective), we deduce that $\Xi=0$. This shows that all Hodge classes in $H^{2d}(X(\C),\Z)$ are in the image of the injective restriction map $H^{2d}(A(\C),\Z)\to H^{2d}(X(\C),\Z)$, hence that $\Hdg^{2d}(X(\C),\Z)$ is generated by the restriction $\eta$ of $\frac{1}{d\,!}c_1(\cL)^{d}$. We now compute 
$\deg(\eta^2)=\deg(\frac{1}{(d\,!)^2}c_1(\cL)^{2d}\cdot c_1(\cL^{\otimes l}))=\frac{l(2d+1)!}{(d\,!)^2}$, which proves (ii).

 The normal exact sequence $0\to T_X\to (T_A)|_X\to \cL^{\otimes l}|_X \to 0$ finally shows that $c(X)=c(A)|_X\cdot c(\cL^{\otimes l}|_X)^{-1}=(1+lc_1(\cL)|_X)^{-1}\equiv 1\pmod{2^{e+1}}$, proving~(i).
 \end{proof}

\subsection{Submanifolds with no algebraic approximations}
\label{subnoapp}
 
Now come the prom\-ised applications to algebraic approximation.
 We refer to \S\ref{approximation} for the definition of the cobordism group $MO_d(X(\R))$ of the real locus of a smooth projective variety~$X$ over~$\R$, and of its subgroup $MO^{\alg}_d(X(\R))$ of algebraic cobordism classes. Recall that $MO_d$ denotes the cobordism ring, which is the cobordism group of the point (see \S\ref{cob}).
   
  \begin{thm}
  \label{thji}
  Assume that $c,d,e\geq 1$ are such that $d\geq c$ and $\alpha(c+e)=2e$. Then there exist a smooth projective variety $X$ of dimension $c+d$ over $\R$ and a $d$-dimensional closed $\ci$ submanifold $j:M\hookrightarrow X(\R)$ such that the following properties hold for all $d$-dimensional closed subvarieties $i:Y\hookrightarrow X$.
 \begin{enumerate}[(i)]
 \item One has $[j]\in MO^{\alg}_d(X(\R))$.
 \item  If $Y$ is smooth, then $[M]\neq[Y(\R)]\in MO_d$.
 \item If $e=1$ and $Y$ is smooth along $Y(\R)$, then $[M]\neq[Y(\R)]\in MO_d$.
   \end{enumerate}
\end{thm}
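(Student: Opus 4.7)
The plan is to mirror the arguments of Theorems~\ref{Chow3}--\ref{Chow5}: construct a smooth projective variety $X$ with severely restricted Hodge classes and with $c(X)\equiv 1\pmod{2^{e+1}}$ via Proposition~\ref{constrHodge}, choose $M\subset X(\R)$ realising a specific Stiefel--Whitney class, and use the double point formula (Proposition~\ref{FL}), the Segre-class divisibilities (Theorems~\ref{RT} and~\ref{divtop}), and the Chern--Stiefel--Whitney comparison (Lemma~\ref{cw}) to obstruct any equality $[Y(\R)]=[M]$ in $MO_d$. I would dispatch the case $d=c$ first and reduce the general case $d>c$ to it by a product/Bertini argument in the spirit of the proof of Theorem~\ref{Chowth3}.

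For $d=c$, apply Proposition~\ref{constrHodge} with $(d,e)$ to obtain $X$ of dimension $2c$ satisfying $c(X)\equiv 1\pmod{2^{e+1}}$, $\Hdg^{2c}(X(\C),\Z)=\Z\omega$ with $\deg(\omega^2)\equiv 0\pmod{2^{e+1}}$, and $X(\R)\neq\varnothing$. The hypothesis $\alpha(c+e)=2e$ together with Theorem~\ref{RT} ensures that $s_c:MU_{2c}\to\Z$ is divisible by $2^e$ but not by $2^{e+1}$, so Theorem~\ref{divtop} produces an integral linear combination $\chi$ of Chern numbers with $\chi\equiv s_c/2^e\pmod 2$; Lemma~\ref{cw} presents $\chi\pmod 2$ as $\mu\circ\psi$ for a nonzero linear form $\mu:MO_c\to\F_2$ of Stiefel--Whitney numbers. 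Fix any smooth compact $c$-manifold $N$ with $\mu([N])=1$ and embed it as a closed submanifold $j:M\hookrightarrow X(\R)$ inside a Euclidean chart of $X(\R)$, which is possible via the Whitney embedding theorem since $\dim X(\R)=2c=2d$.

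Property~(i) holds because $j$ is null-homotopic in the chart, so $[j]\in MO_c(X(\R))$ agrees with the class of a constant map $M\to\{p\}\subset X(\R)$; by Nash--Tognoli there is a smooth projective variety $W/\R$ of dimension $c$ with $[W(\R)]=[M]$, and the constant morphism $g:W\to X$ at~$p$ realises $[j]\in MO^{\alg}_c(X(\R))$. For~(ii)--(iii), suppose $i:Y\hookrightarrow X$ is a $c$-dimensional closed subvariety with $[Y(\R)]=[M]\in MO_c$, and pick a resolution $g:W\to Y$ that is an isomorphism over $Y(\R)$. Then $g:W\to X$ is an embedding globally if $Y$ is smooth, and an embedding in a neighbourhood of $g(\C)^{-1}(X(\R))=W(\R)$ when $Y$ is merely smooth along its real locus (the case~(iii), where $e=1$). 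Proposition~\ref{FL} yields
\begin{equation*}
\deg([Y]^2)\equiv\deg(c_c(N_{W/X}))\pmod{2^{e+1}}.
\end{equation*}
Adjunction combined with $c(X)\equiv 1\pmod{2^{e+1}}$ gives $c(N_{W/X})\equiv s(T_W)\pmod{2^{e+1}}$, so the right-hand side equals $s_c([W(\C)])\pmod{2^{e+1}}$; meanwhile $\cl_{\C}([Y])$ is a Hodge class, hence $=n\omega$ for some $n\in\Z$, so $\deg([Y]^2)=n^2\deg(\omega^2)\equiv 0\pmod{2^{e+1}}$. Combining, $s_c([W(\C)])/2^e\equiv 0\pmod 2$, and Theorem~\ref{divtop}, Lemma~\ref{cw}, and $\psi([W(\C)])=[W(\R)]=[M]$ force $\mu([M])=0$, contradicting the choice of~$N$.

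The main obstacle is the reduction from $d>c$ to $d=c$. A naive product $X:=X'\times A$ with $A$ abelian of dimension $d-c$ preserves $c(X)\equiv 1\pmod{2^{e+1}}$ but makes any product submanifold $M'\times A(\R)$ cobordant to zero, since $A(\R)$ is a positive-dimensional compact Lie group and hence $[A(\R)]=0$ in $MO_{d-c}$. I would instead take $X$ to be a very general smooth ample divisor, of class divisible by $2^{e+1}$, in a principally polarized abelian variety of dimension $c+d+1$: a Lefschetz/Noether--Lefschetz argument analogous to the proof of Proposition~\ref{constrHodge} identifies $\Hdg^{2c}(X(\C),\Z)$ with $\Z\cdot(\lambda^c/c!)|_X$, and pairing $[Y]^2\in H^{4c}(X,\Z)$ with $(\lambda^{d-c}/(d-c)!)|_X\in H^{2(d-c)}(X,\Z)$ yields a top-degree integer $\equiv 0\pmod{2^{e+1}}$. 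Translated through adjunction, this becomes a mixed Chern-number/line-bundle constraint on $(Y(\C),\cO_A(\lambda)|_Y)$ whose $\F_2$-reduction is a twisted Stiefel--Whitney invariant of $(Y(\R),$ real reduction of the line bundle$)$. The delicate step is to construct $M\subset X(\R)$ realising a nonzero value of this twisted invariant while keeping $[j]\in MO_d^{\alg}(X(\R))$; a natural starting point is to build $M$ from a Lefschetz pencil $X\dashrightarrow\P^1_{\R}$ together with a carefully chosen real fibre.
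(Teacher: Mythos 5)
Your treatment of the case $c=d$ is correct and is essentially the paper's own argument (Lemma~\ref{lemji}): same variety from Proposition~\ref{constrHodge}, same use of Theorems~\ref{RT} and~\ref{divtop} to produce the Chern-number polynomial detecting $s_c/2^e$ modulo $2$, same Whitney embedding into a small ball for property~(i), and the same chain of congruences via Proposition~\ref{FL}, $c(N_{W/X})=g^*c(X)\cdot s(W)$, and the Conner--Floyd relation $\psi([W(\C)])=[W(\R)]$. (The paper represents $[M]$ by Milnor's explicit algebraic generators of $MO_*$ rather than invoking Nash--Tognoli, but this is immaterial.)

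The reduction from $d>c$ to $d=c$ is where your proposal has a genuine gap: you correctly observe that $X'\times A$ with $A$ abelian fails, but the replacement you sketch (a very general divisor of class divisible by $2^{e+1}$ in a $(c+d+1)$-dimensional abelian variety, detected by a ``twisted'' Stiefel--Whitney invariant) is left unconstructed at its crucial point --- you say yourself that producing $M$ with nonzero invariant while retaining $[j]\in MO_d^{\alg}(X(\R))$ is ``the delicate step,'' and no argument is given. The paper's route is much lighter: take $X:=X'\times\P^{d-c}_{\R}$ and $M:=M'\times\P^{d-c}(\R)$. The point you miss is that one should not try to run the Chern/Segre-number computation on $X$ itself (indeed $c(\P^{d-c}_{\R})\not\equiv 1$, and $[M]$ may well vanish in $MO_d$); instead one uses the $c=d$ case as a black box and transfers the obstruction to a slice. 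Concretely, for a general $x\in\P^{d-c}(\R)$ the intersection $Y':=Y\cap(X'\times\{x\})$ is smooth (resp.\ smooth along its real locus) by Bertini, and a cobordism between $M$ and $Y(\R)$ restricts, by Sard's theorem applied to the projection to $\P^{d-c}(\R)$, to a cobordism between the slices $M'$ and $Y'(\R)$, contradicting the $c=d$ case; property~(i) for the product is immediate from property~(i) for $(X',M')$. So the objection that led you to abandon the product construction applies to the wrong invariant: the paper never needs $[M]\neq 0$, only that the general real slice of $Y$ inherits the relevant smoothness and that slicing is compatible with cobordism.
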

 
 We first prove Theorem \ref{thji} in the particular case where $c=d$.
 
 \begin{lem}
 \label{lemji}
If $c=d$, Theorem \ref{thji} holds with any $X$ as in Proposition \ref{constrHodge}.
 \end{lem}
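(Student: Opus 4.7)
The strategy is to produce a non-trivial $\Z/2$-linear combination of Stiefel--Whitney numbers $\chi:MO_d\to\Z/2$ that vanishes on $[Y(\R)]$ for every $d$-dimensional closed subvariety $Y\subset X$ satisfying the relevant smoothness hypothesis, but does not vanish on a suitable algebraic bordism class $[M]$; combined with Whitney's embedding theorem and the nonemptiness of $X(\R)$, this yields the required $(X,M)$.

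Since $c=d$ and $\alpha(d+e)=2e$, a short dyadic computation shows $\alpha(d+e-1)\geq 2e-1>2(e-1)$, so Theorem~\ref{divtop} furnishes an integer linear combination of Chern numbers on $MU_{2d}$ agreeing with $s_d/2^e$ modulo~$2$. By Lemma~\ref{cw}, it descends through $\psi:MU_{2d}\to MO_d$ to a linear combination $\chi$ of Stiefel--Whitney numbers on $MO_d$ with $\chi\circ\psi\equiv s_d/2^e\pmod 2$. Theorem~\ref{RT}, applied with $e$ replaced by $e+1$ and using the hypothesis $\alpha(d+e)=2e$ (not $>2e$), shows that $s_d$ is not divisible by $2^{e+1}$, so $\chi$ is non-trivial. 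For a smooth closed subvariety $i:Y\hookrightarrow X$ of dimension~$d$, Proposition~\ref{FL}(i) gives $\deg([Y]^2)=\deg(c_d(N_{Y/X}))$; Proposition~\ref{constrHodge}(i) implies $c(N_{Y/X})\equiv s(T_Y)\pmod{2^{e+1}}$ and hence $\deg(c_d(N_{Y/X}))\equiv s_d([Y(\C)])\pmod{2^{e+1}}$; Proposition~\ref{constrHodge}(ii) forces the Betti class $[Y]$ to be a $\Z$-multiple of $\omega$, so $\deg([Y]^2)\equiv 0\pmod{2^{e+1}}$. Combining, $s_d([Y(\C)])\equiv 0\pmod{2^{e+1}}$, whence $\chi([Y(\R)])\equiv s_d([Y(\C)])/2^e\equiv 0\pmod 2$. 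For assertion~(iii) with $e=1$, apply the same argument with $Y$ replaced by a resolution $g:W\to Y$ that is an isomorphism above $Y(\R)$: then $g$ is an embedding near $g(\C)^{-1}(X(\R))=W(\R)$, so Proposition~\ref{FL}(ii) gives the analogous congruence modulo $4=2^{e+1}$, while $W(\R)\cong Y(\R)$ yields $[W(\R)]=[Y(\R)]\in MO_d$.

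To construct $M$: since $MU_*$ is generated as a ring by Milnor hypersurfaces and projective spaces, all defined over $\Q$, the group $MU_{2d}$ is spanned as an abelian group by classes $[W(\C)]$ for $W$ smooth projective over $\R$ of dimension~$d$. By linearity, the non-triviality of $s_d/2^e\pmod 2$ on $MU_{2d}$ forces some such $W_0$ to satisfy $\chi([W_0(\R)])=1$. Fix $x\in X(\R)$, available by Proposition~\ref{constrHodge}(iii), and, using Whitney's embedding theorem in a Euclidean chart of $X(\R)$ around~$x$, embed $M:=W_0(\R)$ as $j:M\hookrightarrow X(\R)$. Contracting the chart to $x$ exhibits $j$ as homotopic, hence bordant, to the composition $W_0(\R)\to\{x\}\hookrightarrow X(\R)$, which is $g(\R)$ for the constant morphism $g:W_0\to X$ factoring through~$x$; therefore $[j]=[g(\R)]\in MO^{\alg}_d(X(\R))$, proving~(i). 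The identities $\chi([M])=\chi([W_0(\R)])=1\neq 0=\chi([Y(\R)])$ then give $[M]\neq[Y(\R)]\in MO_d$, proving~(ii) and, when $e=1$, (iii). The delicate step is the non-triviality of $\chi$ on the image of $MO^{\alg}_d(X(\R))\to MO_d$: that the cobordism invariant produced by Theorem~\ref{divtop} is not killed by real algebraicity. The argument resolves this by combining Milnor's real generators for $MU_*$ with the identity $\psi([W(\C)])=[W(\R)]$ of Conner--Floyd.
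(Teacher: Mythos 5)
Your proof is correct and follows essentially the same route as the paper: Theorem~\ref{divtop} and Theorem~\ref{RT} produce the mod~$2$ Stiefel--Whitney invariant detecting $s_d/2^e$, Proposition~\ref{FL} together with Proposition~\ref{constrHodge}~(i)--(ii) kills it on real loci of (resolutions of) the subvarieties $Y$, and Whitney embedding plus Conner--Floyd give a class $[j]\in MO^{\alg}_d(X(\R))$ on which it is nonzero. The only (harmless) variation is that you realize $M$ directly as $W_0(\R)$ for a real form of an $MU_*$-generator, whereas the paper takes any representative of $\psi(y_0)$ and then invokes Milnor's real generators of $MO_*$.
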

 
 \begin{proof}
  Recall from \S\ref{swc} that if $I=(i_1,i_2,\dots)$ is a sequence of nonnegative integers, we set $|I|:=\sum_r ri_r$. By Theorem \ref{divtop}, there exists a degree $1$ homogeneous polynomial $P\in \Z[x_I]_{|I|=d}$ such that $s_d(y)\equiv 2^eP(c_I(y)) \pmod{2^{e+1}}$ for all $y\in MU_{2d}$. 
  
  Theorem \ref{RT} shows the existence of $y_0\in MU_{2d}$ such that $s_d(y_0)\not\equiv 0\pmod{2^{e+1}}$, hence such that $P(c_I(y_0))\not\equiv 0\pmod 2$. Letting $M$ be a compact $\ci$ manifold representing the $\psi(y_0)\in MO_d$, Lemma \ref{cw} shows that $P(w_I(M))\neq 0\in\Z/2$.
  
A theorem of Whitney \cite[Theorem 5]{Whitney}
%For Whitney, "proper" means  "proper onto its image", see [Whitney, Differentiable manifolds, p.649].
 asserts that any $d$-dimensional compact $\ci$ manifold may be embedded in $\R^{2d}$.
It follows that one may choose an embedding $j:M\hookrightarrow X(\R)$ of $M$ in a small ball of $X(\R)$. Since this small ball is contractible, $j$ is homotopic (hence cobordant) to a constant map $M\to X(\R)$. As~$M$ is cobordant to the real locus of a smooth projective variety over $\R$ (see \S\ref{cob}), one deduces that ${[j]\in MO^{\alg}_d(X(\R))}$, proving (i).
  
 Let $W\to Y$ be a desingularization of $Y$ which is an isomorphism above $Y(\R)$ and let $g:W\to X$ be the induced morphism. Proposition \ref{FL} shows, under the assumptions of either (ii) or (iii), that
 \begin{equation}
 \label{congruence}
 \deg([Y]^2)=\deg ((g_*[W])^2)\equiv\deg(c_d(N_{W/X}))\pmod {2^{e+1}}.
 \end{equation}
  
  Since the Betti cohomology class of $Y$ is a Hodge class, assertion (ii) of Proposition~\ref{constrHodge} shows that $\deg([Y]^2)\equiv 0 \pmod{2^{e+1}}$. The total Chern class of $N_{W/X}$ is $c(N_{W/X})=c(g^*T_X)\cdot c(T_W)^{-1}=g^*c(X)\cdot s(W)$, where $s(W)\in \CH^*(W)$ denotes the total Segre class of $W$. We deduce from assertion (iii) of Proposition \ref{constrHodge} that
$c_d(N_{W/X})\equiv s_d(W)\pmod{2^{e+1}}$. Together with (\ref{congruence}), these facts shows that $s_d(W)\equiv 0\pmod{2^{e+1}}$.
  
  Consequently, we have $s_d(W(\C))\equiv 0\pmod{2^{e+1}}$. By our choice of $P$, we deduce that $P(c_I(W(\C)))\equiv 0\pmod 2$. Conner and Floyd \cite[Theorem 22.4]{CF} have proven that $W(\C)$ and $W(\R)\times W(\R)$ are cobordant, and it follows that ${\psi([W(\C)])=[W(\R)]\in MO_d}$. \!Lemma \ref{cw} now shows that ${P(w_I(W(\R)))=0\in \Z/2}$, hence that $P(w_I(Y(\R)))=P(w_I(W(\R)))\neq P(w_I(M))$. Since Stiefel--Whitney numbers are cobordism invariants (see \S\ref{swc}), we have $[M]\neq [Y(\R)]\in MO_d$.
 \end{proof}
 
 The proof of Theorem \ref{thji} in general easily reduces to the above lemma.
 
 \begin{proof}[Proof of Theorem \ref{thji}]
Lemma \ref{lemji} produces a smooth projective variety~$X'$ of dimension $2c$ over $\R$ and a $c$-dimensional closed $\ci$ submanifold ${j':M'\hookrightarrow X'(\R)}$ satisfying properties (i)-(iii) of Theorem \ref{thji} (with $d$ replaced by $c$).
Define $X:=X'\times\P^{d-c}_{\R}$ and $M:=M'\times\P^{d-c}(\R)$, and consider the embedding $j:=(j',\Id):M\hookrightarrow X(\R)$.

Our choice of $X'$ and $M'$, shows the existence of a morphism $g':W'\to X'$ of smooth projective varieties over $\R$ such that $j'$ is cobordant to $g'(\R)$. Defining $W:=W'\times\P^{d-c}_{\R}$ and $g:=(g',\Id)$, we see that $j$ is cobordant to $g(\R)$, hence that $[j]\in MO^{\alg}_d(X(\R))$, which proves (i).

Suppose now that $i:Y\hookrightarrow X$ is as in the statement of Theorem \ref{thji} and satisfies the hypothesis of either (ii) or (iii). Let $x\in\P^{d-c}(\R)$ be a general point, and define $Y':=Y\cap (X'\times\{x\})$ and $i':Y'\hookrightarrow X'\times\{x\}\simeq X'$ to be the natural inclusion. Bertini's theorem ensures that $Y'$ is smooth in case (ii) and that $Y'$ is smooth along $Y'(\R)$ in case (iii). If $M$ were cobordant to $Y(\R)$, Sard's theorem would imply that $M'$ is cobordant to $Y'(\R)$. This contradicts our choice of $M'$ and proves (ii) and~(iii).
 \end{proof}
  
\begin{rems}
\label{remNoether}
(i) 
Assertions (i) and (iii) of Theorem \ref{thji} prove Theorem \ref{thC2}.

(ii) 
It is striking that the obstructions to $M$ being approximable by real loci of algebraic subvarieties of $X$ provided by Theorem \ref{thji} (ii)-(iii) involve cobordism theory, although $[j]\in MO^{\alg}_d(X(\R))$ by Theorem \ref{thji} (i). Loosely speaking, the map $j$ is cobordant to an algebraic map, but not to an algebraic embedding.

(iii)
Complex cobordism and Theorem \ref{divtop} are not needed to prove Theorem~\ref{thji} for $c=2$. One may use Noether's formula instead, as in Example~\ref{Noether}.

(iv) In the setting of Theorem \ref{thji}, one could moreover arrange that the inclusion $j:M\hookrightarrow X(\R)$ be approximable in the $\ci$ topology by the inclusion of the set of smooth real points of an algebraic subvariety $Z\subset X$ with compact set of smooth real points (which necessarily also has some singular real points if $e=1$, in view of Theorem \ref{thji} (iii)). To prove it when $c=d$, one can use linkage and general position arguments as in Sections \ref{linkage} and \ref{sectionapprox}. We do not give a detailed proof here. The general case reduces to the case $c=d$ as in the proof of Theorem \ref{thji}.
\end{rems}

%\begin{property*}[$A_{\weak}$]
%\end{property*}

% To state a last question, we must introduce yet another approximation property.
%\begin{flushright}
%$(A)\mkern9mu$\begin{minipage}[t]{.92\textwidth}
%For all neighbourhoods $\mathcal{U}\subset\ci(M,X(\R))$ of the inclusion, there exist ${\phi\in\cU}$ and a closed subvariety $Y\subset X$ with smooth locus $Y^{\sm}\subset Y$ such that $\phi(M)=Y^{\sm}(\R)$.
%\end{minipage}\end{flushright}
%\vspace*{2pt}
 %%déjà explicitement dans [Akbulut King transcendental submanifolds of R^n/RPn !!]
 %%where actually algebraic
%Property $(A)$ is weaker than $(A_{\R})$ and $(A_{\C})$, since it allows $Y$ to have real singular points away from $\phi(M)$, but it still implies $(C)$. 
%In \cite[Approximation Conjecture p.~20]{KK}, Kucharz and Kurdyka ask:
%\begin{quest}
%Are properties $(C)$ and $(A)$ always equivalent?
%\end{quest}

 The proof of Theorem \ref{projth} is a variant of the proof of Lemma \ref{lemji}. Since $c(\P^1_{\R}\times\P^{2^{k+1}-1}_{\R})\not\equiv 1\pmod 4$, the argument is slightly more complicated.
 
The relation $(\sum_rw_r(M))(\sum_r\ow_r(M))=1$ defines the \textit{normal Stiefel--Whitney classes} $\ow_r(M)\in H^r(M,\Z/2)$ of a compact $\ci$ manifold $M$. We also recall that if $i:Y\to X$ is a morphism of varieties over $\R$, we let $i(\R):Y(\R)\to X(\R)$ be the induced map between sets of real points.

 \begin{thm}
\label{projth}
Fix $k\geq 1$ and define $X:=\P^1_{\R}\times\P^{2^{k+1}-1}_{\R}$.
There exists a $2^k$\nobreakdash-dimen\-sional closed $\ci$ submanifold $j: M\hookrightarrow X(\R)$ such that $[j]\neq [i(\R)]\in MO_{2^k}(X(\R))$ for all $2^k$-dimensional closed subvarieties $i:Y\hookrightarrow X$ that are smooth along $Y(\R)$.
 \end{thm}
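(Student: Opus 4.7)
The plan is to adapt the proof of Lemma \ref{lemji}, the main new difficulty being that $c(X) \not\equiv 1 \pmod 4$, so contributions from the Chern class of $X$ no longer vanish in the double point formula modulo~$4$. First I would compute $c(X) \pmod 4$ explicitly: writing $h_1, h_2$ for the pullbacks of the hyperplane classes on $\P^1_{\R}$ and $\P^{2^{k+1}-1}_{\R}$, one has $c(X) = (1+2h_1)(1+h_2)^{2^{k+1}}$, and the formula $v_2(\binom{2^{k+1}}{r}) = (k+1) - v_2(r)$ for $0 < r < 2^{k+1}$ shows that the only binomial coefficient $\binom{2^{k+1}}{r}$ with $0 < r < 2^{k+1}$ that is $\equiv 2 \pmod 4$ occurs at $r = 2^k$. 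Hence $c(X) \equiv 1 + 2h_1 + 2h_2^{2^k} \pmod 4$.

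Set $d = 2^k$ and $e = 1$; since $\alpha(d+e) = \alpha(2^k+1) = 2 = 2e$, Theorem \ref{divtop} yields an integral linear combination $P$ of Chern numbers with $s_d(y) \equiv 2P(c_I(y)) \pmod 4$ for all $y \in MU_{2d}$. By Theorem \ref{RT} there exists $y_0 \in MU_{2d}$ with $s_d(y_0) \equiv 2 \pmod 4$, so $P(c_I(y_0))$ is odd; letting $M_0$ be a compact $d$-manifold representing $\psi(y_0) \in MO_d$, Lemma \ref{cw} gives that $P(w_I(M_0))$ is odd. Embed $M_0$ in a small contractible ball of $X(\R)$ to obtain $j: M \hookrightarrow X(\R)$.

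Now suppose for contradiction $[j] = [i(\R)]$ in $MO_d(X(\R))$ for some $d$-dimensional subvariety $i: Y \hookrightarrow X$ smooth along $Y(\R)$. Let $g: W \to Y$ be a resolution of singularities which is an isomorphism over $Y(\R)$, and set $f := i \circ g$. Writing the Hodge class as $[Y] = \alpha h_2^{2^k} + \beta h_1 h_2^{2^k - 1}$ for integers $\alpha, \beta$, one has $\deg([Y]^2) = 2\alpha\beta$ and $\deg(f^*h_2^{2^k}) = \beta$. Expanding $c(N_{W/X}) = f^*c(X) \cdot s(W)$ modulo $4$ using the formula for $c(X)$ above, Proposition \ref{FL}(ii) yields
\[ 2\alpha\beta \equiv \deg(s_d(W)) + 2\deg(f^*h_1 \cdot s_{d-1}(W)) + 2\beta \pmod 4. \]
Dividing by $2$ and applying Theorem \ref{divtop}, Lemma \ref{cw}, and the Conner--Floyd identity $\psi([W(\C)]) = [W(\R)] = [Y(\R)]$:
\[ P(w_I(Y(\R))) \equiv \alpha\beta + \beta + \deg(f^*h_1 \cdot s_{d-1}(W)) \pmod 2. \]

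To conclude, each term on the right-hand side must be interpreted as a cobordism invariant of $[i(\R)] \in MO_d(X(\R))$. For $\alpha$ and $\beta$ this is direct: letting $\sigma \in H^1(\P^1(\R), \Z/2)$ and $t \in H^1(\P^{2^{k+1}-1}(\R), \Z/2)$ be the canonical generators, the parities of $\alpha$ and $\beta$ equal $\langle i(\R)^*(\sigma t^{2^k - 1}), [Y(\R)]\rangle$ and $\langle i(\R)^*(t^{2^k}), [Y(\R)]\rangle$, respectively, because the transverse intersection of $Y$ with a real linear subvariety of complementary dimension is a $0$-dimensional scheme whose complex-conjugate pairs cancel modulo $2$. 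The main obstacle is to show that $\deg(f^*h_1 \cdot s_{d-1}(W)) \pmod 2$ is likewise a characteristic number of $[i(\R)] \in MO_d(X(\R))$; I expect this to follow from a relative version of the Conner--Floyd identification between $[W(\C)]$ and $[W(\R)]^2$ that keeps track of maps to $X$, expressing $s_{d-1}$ modulo $2$ via the normal Stiefel--Whitney classes $\ow_r$ of $W(\R)$ (hence the introduction of $\ow_r$ at the end of the excerpt). Granting this identification, since $j$ factors through a contractible ball, every characteristic number appearing on the right-hand side vanishes on $[j]$, hence on $[i(\R)] = [j]$. This forces $P(w_I(Y(\R))) \equiv 0 \pmod 2$, contradicting $P(w_I(Y(\R))) = P(w_I(M)) \equiv 1 \pmod 2$ by construction.
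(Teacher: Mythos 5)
Your construction of $M$, the use of Proposition \ref{FL}~(ii), the computation $c(X)\equiv 1+2h_1+2h_2^{2^k}\pmod 4$, and the reduction to the parity of $P(c_I(W(\C)))$ all match the paper's argument, and your handling of $\alpha$ and $\beta$ (they are even because $[j]=[i(\R)]$ forces $\cl_{\R}([Y])=i(\R)_*[Y(\R)]=j_*[M]=0$, so that $[Y]\in 2\CH^{2^k}(X)$) is equivalent to what the paper does. The one step you leave unproved — that $\deg(f^*h_1\cdot s_{2^k-1}(W))$ is even — is a genuine gap as written, and your guess about how to fill it points at the wrong tool: no ``relative Conner--Floyd'' identification is needed, and trying to set one up would be an unnecessary detour.

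What actually closes the gap is the compatibility of the Borel--Haefliger cycle class map with characteristic classes. First, for any zero-cycle $\xi$ on $W$ one has $\deg(\cl_{\R}(\xi))\equiv\deg(\xi)\pmod 2$, since complex points of a real zero-cycle contribute $2$ to the degree. Second, Borel and Haefliger proved that $\cl_{\R}(c(W))=w(W(\R))$, hence $\cl_{\R}(s(W))=\ow(W(\R))$; combined with the functoriality $\cl_{\R}(f^*h_1)=f(\R)^*\cl_{\R}(h_1)$, this gives
$$\deg(f^*h_1\cdot s_{2^k-1}(W))\equiv\langle f(\R)^*\cl_{\R}(h_1)\cupp\ow_{2^k-1}(W(\R)),[W(\R)]\rangle\pmod 2.$$
The right-hand side is a (normal) Stiefel--Whitney number of the map $f(\R)$, and such numbers are invariants of the class of $f(\R)$ in $MO_{2^k}(X(\R))$ by Conner--Floyd. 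Since $f(\R)$ is cobordant to $j$ (using $W(\R)\simeq Y(\R)$ and the assumption $[i(\R)]=[j]$) and $j^*$ kills $H^{1}(X(\R),\Z/2)$ because $j$ factors through a contractible ball, this number vanishes. With that substitution your displayed congruence yields $P(w_I(Y(\R)))\equiv 0\pmod 2$ and the contradiction you want; everything else in your write-up is sound.
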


\begin{proof}
Since $\alpha(2^{k}+1)=2$, Theorem \ref{divtop}, shows that there is a degree $1$ homogeneous polynomial $P\in \Z[x_I]_{|I|=2^k}$ with $s_{2^k}(y)\equiv 2P(c_I(y)) \pmod{4}$ for all $y\in MU_{2^{k+1}}$. By Theorem~\ref{RT}, one may find $y_0\in MU_{2^{k+1}}$ such that $s_{2^k}(y_0)\not\equiv 0\pmod{4}$, hence such that $P(c_I(y_0))\not\equiv 0\pmod 2$. Letting $M$ be a compact $\ci$ manifold representing $\psi(y_0)\in MO_{2^k}$, Lemma \ref{cw} shows that $P(w_I(M))\neq 0\in\Z/2$.
  By Whitney's theorem \cite[Theorem 5]{Whitney}, one may embed $j:M\hookrightarrow X(\R)$ in a small ball of~$X(\R)$.

Let $i:Y\hookrightarrow X$ be as in the statement of Theorem \ref{projth}. Assume for contradiction that $[j]=[i(\R)]\in MO_{2^k}(X(\R))$. 
Let $W\to Y$ be a desingularization of $Y$ which is an isomorphism above $Y(\R)$ and let $g:W\to X$ be the induced morphism. 
Proposition~\ref{FL} (ii) shows that $\deg([Y]^2)=\deg ((g_*[W])^2)\equiv\deg(c_{2^k}(N_{W/X}))\pmod {4}$, hence that
\begin{equation}
\label{congruence2}
 \deg([Y]^2)\equiv\sum_{r=0}^{2^k} \deg(g^*c_r(X)\cdot s_{2^k-r}(W))\pmod {4}.
\end{equation}

  Consider the Borel--Haefliger cycle class map $\cl_{\R}:\CH^*(X)\to H^*(X(\R),\Z/2)$ (\cite{BH}, see also \cite[\S 1.6.2]{BW1}). Since $[j]=[i(\R)]\in MO_{2^k}(X(\R))$, one has $\cl_{\R}([Y])=[M]=0\in H^{2^k\!\!}(X(\R),\Z/2)$. Set $H_1:=c_1(\cO_{\P^1_{\R}}(1))\in \CH^1(\P^1_{\R})$ and $H_2:=c_1(\cO_{\P_{\R}^{2^{k+1}-1}}(1))\in \CH^1(\P^{2^{k+1}-1}_{\R})$. As $\CH^{2^k\!\!}(X)$ is generated by $(H_2)^{2^k}$ and $H_1(H_2)^{2^k-1}$, we compute that the kernel of $\cl_{\R}:\CH^{2^k\!\!}(X)\to H^{2^k\!\!}(X(\R),\Z/2)$ is generated by $2(H_2)^{2^k}$ and $2H_1(H_2)^{2^k-1}$. As a consequence, $[Y]\in \CH^{2^k\!\!}(X)$ is a multiple of $2$, and hence $ \deg([Y]^2)$ is divisible by $4$.

The Euler exact sequences $0\to \cO_{\P^N_{\R}}\to  \cO_{\P^N_{\R}}(1)^{\oplus N+1}\to T_{\P^N_{\R}}\to 0$ and the Whitney sum formula yield $c(X)=(1+H_1)^2(1+H_2)^{2^k}\in CH^*(X)$. Since ${H_1^2=H_2^{2^k}=0}$, we deduce that $c(X)\equiv 1\pmod 2$. 
For $r\geq 1$, let $\gamma_r\in \CH^r(X)$ be such that $c_r(X)=2\gamma_r$. Since Borel and Haefliger have shown that $\cl_{\R}(c(W))=w(W(\R))$ (\cite[\S 5.18]{BH}, see also \cite[Proposition 3.5.1]{Krasnov1}),
%, and note that this proof is not affected by the mistake corrected in \cite{Krasnov2}), 
we have $\cl_{\R}(s(W))=\ow(W(\R))$. We deduce that, for $r\geq 1$,
\begin{alignat}{4}
\label{degmod2}
\deg(\cl_{\R}(g^*\gamma_r\cdot s_{2^k-r}(W)))&=\deg(g(\R)^*\cl_{\R}(\gamma_r)\cdot \ow_{2^k-r}(W(\R)))\nonumber\\
&=\deg(j^*\cl_{\R}(\gamma_r)\cdot\ow_{2^k-r}(M))\\
&=0\in\Z/2,\nonumber
\end{alignat}
where the first equality follows from the functorial properties of $\cl_{\R}$ (see \cite[\S 1.6.2]{BW1}), the second from the equality of the Stiefel--Whitney numbers of the cobordant maps $j$ and $g(\R)=i(\R)$ (see \cite[Theorem 17.3]{CF}), and the third holds since $j^*:H^r(X(\R),\Z/2)\to H^r(M,\Z/2)$ vanishes for $r\geq 1$ because the image of~$j$ is included in a small ball of $X(\R)$.

Equation (\ref{degmod2}) demonstrates that $\deg(g^*\gamma_r\cdot s_{2^k-r}(W))\in\Z$ is even, and hence that $\deg(g^*c_r(X)\cdot s_{2^k-r}(W))$ is divisible by $4$, for all $r\geq 1$. Plugging the congruences we have obtained into (\ref{congruence2}) shows that $\deg(s_{2^k}(W))\equiv 0\pmod 4$, hence that $s_{2^k}(W(\C))\equiv 0\pmod 4$. 
Our choice of $P$ implies that $P(c_I(W(\C)))\equiv 0\pmod 2$.

 By \cite[Theorem 22.4]{CF}, one has ${\psi([W(\C)])=[W(\R)]\in MO_{2^k}}$. Lemma \ref{cw} now shows that ${P(w_I(W(\R)))=0\in \Z/2}$, hence that $P(w_I(Y(\R)))\neq P(w_I(M))$. We deduce that ${[M]\neq [Y(\R)]\in MO_{2^k}}$ by cobordism invariance of Stiefel--Whitney numbers. A fortiori, $[j]\neq [i(\R)]\in MO_{2^k}(X(\R))$, which is a contradiction.
\end{proof}
  
 \begin{rems}
 \label{remP}
 (i)  Theorem \ref{projth} implies Theorem \ref{thP} by \cite[Proposition 4.4.4]{Wall}.

 (ii) Theorem \ref{projth} is false for $k=0$ by \cite[Theorem 12.4.11]{BCR}. The proof fails in this case because $\alpha(2^k+1)<2$ precisely for this value of $k$.
 
 (iii) The simplest particular case of Theorem \ref{projth} is the following. Embed $\P^2(\R)$ in $\R^4=\R\times\R^3$ and let $j:\P^2(\R)\hookrightarrow\P^1(\R)\times \P^3(\R)$ be the induced embedding.
 Then $j$ is not cobordant to the inclusion of the real locus of a closed subvariety of $\P^1_{\R}\times\P^3_{\R}$ which is smooth along its real locus. A fortiori, $j$ may not be isotoped to such a real locus. As in Remark \ref{remNoether} (iii), the use of Theorem \ref{divtop} may be replaced by Noether's formula in the proof of this particular case.

(iv) The conclusion of Theorem \ref{projth}, is not explained by a difference between the groups $MO_*^{\alg}(X(\R))$ and $MO_*(X(\R))$, as they coincide by \cite[Remark 3 p.~103]{BTvb} 
%\cite[Lemma 2.1]{AKIHES} 
or \cite[Corollary~1~p.~314]{IS}.
 \end{rems}

\section{Algebraic approximation and algebraic homology}
\label{final}
  
In this section, we fix a smooth projective variety $X$ of dimension $c+d$ over~$\R$, and a closed $d$\nobreakdash-dimensional~$\ci$ submanifold $j:M\hookrightarrow X(\R)$. Recall from \S\ref{approximation} the definition of the approximation property $(A)$ and of its necessary condition~$(B)$ based on cobordism. We now study variants of these properties, our goal being Theorem \ref{thmfinal}.

\subsection{A stronger approximation property}

It is natural to consider the following strenghtening of the approximation property $(A)$ considered in \S\ref{approximation}:
\begin{property*}[$A'$]
For all neighbourhoods $\mathcal{U}\subset\ci(M,X(\R))$ of the inclusion, there exist ${\phi\in\cU}$ and a smooth closed subvariety $Y\subset X$ such that $\phi(M)=Y(\R)$.
\end{property*}

The following two theorems are analogues of Theorem \ref{thC1} and \ref{thC2} for the property~$(A')$. They are consequences of Theorems \ref{approxth} and \ref{thji} respectively.

\begin{thm}
\label{thC1bis}
Properties $(A')$ and $(B)$ are equivalent if $d<c$ and $d\leq 3$.
\end{thm}

 \begin{thm}
 \label{thC2bis}
Let $d\geq c$ and $e\geq 1$ be such that $\alpha(c+e)=2e$. Then there exist~$X$ and $M$ such that $(A')$ fails but $(B)$ holds.
 \end{thm}

\begin{rems}
(i)
We do not know if it is possible to get rid of the hypothesis that $d\leq 3$ in Theorem \ref{thC1bis}.

(ii)
Examples where $(B)$ (and even $(A)$) holds but $(A')$ fails had already been obtained by Akbulut and King \cite[Theorem 4]{AKtransc}, and refined by Kucharz \cite[Theorem 1.1]{Kuctransc}.
  %KucharzvH not really different, Kucharz Simanca studies codimension 2.
 Their examples work for all $(c,d)$ with $c\geq 2$ and $d\geq c+2$. 
%The obstructions used in Theorem~\ref{thC2}~(i) work on more varieties, but do not seem to be applicable to projective spaces in particular. 
The range of pairs $(c,d)$ that we reach in Theorem \ref{thC2bis} is different. 
%In particular, we obtain the first examples where $(B)$ holds but $(A_{\C})$ fails with $c=d$, which might be optimal
 %Unfortunately, we do not get any new examples where $(A)$ holds but $(A_{\C})$ fails.
\end{rems}

\subsection{Homology and cobordism obstructions}

Define $H_d^{\alg}(X(\R),\Z/2)$ to be the image of the Borel--Haefliger cycle class map $\cl_{\R}:\CH_d(X)\to H_d(X(\R,\Z/2)$.
Consider the following property.
\begin{property*}[H]
One has $j_*[M]\in H_d^{\alg}(X(\R),\Z/2)$.
\end{property*}
Property $(B)$ implies $(H)$ since algebraic homology classes are preserved by push-forwards (see for instance \cite[\S 1.6.2]{BW1}).
%BCR renvoie à Borel-Haefliger où on ne trouve pas clairement l'énoncé. Ce n'est pas chez Van Hamel non plus. Ni Krasnov.
 It follows that $(H)$ is an obstruction to the approximation properties $(A)$ and $(A')$, that is weaker than $(B)$. In fact, the cobordism obstruction $(B)$ was first used by Bochnak and Kucharz \cite[Corollary~1.3]{BKsub} to give examples where $(A)$ fails but $(H)$ holds, when $d\geq 3$ and~$c\geq 2$.
%We stress that although $(C)$ really is finer than $(H)$, checking whether it holds or not is not a more difficult problem (using \cite[Corollary 1 p.~314]{IS}).

\begin{lem}
\label{HB}
If $d\leq 2$, then properties $(B)$ and $(H)$ are equivalent.
\end{lem}

\begin{proof}
 Since $MO_1=0$, an isomorphism
$$(H_{d-2}(X(\R),\Z/2)\otimes MO_2)\oplus H_d(X(\R),\Z/2)\isoto MO_d(X(\R))$$
is constructed in \cite[Theorem 17.2]{CF}. It restricts to an isomorphism
$$(H_{d-2}(X(\R),\Z/2)\otimes MO_2)\oplus H_d^{\alg}(X(\R),\Z/2)\isoto MO_d^{\alg}(X(\R))$$
by a theorem of Ischebeck and Sch\"ulting \cite[Corollary 1 p.~314]{IS}, in view of the equality $H_0^{\alg}(X(\R),\Z/2)=H_0(X(\R),\Z/2)$. We deduce that the two conditions $j_*[M]\in H_d^{\alg}(X(\R),\Z/2)$ and  $[j:M\hookrightarrow X(\R)]\in MO_d^{\alg}(X(\R))$ are equivalent. 
\end{proof}

\subsection{Hypersurfaces} 
  
The following proposition is a well-known improvement of \cite[Theorem 12.4.11]{BCR}, which goes back to the work of Benedetti and Tognoli \cite[Proposition 1 p.~227]{BTapprox}
(see also \cite[Theorem A]{Akak}).
 %AKbook quotes [BKS, On algebraicity of global...,Proposition 3.1] who quote [BT, cex, Proof of Theorem 4.1] (also quoted by BCR), who in turn quote [BT, On real vector bundles] which is not quite a correct reference as the algebraic structure of the ambient variety is not fixed there in genral.  [BT, Approx, Prop 1] on the other hand is exactly what we want.
  
\begin{prop}
\label{hyp}
If $c=1$, then properties $(A')$ and $(H)$ are equivalent. 
\end{prop}
  
  \begin{proof}
Assume that $(H)$ holds.
Let $\mathcal{U}\subset\ci(M,X(\R))$ be a neighbourhood of the inclusion.
  By \cite[Theorem 12.4.11]{BCR}, there exist $\psi\in\cU$, an open neighbourhood~$U$ of $X(\R)$ in $X$ and a smooth closed hypersurface $Z\subset U$ with $\psi(M)=Z(\R)$. Let $\overline{Z}\subset X$ be the Zariski closure of $Z$. Since $X$ is smooth, there exist a line bundle~$\cL$ on $X$ and a section $s\in H^0(X,\cL)$ with $\oZ=\{s=0\}$.
  
    Fix a very ample line bundle $\cO_X(1)$ on $X$. Let $(u_1,\dots,u_N)$ be a basis of $H^0(X,\cO_X(1))$. The section $v:=\sum_{m=1}^N u_m^2\in H^0(X,\cO_X(2))$ 
vanishes nowhere on~$X(\R)$. Choose $l\gg0$ with $\cM:=\cL(2l)$ very ample, let $t\in H^0(X,\cM)$ be a general small deformation of $sv^l$, and set $Y:=\{t=0\}$, which is smooth by Bertini.
    
    That $Y$ has the required properties follows from \cite[\S 20]{AR}. More precisely, the proofs of \cite[Lemmas 20.3 and 20.4]{AR} applied with $X=X(\R)$, $Y=\cM(\R)$, $W\subset Y$ the zero section, $r\geq 1$, and $\mathcal{A}=\mathcal{C}^{r+1}(X(\R),\cM(\R))$ show that if $t$ is close to $sv^l$, then the inclusions ${Z(\R)\subset X(\R)}$ and $Y(\R)\subset X(\R)$ are isotopic, by an isotopy which is $\ci$ because so are $t$ and $sv^l$, and small in the $\ci$ topology (see the use of the implicit section theorem in the proof of \cite[Lemma 20.3]{AR}).
\end{proof}

\subsection{A question of Bochnak and Kucharz} 
   
In \cite[pp.~685-686]{BKsub}, Bochnak and Kucharz ask for which values of $c$ and $d$ are $(A)$ and $(H)$ equivalent. We obtain a full answer to that question, and disprove the expectation raised in \cite[p.~686]{BKsub} that $(A)$ and $(H)$ are not equivalent for $d=2$ and~$c\geq 3$.

 \begin{thm}
\label{thmfinal}
 Properties $(H)$, $(B)$, $(A)$ and $(A')$ are all equivalent in the following cases: if $c\leq 1$, if $d\leq 1$, or if $d=2$ and $c\geq 3$.

For all other values of $c$ and $d$, there exist $X$ and $M$ satisfying $(H)$ but not $(A)$. 
\end{thm}

\begin{proof}
The theorem is trivial if $c=0$ or $d=0$. It follows from Proposition \ref{hyp} if~$c=1$. The cases with $d\geq 3$ and $c\geq 2$ are covered by Bochnak and Kucharz in \cite[Corollary 1.3]{BKsub}.
 If $d\leq 2$, then $(H)$ is equivalent to $(B)$ by Lemma \ref{HB}. The cases where $d=1$ and $c\geq 2$, or where $d=2$ and $c\geq 3$, now follow from Theorem~\ref{thC1bis}.
Finally, when $c=d=2$, one may apply Theorem \ref{thC2bis}
\end{proof}

 \bibliographystyle{myamsalpha}
\bibliography{approx}

\end{document}